\newtheorem{thm}{Theorem}[section]
\newtheorem{lemma}[thm]{Lemma}
\newtheorem{prop}[thm]{Proposition}
\newtheorem{conj}[thm]{Conjecture}
\newtheorem {lem} [thm]    {Lemma}
\newtheorem {prp}[thm]  {Proposition}
\theoremstyle{definition}
\newcounter{AbcT}
\theoremstyle{definition}
\theoremstyle{remark}
\newtheorem{remark}[thm]{Remark}
\numberwithin{equation}{section}
\newcommand{\mmod}[1]{\,\,\text{mod}\,\,#1}
\def\alp{{\alpha}} 
\def\bet{{\beta}}  
\def\del{{\delta}} 
\def\tet{{\theta}}  
\def\kap{{\kappa}}
\def\eps{\varepsilon}
\def\le{\leqslant}
\def\leq{\leqslant} \def\ge{\geqslant}
\def\geq{\geqslant}
\def\epsilon{\varepsilon}
\def\d{{\,{\rm d}}}
\def \bC {\mathbb C}
\def \bN {\mathbb N}
\def \bQ {\mathbb Q}
\def \bR {\mathbb R}
\def \bZ {\mathbb Z}
\def \bZ {\mathbb Z}
\def \R {\mathbb R}
\def \Z {\mathbb Z}
\def \cJ {\mathcal J}
\def \cN {\mathcal N}
\def \cX {\mathcal X}
\def \Im {\mathrm{Im}}
\def \dim {\mathrm{dim}}
\def \supp {\mathrm{supp}}
\def \dimH {\mathrm{dim}_{\mathrm{H}}}
\def \Lip {{\mathrm{Lip}}}
\newcommand{\wt}{\widetilde}
\newcommand{\one}{{\mathds{1}}}
\DeclareMathOperator{\dist}{dist}
\renewcommand{\Im}{\operatorname{Im}}
\begin{document}
\title[Rationals in Cantor sets]{Counting rationals and diophantine approximation in missing-digit Cantor sets}
\subjclass[2020]{11J83 (primary); 28A78, 28A80, 42A16 (secondary)}
\keywords{diophantine approximation, fractals, Fourier series}
\author{Sam Chow \and P\'eter P. Varj\'u \and Han Yu}

\address{SC, Mathematics Institute, Zeeman Building, University of Warwick, Coventry CV4 7AL, UK}
\email{Sam.Chow@warwick.ac.uk}

\address{PV, Centre for Mathematical Sciences, Wilberforce Road, Cambridge CB3 0WA, UK}
\email{pv270@dpmms.cam.ac.uk}

\address{HY, College of Mathematics and Statistics, Center of Mathematics, Chongqing University, Chongqing, 401331, China}
\email{han.yu.2@cqu.edu.cn}

\thanks{SC was supported by EPSRC Fellowship Grant EP/S00226X/2, and by the Swedish Research Council under grant no. 2016-06596.
PV and HY have received funding from the European Research Council (ERC) under the European Union’s Horizon 2020 research and innovation programme (grant agreement No. 803711). PV has also received funding from the Royal Society. HY has also received funding from the University of Cambridge and Corpus Christi College, Cambridge, as well as from the Leverhulme Trust (ECF-2023-186).}

\begin{abstract}
We establish a new upper bound for the number of rationals up to a given height in a missing-digit set, making progress towards a conjecture of Broderick, Fishman, and Reich. 
This enables us to make novel progress towards another conjecture of those authors about the corresponding intrinsic diophantine approximation problem.
Moreover, we make further progress towards conjectures of Bugeaud--Durand and Levesley--Salp--Velani on the distribution of diophantine exponents in missing-digit sets.

A key tool in our study is Fourier $\ell^1$ dimension introduced by the last named
author in [H. Yu, Rational points near self-similar sets, arXiv:2101.05910].
An important technical contribution of the paper is a method to compute this quantity.
\end{abstract}

\maketitle

\section{Introduction}
In this paper, we study the distribution of rational numbers around missing-digit sets.
Let $b \ge 2$ be an integer, and let $D$ be a subset of $\{ 0,1,\dots,b-1\}$ with $|D| \ge 2$. Let $K_{b,D}$ be the set of real numbers in $[0,1]$ having at least one base $b$ expansion with digits only from the set $D$. We call such a set $K_{b,D}$ a \emph{missing-digit set} with base $b$ and digit set $D.$ For example, the set $K_{3,\{0,2\}}$ is the middle-third Cantor set.

Furthermore, let $p = (p_0,\dots,p_{b-1})$ be a probability vector. Let 
$D_1, D_2, \ldots$ be an i.i.d sequence where $D_i$ takes the value $j$ with probability $p_j$. The \emph{missing-digit measure} $\nu_p$ associated to $p$ is the distribution of the random sum 
\[
\sum_{i\geq 1} D_i b^{-i}.
\]
When $\supp(p) := \{j\in\{0,\dots,b-1\}: p_j\neq 0\} = D$, the measure $\nu_p$ is supported on $K_{b,D}$.
The case when $p$ is the uniform probability vector on $D$, that is, it gives weight $\# D^{-1}$ to each element of $D$, is of special importance to us.
In this case, we call the associated measure the \emph{natural measure} on $K_{b,D}$
and denote it by $\nu_{b,D}$.

Our main results are Theorems \ref{CountingThm}, \ref{IntrinsicThm} and \ref{BDtype}. Further results can be found in \S \ref{sec: fourier l1}.

\subsection*{Notation}
We use the Vinogradov notations $\ll,\gg,\asymp$, as well as the Bachmann--Landau notation $O(\cdot)$.
Let $f,g$ be complex-valued functions. We write $f \ll g$ or $f=O(g)$ if $|f|\leq C|g|$ pointwise, for some constant $C>0$. We write $f\asymp g$ if $f\ll g$ and $g\ll f$.

We denote by $\dimH(B)$ the Hausdorff dimension of a Borel set $B$.

\subsection{Counting rational numbers in missing-digit sets}
Let \mbox{$K\subset\mathbb{R}$} be compact. For each integer $T\geq 1,$ we define
\[
\mathcal{N}_K(T)=\#\{x\in K: qx\in\mathbb{Z} \text{ for some } 1\leq q\leq T\}.
\]
The following is a natural extension of a conjecture due to Broderick, Fishman and Reich \cite{BFR2011}.

\begin{conj}
\label{conj: missing digits Serre}
Let $K$ be a missing-digit set, and let 
$\kappa = \dimH(K).$ 
Then, for each $\varepsilon>0$, we have
\[
\cN_K(T) \ll T^{\kappa+\varepsilon}.
\]
\end{conj}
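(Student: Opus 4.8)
The plan is to convert the counting problem into an arithmetic statement about base-$b$ digit strings and then attack it with Fourier analysis on $\Z/m\Z$.

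The first step is a reduction to denominators coprime to $b$. If $q=b^aq'$ with $\gcd(q',b)=1$, the base-$b$ expansion of $p/q$ consists of an $a$-digit pre-period followed by a purely periodic block of minimal period $\ord_{q'}(b)$, and membership in $K_{b,D}$ forces every one of these digits to lie in $D$. The $a$ pre-period digits may be chosen in at most $|D|^a=b^{\kappa a}$ ways, and this is absorbed by the gain from the reduced bound $q'\le Tb^{-a}$, since $\sum_{a\ge 0}b^{\kappa a}(Tb^{-a})^{\kappa+\eps}\ll T^{\kappa+\eps}$. So it suffices to bound $\cN'_K(T):=\#\{p/q\in K_{b,D}:\gcd(q,b)=1,\ 1\le q\le T\}$.

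Second, a rational with purely periodic base-$b$ expansion of minimal period $d$ is exactly $w/(b^d-1)$ for a primitive word $w\in D^d$ (read in base $b$; primitive meaning not a repetition of a shorter word), with reduced denominator $(b^d-1)/\gcd(w,b^d-1)$, which is $\le T$ iff $\gcd(w,b^d-1)\ge(b^d-1)/T$. Splitting according to whether $b^d-1\le T$,
\[
\cN'_K(T)\ \le\ \sum_{b^d-1\le T}|D|^d\ +\ \sum_{b^d-1>T}\ \sum_{\substack{m\mid b^d-1\\ m\ge(b^d-1)/T}}\#\{w\in D^d:\ m\mid w\}.
\]
The first sum is a geometric series of size $\asymp T^{\log_b|D|}=T^\kappa$; these are essentially the $b$-adic and bounded-denominator rationals, so the exponent is already sharp. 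For the second sum I would expand the divisibility condition as a linear character sum: with $S(\theta)=\sum_{c\in D}e^{2\pi ic\theta}$ and $b^d\equiv1\pmod m$,
\[
\#\{w\in D^d:\ m\mid w\}\ =\ \frac1m\sum_{t=0}^{m-1}\prod_{i=0}^{d-1}S\!\Bigl(\frac{tb^i}{m}\Bigr)\ =\ \frac{|D|^d}{m}\ +\ \frac1m\sum_{t=1}^{m-1}\prod_{i=0}^{d-1}S\!\Bigl(\frac{tb^i}{m}\Bigr).
\]
Summed over admissible $d$ and $m$ (at most $\tau(b^d-1)\ll_\eps b^{\eps d}$ choices of $m$ for each $d$, where $\tau$ is the divisor function), the main terms $|D|^d/m$ contribute $\ll_\eps T\sum_{d>\log_b T}b^{(\kappa+\eps-1)d}\ll T^{\kappa+\eps}$, using $\kappa<1$.

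The third step, which is the main obstacle, is to control the error terms $\frac1m\sum_{t\ne0}\prod_iS(tb^i/m)$. Since $|S(tb^i/m)|$ depends only on the orbit of $t$ under multiplication by $b$ modulo $m$, one has $\prod_{i=0}^{d-1}|S(tb^i/m)|=|P_m(t)|^{d/e_m}$ with $e_m=\ord_m(b)$ and $P_m(t)=\prod_{i=0}^{e_m-1}S(tb^i/m)$; here $|P_m(t)|<|D|^{e_m}$ for $t\ne0$ but \emph{not} uniformly in $m$. Everything thus reduces to how large $\frac1m\sum_{t\ne0}|P_m(t)|/|D|^{e_m}$ can be — equivalently, the $\ell^1$-size, along the cyclic subgroup $\langle b\rangle\le(\Z/m\Z)^\times$, of the Fourier transform of the $e_m$-th level approximation to $\nu_{b,D}$. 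This uniform-in-$m$ $\ell^1$ Fourier behaviour of missing-digit measures is exactly what the Fourier $\ell^1$ dimension of $\nu_{b,D}$ encodes: if that dimension equals $\kappa$, the error terms are dominated by the main terms and the conjecture follows; in general one only knows it is $\le\kappa$, which is why this route gives Theorem \ref{CountingThm} rather than the full conjecture. The hard part, then, is not the arithmetic reduction but sharp $\ell^1$ Fourier estimates for missing-digit measures — quantifying how badly the dilates $tb^i/m$ can cluster near the integers for a single frequency $t$ — and it is here that the paper's method for computing the Fourier $\ell^1$ dimension is needed.
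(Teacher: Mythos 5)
You are trying to prove Conjecture \ref{conj: missing digits Serre}, which the paper states as an open conjecture and does not prove: its actual results in this direction are the much weaker power saving $\cN_K(T)\ll T^{2\kappa-\varrho}$ of Theorem \ref{CountingThm}, obtained via Theorem \ref{thm: l1 to main} (Parseval applied to the measure of the sets $A(q,\delta)=\{x:\|qx\|\le\delta\}$, plus a separation argument for rationals in dyadic ranges of $q$), not via an exact arithmetic classification of periodic digit strings. So there is no paper proof to match, and your proposal is not a proof either: as you yourself concede in the last step, everything hinges on the error terms $\frac1m\sum_{t\ne 0}\prod_i S(tb^i/m)$ being dominated by the main terms, and that estimate is precisely the open content of the conjecture.

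The gap is not merely that the estimate is unproved; the specific mechanism you propose cannot work. Bounding the error terms by absolute values is an $\ell^1$ bound on the Fourier coefficients of (the level-$d$ approximation of) $\nu_{b,D}$ along the orbit of $t$ under multiplication by $b$ modulo $m$, and to beat the main term $|D|^d/m$ at the $T^{\kappa+\eps}$ level you would need this $\ell^1$ average to decay with exponent $\kappa$, i.e.\ effectively $\hat\kappa_1(\nu_{b,D})=\kappa$. But $\hat\kappa_1(\nu)\le\hat\kappa_2(\nu)=\dimH(\nu)=\kappa$ with strict inequality in general: for the middle-third Cantor set (the original case of the conjecture) the paper's numerics show $\hat\kappa_1(\nu)<1/2<\log 2/\log 3=\kappa$, and Theorem \ref{thm: l1 bound for special arrangement} only gives $\kappa$ minus an explicit deficit. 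So the triangle-inequality route saturates well short of the conjectured exponent; genuine cancellation among the terms $\prod_i S(tb^i/m)$, or some entirely different input, would be required. Two further points need care even for the partial result your sketch would yield: the relevant sample points $t/m$ with $m\mid b^d-1$ are not the grids $x+i/b^L$ controlled by Theorem \ref{th:bounds}, so identifying your quantity with $\hat\kappa_1$ requires an argument (the multiplicative orbits $\{tb^i \bmod m\}$ can be short, of length $e_m=\ord_m(b)$, which is why no uniform-in-$m$ pointwise bound $|P_m(t)|\le (1-c)|D|^{e_m}$ is available); and membership in $K_{b,D}$ only requires \emph{some} admissible expansion, so terminating/ambiguous expansions must be tracked in your coprimality reduction, though that is a minor issue compared with the missing Fourier estimate.
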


In \cite{BFR2011}, this conjecture was stated only for $K$ being the middle third Cantor set. See also \cite[Conjecture 1 and Proposition 3.2]{RSTW2020}. Following \cite[Proposition 3.3]{RSTW2020} and \cite[Theorem 4.1]{Sch2020}, it can be shown that
\begin{equation}
\label{LowerBound}
T^{\kappa} \log T\ll\cN_K(T) \ll T^{2\kappa}.\end{equation}
To our knowledge, these are the strongest known bounds on this counting function, prior to this work.

One can view Conjecture \ref{conj: missing digits Serre} as an analogue of Serre's \emph{dimension growth conjecture}, which has been proved by Salberger, see \cite{CCDN2020}. This asserts that an integral projective variety $X$ of degree $d \ge 2$ over $\bQ$ has $O_{X,\eps}(T^{\dim(X)+\eps})$ rational points of height at most $T$. 

\bigskip

Our first result is a sharper upper bound for $\mathcal{N}_K(T)$ for a class of missing-digit sets, wherein one digit is missing.

\begin{thm}[Main Theorem 1] \label{CountingThm} Let $K=K_{b,D}$ be such that 
\begin{enumerate}[(i)]
\item $b\geq 5$ and $\#D=b-1$, or
\item $b=4$ and $D = \{0,1,2\}$ or $\{1,2,3\}$. 
\end{enumerate}
Then there exists an effectively-computable constant $\varrho > 0$ such that
\begin{equation} \label{PowerSaving}
\cN_{K}(T) \ll T^{2 \kap - \varrho}.
\end{equation}
\end{thm}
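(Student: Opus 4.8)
The plan is to count rationals $a/q$ with $q \le T$ that lie in $K = K_{b,D}$ by stratifying according to the size of $q$ relative to powers of $b$. Write $q = b^j q'$ with $b \nmid q'$; for each scale $j$, the rationals $a/q$ in $K$ correspond to rationals in the self-similar copies of $K$ at level $j$, and one needs a uniform count for rationals of denominator $\asymp Q$ near a single missing-digit set. The natural approach is a Fourier-analytic one: the indicator-type quantity $\cN_K(T)$ can be bounded by integrating a suitable Fej\'er-type kernel against the natural measure $\nu_{b,D}$ (or a mollification of it), so that the count is controlled by sums of the form $\sum_{q \le T} \sum_{a} \widehat{\nu_{b,D}}(\text{something} \cdot q)$. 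Here the key structural fact is that $\widehat{\nu_{b,D}}$ factors as an infinite product $\prod_i \phi(b^{-i}\xi)$ where $\phi$ is the Fourier transform of the single-digit distribution, and the arithmetic of $b$ versus $q$ governs how many factors can be simultaneously small.

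First I would set up the dyadic/$b$-adic decomposition: split $\{1 \le q \le T\}$ according to $j = v_b(q)$ and according to the dyadic range of $q' = q/b^j$, reducing everything to a bound on $\cN$ restricted to denominators coprime to $b$ in a dyadic window, composed with the self-similar scaling that contributes the trivial factor $b^{j\kappa}$. Second, for the coprime-to-$b$ case I would invoke the Fourier $\ell^1$ dimension machinery advertised in the abstract and in \S\ref{sec: fourier l1}: the point is that $\cN_K(T) \ll T^{2\kappa - \varrho}$ is equivalent, up to the standard heuristics, to the Fourier $\ell^1$ dimension of $\nu_{b,D}$ being strictly larger than the ``trivial'' value, and the hypotheses (i), (ii) (one missing digit, $b \ge 5$, or $b=4$ with those specific digit sets) are exactly what is needed to certify this strict inequality by an explicit computation of the relevant quantity. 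Third, I would feed the resulting non-trivial Fourier decay/$\ell^1$ estimate back through the counting inequality, summing the geometric series over scales $j$ and over dyadic blocks, to obtain the claimed power saving with an effectively-computable $\varrho$ (effectiveness coming from the explicit nature of the Fourier $\ell^1$ dimension computation).

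The main obstacle, and the technical heart of the paper, will be step two: establishing that the Fourier $\ell^1$ dimension of $\nu_{b,D}$ exceeds the threshold in precisely these cases. This requires genuinely understanding cancellation in products $\prod_i \phi(b^{-i}\xi)$ along arithmetic progressions of frequencies, i.e.\ controlling how often $\|b^{-i}\xi\|$ can avoid the zero set of $|\phi|$ for many consecutive $i$ when $\xi$ ranges over integers (or rationals with denominator coprime to $b$). When one digit is missing, $\phi$ is a normalized sum of $b-1$ roots of unity, whose modulus is bounded away from $1$ except near $0$, and the combinatorics of which residues $\xi \bmod b^k$ keep all the partial products large is what must be shown to be sparse; the restrictions $b \ge 5$ and the special treatment of $b=4$ are presumably where this sparsity can actually be proven. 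I expect the proof to proceed by isolating a ``bad set'' of frequencies on which the product is not small, showing it has small counting measure in each scale via a transfer-operator or substitution-dynamics argument on digit strings, and then a union bound. The passage from $b=3$ (excluded) to $b \ge 4,5$ is the delicate line, so I would expect a careful case analysis there, possibly with computer assistance for the small bases.
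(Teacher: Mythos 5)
Your overall architecture does match the paper's: Theorem \ref{CountingThm} is obtained by combining a general counting criterion in terms of Fourier $\ell^1$ dimension (Theorem \ref{thm: l1 to main}) with the lower bound $\hat\kappa_1(\nu_{b,D})>1/2$ for the stated bases and digit sets (Proposition \ref{pr:kap1>12}). However, both halves are left at the level of intention in your write-up, and each contains a genuine gap. For the counting half, the bound $\cN_K(T)\ll T^{2\kap-\varrho}$ is \emph{not} ``equivalent up to standard heuristics'' to an $\ell^1$ statement, and integrating a Fej\'er-type kernel against $\nu$ only bounds the $\nu$-measure of the set $A(q,\del)=\{x:\|qx\|\le\del\}$, not the number of rationals. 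To convert measure into a count one needs two further ingredients you never mention: the separation $|a_1/q_1-a_2/q_2|\ge 1/(q_1q_2)$ for distinct rationals with $q_1,q_2\asymp Q$, and the AD-regular lower bound $\nu(B_{\del/Q}(p/q))\gg(\del/Q)^{\kap}$, valid because $p/q\in K=\supp(\nu)$; choosing $\del=Q^{-v/(1-v)}$ with $1/2<v<\hat\kappa_1(\nu)$ then yields $\cN_K^*(2Q)\ll Q^{2\kap-(1-\kap)(2v-1)/(1-v)}$ (this is the content of Lemma \ref{lma: intrinsic counting} and the lemma preceding it). In particular the precise threshold $\hat\kappa_1(\nu)>1/2$ is what makes the exponent beat $2\kap$, and your sketch never identifies it. Your preliminary stratification of $q$ by the exact power of $b$ dividing it, reducing to denominators coprime to $b$, is harmless but unnecessary: the congruence condition $\xi\equiv 0\ (\mathrm{mod}\ q)$ arising from Parseval is handled for all $q$ at once by a divisor-function estimate, so the coprimality of $q$ and $b$ plays no role in the paper's argument, and in your version you would still owe a uniform bound in the coprime case.

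The second and larger gap is the heart of the theorem: actually proving $\hat\kappa_1(\nu_{b,D})>1/2$ for \emph{every} $b\ge5$ with one digit removed and for the two $b=4$ cases. You correctly anticipate a transfer-operator-type mechanism and computer assistance for small bases, but you supply no criterion that could be verified. The paper's route is Theorem \ref{th:bounds}: $\hat\kappa_1(\nu)$ is bounded below by $-\log\bigl(\max_x b^{-L}\sum_{i=0}^{b^L-1}S_L(x+i/b^L)\bigr)/\log b^L$, proved by a short induction over scales; an explicit exponential-sum estimate (Lemma \ref{lm:expsum}) then gives $\hat\kappa_1(\nu_{b,D})\ge \log(b-1)/\log b-\log(5+\log(2b)+2/b)/\log b$, which exceeds $1/2$ for all $b\ge 111$, and rigorous interval-arithmetic computations (with a Lipschitz bound controlling $F_L$ between sample points, and the symmetry $a\mapsto b-1-a$) settle the finitely many cases $4\le b\le 111$. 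Your proposal says nothing about how the infinitely many bases $b\ge5$ would be handled uniformly, nor what finite, checkable inequality the computer would verify for small $b$; without these, the decisive estimates behind Theorem \ref{CountingThm} are missing rather than merely deferred.
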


This has implications for \emph{intrinsic diophantine approximation}, as we now explain. Let $K$ be a closed subset of $\bR$. For $\alp \ge 1$, denote by $W_K(\alp)$ the set of $x \in \bR$ for which
\[
|x-p/q| < q^{-\alp}, \qquad p/q \in K
\]
has infinitely many solutions $(p,q) \in \bZ \times \bN$. The set of \emph{intrinsically very well approximable numbers} is
\[
\qquad \mathbf{VWA}_K := \bigcup_{\alp > 1} W_K(\alp).
\]

\begin{conj}
[Broderick, Fishman and Reich \cite{BFR2011}] \label{IntrinsicVWAconjecture} Let $K$ be a missing-digit set with natural measure $\nu$. 
Then
\[
\nu(\mathbf{VWA}_K) = 0.
\]
Equivalently, if $\alp > 1$ then
\[
\nu(W_K(\alp)) = 0.
\]
\end{conj}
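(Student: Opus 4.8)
\emph{Plan.} The scheme is to deduce Conjecture~\ref{IntrinsicVWAconjecture} from the counting Conjecture~\ref{conj: missing digits Serre} by a convergence Borel--Cantelli argument, and then to prove enough of the latter — extending Theorem~\ref{CountingThm} — to make the deduction run. Fix $\alp>1$ and write $\nu=\nu_{b,D}$, $\kap=\dimH(K)=\log\#D/\log b$. If $|x-p/q|<q^{-\alp}$ with $p/q\in K$ and $\gcd(p,q)=d$, then the reduced fraction $p'/q'=p/q$ satisfies $|x-p'/q'|<q^{-\alp}\le (q')^{-\alp}$; and if infinitely many witnessing pairs reduce to one fraction then $x$ equals that fraction, so $x\in\bQ\cap K$, a countable and hence (as $\nu$ is non-atomic) $\nu$-null set. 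Discarding this set,
\[
W_K(\alp)\subseteq\bigl\{x: x\in B(p/q,q^{-\alp})\ \text{for infinitely many reduced }p/q\in K\bigr\},
\]
so by the first Borel--Cantelli lemma it suffices to show $\sum_{q\ge1}\sum_{p/q\in K,\ \gcd(p,q)=1}\nu\bigl(B(p/q,q^{-\alp})\bigr)<\infty$.

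Now $\nu$, being the natural measure on a self-similar set satisfying the open set condition, is Ahlfors $\kap$-regular, so $\nu(B(y,r))\ll r^{\kap}$ for all $y\in\bR$, $r>0$. Writing $\cR_K(q)$ for the number of reduced $p/q\in K$, the series above is $\ll\sum_{q\ge1}\cR_K(q)q^{-\alp\kap}$, and since $\sum_{q\le T}\cR_K(q)=\cN_K(T)$, partial summation shows that the whole conjecture follows once we know
\[
\cN_K(T)\ll_{\alp} T^{\,\alp\kap-\del}\qquad\text{for some }\del=\del(\alp)>0 .
\]
Conjecture~\ref{conj: missing digits Serre} (namely $\cN_K(T)\ll T^{\kap+\eps}$) supplies this for every $\alp>1$, and for $\alp>2-\varrho/\kap$ it is already contained in Theorem~\ref{CountingThm}; this settles Conjecture~\ref{IntrinsicVWAconjecture} on that range for the sets meeting the hypotheses of Theorem~\ref{CountingThm}.

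What remains is the bound $\cN_K(T)\ll T^{\alp\kap-\del}$ in the range $1<\alp\le 2-\varrho/\kap$, and as $\alp\downarrow1$ this requires the full strength of Conjecture~\ref{conj: missing digits Serre}. The plan is to iterate, over many scales, the Fourier $\ell^{1}$ dimension method of \S\ref{sec: fourier l1} that yields the single power saving $\varrho$ in Theorem~\ref{CountingThm} from one Fourier-decay input at one scale. I would stratify the reduced fractions $p/q\in K$ according to the $b$-adic part of $q$ and to $\ell=\ord_v(b)$, with $v$ the prime-to-$b$ part of $q$; these data determine the pre-period and period of the base-$b$ expansion of $p/q$ and force the expansion of any $x\in B(p/q,q^{-\alp})$ to agree with a periodic word on a stretch of length $\asymp\alp\log_b q$. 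For fractions whose denominator is $b$-smooth, or whose period $\ell$ is small compared with $\log q$, the corresponding part of $\sum_q\cR_K(q)q^{-\alp\kap}$ can be summed directly: there $\cR_K(q)$ is controlled by the information content of the fraction and one expects $\cR_K(q)\ll q^{\kap+o(1)}$, giving convergence for all $\alp>1$. The remaining, and I believe decisive, case is that of reduced $p/q\in K$ with $q$ small but $\ord_q(b)$ large — equivalently, missing-digit integers $m<b^{\ell}$ that share a large common divisor with $b^{\ell}-1$. \textbf{This is the main obstacle}: no bound of the needed strength is presently known for the number of such $m$, and this is precisely the phenomenon that leaves Conjecture~\ref{conj: missing digits Serre} open. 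I would try to handle it by running the Fourier/large-sieve estimate at several scales at once, together with an equidistribution estimate for missing-digit integers in arithmetic progressions to moduli $d\mid b^{\ell}-1$ — the sort of input available for missing-digit integers in general progressions — so as to bootstrap the counting exponent down from $2\kap-\varrho$ toward $\kap$; alternatively one can attempt to bound $\nu(\mathbf{VWA}_K)$ directly by a harmonic-analytic argument exploiting the self-similarity of $\nu$ and the fact that the approximants lie in $K$, bypassing the pointwise count altogether. With this last case under control, the counting bound holds throughout $1<\alp\le2-\varrho/\kap$, and the Borel--Cantelli deduction above gives $\nu(W_K(\alp))=0$ for all $\alp>1$, that is, $\nu(\mathbf{VWA}_K)=0$.
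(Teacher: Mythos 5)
The statement you are asked about is a \emph{conjecture} (due to Broderick, Fishman and Reich), and the paper does not prove it: it remains open, and the paper's contribution is only the partial result of Theorem \ref{IntrinsicThm}, namely $\nu(W_K(2-\varrho))=0$ for certain missing-digit sets, obtained from the counting bound $\cN_K(T)\ll T^{2\kap-\varrho}$ of Theorem \ref{CountingThm} via exactly the Borel--Cantelli reduction you describe (this is the paper's Lemma in \S\ref{ADreg}: if $\cN_K(T)\ll T^{\bet}$ and $\alp>\bet/\kap$ then $\nu(W_K(\alp))=0$). Up to that point your proposal is sound and coincides with the paper's route: AD-regularity of $\nu$, the dyadic/partial-summation step, and the observation that Conjecture \ref{conj: missing digits Serre} would give the full conjecture are all correct and are exactly how the authors frame the implication.

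However, your proposal does not constitute a proof of the conjecture, and you acknowledge this yourself: everything for the range $1<\alp\le 2-\varrho/\kap$ hinges on the counting bound $\cN_K(T)\ll T^{\alp\kap-\del}$, which as $\alp\downarrow 1$ is precisely the open Conjecture \ref{conj: missing digits Serre}. The paragraph beginning ``What remains'' is a research programme, not an argument: the stratification by the $b$-adic part of $q$ and by $\ell=\ord_v(b)$, the claimed bound $\cR_K(q)\ll q^{\kap+o(1)}$ for small-period denominators, and the proposed multi-scale Fourier/large-sieve bootstrap are all unsubstantiated, and the decisive case you isolate --- missing-digit integers $m<b^{\ell}$ sharing a large common divisor with $b^{\ell}-1$ --- is exactly the obstruction that keeps the conjecture open; no estimate of the required strength is known, and the paper's method (Fourier $\ell^1$ dimension plus Theorem \ref{thm: l1 to main}) cannot by itself push the exponent below the $2\kap-\varrho$ barrier, since it only exploits $\hat\kappa_1(\nu)>1/2$ at a single scale. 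So the genuine gap is that the proof's core step is missing: you have reduced an open conjecture to another open conjecture (or to an unproved equidistribution input), which the paper deliberately does not claim to resolve.
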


Conjecture \ref{conj: missing digits Serre} implies Conjecture \ref{IntrinsicVWAconjecture}, as discussed in \cite{BFR2011}. Weiss \cite{Weiss2001} showed that 
\begin{equation} \label{WeissExponent}
\nu(W_K(\alp)) = 0 \qquad (\alp > 2).
\end{equation}
We are able to improve (\ref{WeissExponent}) for a large class of missing-digit sets.
\begin{thm}[Main Theorem 2] \label{IntrinsicThm} Let $K=K_{b,D}$ be such that $b\geq 5$ and $\#D=b-1$ or $b=4$ and $D=\{1,2,3\}$ or $\{0,1,2\}$. Let $\nu$ be the natural measure on $K.$ Then there exists an effectively-computable $\varrho > 0$ such that
\[
\nu(W_{K}(2 - \varrho)) = 0.
\]
\end{thm}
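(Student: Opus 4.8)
The plan is to deduce Theorem~\ref{IntrinsicThm} from the counting bound of Theorem~\ref{CountingThm} via a covering argument and the convergence half of the Borel--Cantelli lemma; the only additional input is the Ahlfors regularity of the natural measure $\nu$. First I would record this regularity. Writing $\kap = \dimH K$, so that $\kap = \log(b-1)/\log b$ in the first family of examples and $\kap = \log 3/\log 4$ in the base-$4$ cases, the natural measure assigns mass $(\#D)^{-n}$ to each of the $(\#D)^n$ level-$n$ base-$b$ cylinders of $K$, each of which is an interval of length $b^{-n}$; since $(\#D)^{-n} = (b^{-n})^{\kap}$ and an interval of length $r$ with $b^{-n-1} \le r < b^{-n}$ meets at most two such cylinders, we obtain the Frostman-type bound
\[
\nu\bigl(B(x,r)\bigr) \ll r^{\kap} \qquad (x \in \bR,\ 0 < r \le 1),
\]
with an implied constant depending only on $b$.

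Next I would set up the cover. Let $\varrho > 0$ be the exponent produced by Theorem~\ref{CountingThm}, so that $\cN_K(T) \ll T^{2\kap - \varrho}$, and fix $\alp = 2 - \eta$ with $\eta \in (0,1)$ to be chosen at the end. The key reduction is to reduced fractions: if $x$ is irrational and $|x - p/q| < q^{-\alp}$ with $p/q \in K$, then writing $p/q = a/c$ in lowest terms gives $c \le q$, $a/c \in K$, and $|x - a/c| = |x - p/q| < q^{-\alp} \le c^{-\alp}$, while each fixed reduced fraction $a/c$ arises this way for only finitely many $(p,q)$ because $|x - a/c| > 0$ is fixed whereas $q^{-\alp} \to 0$. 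Hence, since $\bQ \cap K$ is countable and $\nu$ is non-atomic,
\[
W_K(\alp) \ \subseteq\ (\bQ \cap K) \cup \limsup_{j \to \infty} A_j, \qquad A_j := \bigcup_{2^j \le c < 2^{j+1}} \ \bigcup_{\substack{a/c \in K \cap [0,1] \\ \gcd(a,c) = 1}} B\bigl(a/c,\, c^{-\alp}\bigr).
\]
By Theorem~\ref{CountingThm} the number of reduced fractions occurring in $A_j$ is at most $\cN_K(2^{j+1}) \ll 2^{j(2\kap - \varrho)}$, and each ball there has $\nu$-measure $\ll (2^{-j\alp})^{\kap}$ by the displayed Frostman bound; therefore
\[
\nu(A_j) \ll 2^{j(2\kap - \varrho)} \cdot 2^{-j\alp\kap} = 2^{j((2 - \alp)\kap - \varrho)} = 2^{j(\eta\kap - \varrho)}.
\]

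To finish, I would take $\eta := \varrho/(2\kap)$, which is effectively computable since $\varrho$ is and $\kap$ is explicit (and which indeed lies in $(0,1)$, as $\varrho \le \kap$ by the lower bound in \eqref{LowerBound}); then $\eta\kap - \varrho = -\varrho/2 < 0$, so $\sum_{j \ge 1} \nu(A_j) < \infty$, and the Borel--Cantelli lemma gives $\nu(\limsup_j A_j) = 0$. As $\nu(\bQ \cap K) = 0$ as well, this yields $\nu(W_K(2 - \eta)) = 0$, which is the assertion of Theorem~\ref{IntrinsicThm} after relabelling $\eta$ as $\varrho$. The entire content of the argument lies in Theorem~\ref{CountingThm}; the present deduction presents no serious obstacle. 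The one point that needs genuine (if modest) care is the reduction to reduced fractions: without it, a fraction of small denominator lying in $K$ would be counted together with all of its multiples up to $2^{j+1}$, which would inflate the count in $A_j$ by an unbounded factor and destroy the estimate.
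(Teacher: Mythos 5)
Your proposal is correct and is essentially the paper's own deduction: the paper proves this exact statement by combining the counting bound $\cN_K(T)\ll T^{2\kappa-\varrho}$ with AD-regularity of $\nu$ and the first Borel--Cantelli lemma applied to a $\limsup$ of unions of balls centred at rationals of comparable denominator (the lemma in \S3.1, with the same exponent arithmetic $\alpha>\beta/\kappa$, $\beta=2\kappa-\varrho$). The only cosmetic differences are that the paper uses $e$-adic rather than dyadic blocks and does not need your reduced-fraction step explicitly, since $\cN_K$ counts rational points (not pairs $(p,q)$) and the measure of a union is insensitive to multiplicity of centres.
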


\begin{remark} What if we choose $K=[0,1]$ and let $\nu$ be the Lebesgue measure? Dirichlet's approximation theorem implies that $W_{[0,1]}(2)$ contains $[0,1]$ and therefore $\nu(W_{[0,1]}(2))=1.$ Thus, we see that the metric theory of intrinsic diophantine approximation for $[0,1]$ is very different to that of a proper missing-digit set, i.e. a missing-digit set which is not the whole interval $[0,1].$
\end{remark}

Some readers may have seen a variant of the problem in which the intrinsic denominator is used for the middle-third Cantor set $K$. A rational number $p/q \in K$ must have preperiodic ternary expansion. If its initial block has length $i_0 \in \bZ_{\ge 0}$ and its period is $\ell \in \bN$, then its \emph{intrinsic denominator} is
\[
q_{\mathrm{int}} = 3^{i_0} (3^\ell - 1). 
\]
Counting rationals in $K$ by intrinsic height is a simpler task, and by a short calculation
\[
\# \{p/q \in K: q_{\mathrm{int}} \le T \} \ll T^{\kap + \eps}.
\]
The metric theory of intrinsic diophantine approximation with this height is complete, owing to the efforts of Fishman--Simmons \cite{FS2014} and Tan--Wang--Wu \cite{TWW}.

\subsection{Extrinsic approximation}

Our next result concerns approximation of elements of missing-digit sets with rationals
that are not necessarily contained in the missing-digit set. 
We consider a natural generalisation of a conjecture by Bugeaud and Durand \cite[Conjecture 1]{BD2016}, which was originally made for the middle-third Cantor set. See also \cite[Conjecture B]{Yu2021}. 

\begin{conj} \label{BDconj} Let $K$ be a missing-digit set of Hausdorff dimension $\kap$. Then, for $\alp \ge 2$, we have
\begin{equation} \label{BDeq}
\dimH(W_{\mathbb{R}}(\alp) \cap K) = 
\max \left \{ \frac2{\alp} + \kap - 1, \frac{\kap}{\alp} \right \}.
\end{equation}
\end{conj}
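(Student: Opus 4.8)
The plan is to prove the two inequalities in \eqref{BDeq} separately, in each case splitting according to which of $2/\alp+\kap-1$ and $\kap/\alp$ is larger; these coincide at $\alp=(2-\kap)/(1-\kap)$, so the first term governs the range $2\le\alp\le(2-\kap)/(1-\kap)$ and the second governs $\alp\ge(2-\kap)/(1-\kap)$. The organising observation is that if $x\in K$ and $|x-p/q|<q^{-\alp}$ then $\dist(p/q,K)<q^{-\alp}$, so everything reduces to counting rationals near $K$. There are two relevant bounds for $\#\{(p,q):q\le T,\ \dist(p/q,K)<q^{-\alp}\}$: a ``Lebesgue'' bound $\ll T^{\,2-\alp(1-\kap)+\eps}$, valid for every $\alp$, which treats $p/q$ as a generic rational landing in the $q^{-\alp}$-neighbourhood of $K$, a set of measure $\asymp q^{-\alp(1-\kap)}$; and, for $\alp\ge(2-\kap)/(1-\kap)$, an ``arithmetic'' bound $\ll T^{\kap+\eps}$ exploiting the digit structure of $K$, which is exactly Conjecture~\ref{conj: missing digits Serre} in its (harmless) ``near $K$'' form. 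The two terms of \eqref{BDeq} are the Hausdorff exponents these two counts produce.

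For the upper bound I would run the standard covering argument. Since $W_{\bR}(\alp)\cap K\subseteq\limsup_q\bigcup_p(B(p/q,q^{-\alp})\cap K)$ and each piece $B(p/q,q^{-\alp})\cap K$ (with $p/q$ within $q^{-\alp}$ of $K$) is covered by $O(1)$ cylinders of $K$ of length $\asymp q^{-\alp}$, one gets $\mathcal{H}^s(W_{\bR}(\alp)\cap K)=0$ as soon as $\sum_Q \#\{(p,q):q\sim Q,\ \dist(p/q,K)<q^{-\alp}\}\cdot Q^{-\alp s}<\infty$ over dyadic $Q$. The Lebesgue count makes this converge for $s>2/\alp+\kap-1$, and, in its range, the arithmetic count makes it converge for $s>\kap/\alp$; taking the better estimate in each regime gives $\dimH(W_{\bR}(\alp)\cap K)\le\max\{2/\alp+\kap-1,\kap/\alp\}$. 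The Lebesgue count is the ``soft'' input --- it says that $\nu$ does not cluster near rationals more than Lebesgue measure does --- and follows by Poisson summation from a decay estimate for $\widehat{\nu}$, most efficiently packaged through the Fourier $\ell^1$ dimension of $\nu$ computed in \S\ref{sec: fourier l1}. The arithmetic count is the decisive obstacle: it is precisely Conjecture~\ref{conj: missing digits Serre}, established so far only for the missing-digit sets of Theorem~\ref{CountingThm}. I therefore expect this route to prove \eqref{BDeq} unconditionally exactly for those $K$, with the Fourier $\ell^1$ dimension computation as the technical heart, just as for Theorem~\ref{CountingThm}; for general missing-digit $K$ one obtains \eqref{BDeq} conditionally on Conjecture~\ref{conj: missing digits Serre}.

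For the lower bound I would use mass transference. The bound $\dimH\ge\kap/\alp$ comes from rationals lying in $K$ --- for instance endpoints of level-$m$ construction intervals with $b^m\le T$, of which there are $\gg T^{\kap}$ (cf.\ \eqref{LowerBound}): because these centres lie on $K$, the classical mass transference principle for the Ahlfors-regular measure $\nu$ upgrades the trivial fact that every $x\in K$ lies within $\asymp q^{-1}$ of such a rational to $\limsup$-sets of $q^{-\alp}$-balls, giving dimension $\ge\kap\cdot(1/\alp)$. The bound $\dimH\ge 2/\alp+\kap-1$ uses all rationals and is a Jarn\'{\i}k--Besicovitch-type Cantor construction inside $K$: one passes repeatedly from a level-$m$ cylinder $I$ to those level-$N$ subcylinders of $I$ lying within $q^{-\alp}\asymp b^{-N}$ of some rational $p/q$, retaining enough of them per $I$ that, as $N/m\to\infty$, the resulting Cantor set has dimension tending to $2/\alp+\kap-1$ (which is positive precisely when $\alp\le(2-\kap)/(1-\kap)$). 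The hurdle here is that the branching count needs a \emph{lower} bound for the rationals near $K$ localised inside $I$, i.e.\ an equidistribution statement for these rationals at a scale finer than $1/q$ --- again an appeal to Fourier decay / the Fourier $\ell^1$ dimension of $\nu$, which is also why a restriction to favourable $b,D$ reappears here --- and keeping this count sharp uniformly in $\alp$ across $[2,(2-\kap)/(1-\kap)]$ is, I expect, the main technical difficulty on this side.
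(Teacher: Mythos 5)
The statement you are trying to prove is labelled a \emph{conjecture} in the paper, and the paper does not prove it: the authors only obtain the partial result Theorem \ref{BDtype}, namely the exact value $2/\alp+\kap-1$ for $K=K_{b,D}$ with $b\ge 7$, $\#D=b-1$, and only for $\alp$ in a short interval $[2,2+\varrho)$. Their route is to invoke Yu's criterion (Theorem \ref{thm: l1 to BD}), which requires $\hat\kappa_1(\nu)\dimH(\nu)>1/2$, and then to verify this inequality via the Fourier $\ell^1$ dimension machinery of \S\ref{sec: l1 computation} (Proposition \ref{pr:kap1kap>12}). So there is no proof of \eqref{BDeq} in the paper to compare yours against, and any purported proof of the full conjecture should be treated with suspicion.

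As a proof, your proposal has genuine gaps, and you partly acknowledge them; the most serious points are these. First, the ``arithmetic'' count you need in the regime $\alp\ge(2-\kap)/(1-\kap)$ is (a neighbourhood version of) Conjecture \ref{conj: missing digits Serre}, which is open; the paper's Theorem \ref{CountingThm} gives only $\cN_K(T)\ll T^{2\kap-\varrho}$, far short of the $T^{\kap+\eps}$ your covering argument requires, so your claim that this route yields \eqref{BDeq} unconditionally for the sets of Theorem \ref{CountingThm} is not correct. Second, the ``Lebesgue'' count $\ll T^{2-\alp(1-\kap)+\eps}$ for rationals within $q^{-\alp}$ of $K$ is not soft: it is exactly the kind of statement that needs quantitative Fourier decay (the $\hat\kappa_1$ input), is the technical heart of Yu's work, and is precisely why the known results carry restrictions on $b$, $D$ and on the range of $\alp$; you cannot take it for granted for all missing-digit sets and all $\alp\ge2$. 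Third, on the lower-bound side, the $\kap/\alp$ bound via intrinsic rationals and mass transference is fine, but the Jarn\'{\i}k--Besicovitch-type construction for $2/\alp+\kap-1$ needs a \emph{lower} bound for rationals near $K$ localised in cylinders, uniformly over the whole range $2\le\alp\le(2-\kap)/(1-\kap)$; this equidistribution input is unproven in that generality (the paper and \cite{Yu2021} only control it for $\alp$ close to $2$ under the $\hat\kappa_1$ hypothesis). In short, your outline correctly identifies the expected mechanism and the two exponents, but every decisive counting input is either the open Conjecture \ref{conj: missing digits Serre} or a Fourier-decay statement known only in the restricted settings where the paper proves Theorem \ref{BDtype}, so the conjecture remains unproven by your argument.
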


Taking $\alp \to 2^+$ gives rise to the following natural generalisation of the conjecture \cite[Equation (29)]{LSV2007} of Levesley, Salp and Velani, as stated for the middle-third Cantor set in \cite[Conjecture A]{Yu2021}. Recall that the set of \emph{very well approximable numbers} is
\[
\mathbf{VWA} := \bigcup_{\alp > 2} W_{\mathbb{R}}(\alp).
\]

Notice that the union is taken for exponents greater than $2$ rather than $1$ as in the previous section.
The reason for this is that now we consider approximations by all rationals not by only those
that are contained in $K$, which makes the critical threshold for the exponent larger.
 
\begin{conj} \label{LSVconj}
Let $K$ be a missing-digit set. Then
\begin{equation}
\label{DimConj}
\dimH(\mathbf{VWA} \cap K) = \dimH(K).
\end{equation}
\end{conj}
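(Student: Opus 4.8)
The plan is to treat the two inequalities separately. The bound $\dimH(\mathbf{VWA}\cap K)\le\kap$ is immediate, since $\mathbf{VWA}\cap K\subseteq K$ and Hausdorff dimension is monotone, so the whole problem is the lower bound $\dimH(\mathbf{VWA}\cap K)\ge\kap$. Because $\mathbf{VWA}\cap K\supseteq W_{\mathbb{R}}(\alp)\cap K$ for every fixed $\alp>2$, and the latter set increases as $\alp\downarrow 2$, it suffices to show $\dimH(W_{\mathbb{R}}(\alp)\cap K)\to\kap$ as $\alp\to2^+$; concretely I would aim for the lower-bound half of Conjecture \ref{BDconj} in the critical range, $\dimH(W_{\mathbb{R}}(\alp)\cap K)\ge\frac{2}{\alp}+\kap-1$ for $\alp>2$ sufficiently close to $2$, and then let $\alp\to2^+$. (The matching upper bound would be proved separately, by a covering argument fed with a counting estimate of the type of Theorem \ref{CountingThm}, but it is not needed here; and the elementary construction of $x\in K$ with long constant blocks in its base-$b$ expansion, well approximable by rationals of the shape $p/\bigl((b-1)b^{n}\bigr)$, reaches only the weaker exponent $\kap/\alp$, i.e.\ $\kap/2$ in the limit, so the content of the conjecture lies precisely in surpassing it.)

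The heart of the matter is a counting/ubiquity statement for rationals of arbitrary denominator near $K$. Writing $\nu=\nu_{b,D}$ for the natural measure on $K$, the targets are: first, that for $\del=Q^{-2}(\log Q)^{C}$ with $C$ large and all large $Q$,
\[
\nu\Bigl(\ \bigcup_{Q<q\le 2Q}\ \bigcup_{p\in\mathbb{Z}}\ B\bigl(p/q,\ \del\bigr)\ \Bigr)\ \gg\ 1 ;
\]
and second, that for each $q\asymp Q$ there are $\gg q^{1-\alp(1-\kap)}$ rationals $p/q$ within $q^{-\alp}$ of $K$, with controlled clustering. The scales are forced: the $Q^{-2}$-neighbourhood of $K$ has Lebesgue measure $\asymp Q^{-2(1-\kap)}$ and so carries $\asymp Q^{2\kap}$ rationals with $q\asymp Q$ (using $\kap>1/2$, valid for the digit sets in question), exactly the number of radius-$Q^{-2}$ balls needed to cover $K$, while shrinking the radius below $Q^{-2}$ destroys every rational except those genuinely that close to $K$. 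The mechanism is a second-moment estimate: one studies $N_{Q,\del}(x)=\#\{(p,q):Q<q\le2Q,\ |x-p/q|<\del\}$ with $x$ distributed by $\nu$, whose mean is of the expected order, and whose second moment --- after replacing indicators by smooth majorants --- unfolds into sums of $\widehat\nu(\cdot)$ along the arithmetic progressions of moduli $q\asymp Q$. Good $\ell^1$-summability of $\widehat\nu$ over these progressions, i.e.\ a lower bound on the Fourier $\ell^1$ dimension of $\nu$ together with the computational method of \S\ref{sec: fourier l1}, forces $N_{Q,\del}$ to concentrate about its mean, giving both the displayed ubiquity and the non-clustering. This is where the hypotheses on $(b,D)$ enter: the Fourier $\ell^1$ dimension of $\nu_{b,D}$ is controlled well enough to run the argument exactly for the digit sets of Theorems \ref{CountingThm}--\ref{IntrinsicThm}.

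The last step converts this input into a dimension bound. A standard divergence argument (Borel--Cantelli with quasi-independence on average, both supplied by the second-moment control) first yields that $\limsup_{q}\bigcup_{p}B(p/q,q^{-2})\cap K$ has full $\nu$-measure; then a mass-transference argument tailored to the $\kap$-Ahlfors-regular measure $\nu$ on $K$ --- keeping only the rationals that survive within $q^{-\alp}$ of $K$ after shrinking, and balancing the one-dimensional ambient gain against the $\kap$-dimensional fractal one, so that the critical exponent solves $\sum_{q}q^{1-\alp(1-\kap)}(q^{-\alp})^{s}=\infty$ --- produces a Cantor subset of $W_{\mathbb{R}}(\alp)\cap K$ of dimension $\frac{2}{\alp}+\kap-1$ in the relevant range. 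Letting $\alp\to2^{+}$ gives $\dimH(\mathbf{VWA}\cap K)\ge\kap$, hence equality. The one genuinely hard point is the counting/ubiquity estimate for rationals of arbitrary denominator near $K$: the power-of-$b$ and $p/((b-1)b^{n})$ cases are elementary but too weak, whereas the general case needs quantitative cancellation in the exponential sums attached to $\nu_{b,D}$ --- a usable lower bound for its Fourier $\ell^1$ dimension that is uniform over all bases $b$ and digit sets $D$ --- and obtaining such uniformity, beyond the partial results available here, would require a genuinely new idea.
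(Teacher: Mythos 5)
The statement you are trying to prove is stated in the paper as a \emph{conjecture} (Conjecture \ref{LSVconj}); the paper does not prove it, and neither does your proposal. What the paper actually establishes is the special case in Theorem \ref{BDtype}: for $K=K_{b,D}$ with $b\ge 7$ and $\#D=b-1$, one has $\dimH(W_\R(\alp)\cap K)=\frac2\alp+\kap-1$ for $\alp\in[2,2+\varrho)$, which yields \eqref{DimConj} for those sets only. The route there is exactly the reduction you describe in your first paragraph — prove the Bugeaud--Durand-type formula for $\alp$ slightly above $2$ and let $\alp\to 2^+$ — but the hard analytic input is packaged as Theorem \ref{thm: l1 to BD} (from \cite{Yu2021}), whose hypothesis $\hat\kappa_1(\nu)\dimH(\nu)>1/2$ is verified in Proposition \ref{pr:kap1kap>12} only for $b\ge 7$ with a single missing digit. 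So your overall strategy coincides with the paper's partial-progress strategy; the issue is that it cannot, as written, deliver the conjecture for \emph{all} missing-digit sets.

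The genuine gap is the one you concede at the end: the counting/ubiquity estimate for rationals of arbitrary denominator near $K$, with the required concentration and non-clustering, is not available uniformly in $(b,D)$, and the Fourier $\ell^1$ mechanism you invoke provably fails outside a restricted range. The paper's own numerics show $\hat\kappa_1(\nu)<1/2$ for $b=3$ (including the middle-third Cantor set) and for $(b,a)=(4,1),(4,2)$, so the hypothesis $\hat\kappa_1(\nu)\dimH(\nu)>1/2$ of Theorem \ref{thm: l1 to BD} is false precisely in the most classical cases covered by the conjecture; moreover your intermediate heuristic ``$\kappa>1/2$, valid for the digit sets in question'' is simply not valid for general missing-digit sets, where $\kappa$ can be arbitrarily small (few digits, large base), and then the expected count of rationals $p/q$ with $q\asymp Q$ in a $Q^{-2}$-neighbourhood of $K$ is $o(1)$ per ball and the second-moment/quasi-independence step has no chance to run in the form you describe. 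In short: your upper bound and your reduction to the lower-bound half of Conjecture \ref{BDconj} are fine, and your sketch mirrors how the paper (via \cite{Yu2021}) handles the cases it can handle, but the conjecture as stated remains open, and the missing uniform estimate is not a technical detail — it is the entire content.
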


The last named author made progress towards these conjectures in \cite[Theorem A and Remark A]{Yu2021}, showing that Conjecture \ref{BDconj} holds for missing-digit sets with base exceeding $10^7$, few missing digits and $\alp$ being sufficiently close to $2.$ We are able to extend this result to missing-digit sets that may not have large bases.

\begin{thm}[Main Theorem 3] \label{BDtype} Let $K=K_{b,D}$ with $b \ge 7$ and $\#D=b-1$. 
Let $\kappa=\dimH(K)$.
Then there is an effectively-computable $\varrho>0$ such that
\[
\dimH(W_\R(\alp)\cap K)= \frac{2}{\alp}+\kappa-1
\] holds for $\alp\in [2,2+\varrho)$.
In particular, this implies \eqref{DimConj}.
\end{thm}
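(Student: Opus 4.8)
The plan is to prove the two inequalities
\[
\dimH(W_\R(\alp)\cap K)\le \frac{2}{\alp}+\kappa-1
\qquad\text{and}\qquad
\dimH(W_\R(\alp)\cap K)\ge \frac{2}{\alp}+\kappa-1,
\]
and to observe that for $\alp$ in a small enough right-neighbourhood of $2$ the quantity $\tfrac{2}{\alp}+\kappa-1$ is the maximum in \eqref{BDeq}: indeed $\tfrac{2}{\alp}+\kappa-1\ge\tfrac{\kappa}{\alp}$ precisely when $\alp\le(2-\kappa)/(1-\kappa)$, which holds on $[2,2+\varrho)$ once $\varrho$ is small since $\kappa<1$, so the stated formula agrees with Conjecture \ref{BDconj} throughout this range. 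Here $\kappa=\log(b-1)/\log b=\dimH K$ and $\nu=\nu_{b,D}$ is the natural measure, which is Ahlfors--David $\kappa$-regular. Both inequalities are powered by one arithmetic input: a sharp count, with a genuine power-saving error, for the reduced fractions $p/q$ with $q\asymp Q$ lying within a prescribed distance $\delta$ of $K$, namely
\[
\#\{p/q:\gcd(p,q)=1,\ q\asymp Q,\ \dist(p/q,K)<\delta\}\asymp Q^{2}\delta^{1-\kappa}
\]
for $\delta$ as small as $\asymp Q^{-\alp}$ (the secondary contribution from fractions lying exactly in $K$ being lower-order here, by Theorem \ref{CountingThm}), together with the $\nu$-equidistribution of these fractions at resolutions coarser than $\asymp Q^{-2}$. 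This is exactly what the Fourier $\ell^{1}$-dimension method of \S \ref{sec: fourier l1} delivers: its central computation, valid for $b\ge 7$ with $\#D=b-1$, gives $\dim_{\ell^{1}}(\nu_{b,D})=\kappa$, and the counting and equidistribution statements then follow by writing the relevant smoothed counting functions as integrals against $\nu$, expanding in Fourier series, and bounding the resulting exponential sums through $\sum_{|n|\le N}|\widehat{\nu}(n)|$.

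For the upper bound, note that if $x\in W_\R(\alp)\cap K$ then $x\in B(p/q,q^{-\alp})$ for infinitely many coprime pairs $(p,q)$, and since $x\in K$ every such fraction satisfies $\dist(p/q,K)\le|x-p/q|<q^{-\alp}$. Hence $W_\R(\alp)\cap K$ is contained in the $\limsup$, over dyadic scales $Q$, of the union of the balls $B(p/q,q^{-\alp})$ with $q\asymp Q$ and $\dist(p/q,K)<q^{-\alp}$. By the count above there are $\ll Q^{2+\alp(\kappa-1)+\eps}$ such balls, each of radius $\ll Q^{-\alp}$. For any $s>\tfrac{2}{\alp}+\kappa-1$, choosing $\eps$ small, the series $\sum_{Q}Q^{2+\alp(\kappa-1)+\eps}(Q^{-\alp})^{s}$ over dyadic $Q$ converges, and a Hausdorff--Cantelli (covering) argument gives $\mathcal H^{s}(W_\R(\alp)\cap K)=0$. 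Therefore $\dimH(W_\R(\alp)\cap K)\le\tfrac{2}{\alp}+\kappa-1$.

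For the lower bound, the key point is that the $\asymp Q^{2+\alp(\kappa-1)}$ fractions lying within $\asymp Q^{-\alp}$ of $K$, being $\nu$-equidistributed, form --- after replacing each fraction by a nearby point of $K$ and shrinking radii by a bounded factor --- a \emph{ubiquitous system} for the metric measure space $(K,\nu)$ at resolution $\rho(Q)=Q^{-u}$ with $u=\alp-(\alp-2)/\kappa$. This exponent is forced: $u$ is the unique value for which the $\rho(Q)$-neighbourhoods of these fractions have total $\nu$-mass $\asymp Q^{2+\alp(\kappa-1)}\rho(Q)^{\kappa}\asymp 1$, so that the equidistribution (which precludes clumping) makes these neighbourhoods cover $\nu$-almost all of $K$ for all large $Q$; note $u<2$ whenever $\alp>2$, so the equidistribution needed is at a comfortably coarse scale. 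Feeding this ubiquity into a mass transference argument --- in the form of the Hausdorff-dimension formula for $\limsup$ sets governed by ubiquity inside a $\kappa$-regular space (in the spirit of Beresnevich--Velani, as used for problems of this type in \cite{Yu2021}), or equivalently via a direct Jarn\'ik-type Cantor construction with rapidly growing denominators --- now with the shrunken radii $q^{-\alp}\asymp Q^{-\alp}$, yields
\[
\dimH(W_\R(\alp)\cap K)\ \ge\ \kappa\cdot\frac{u}{\alp}\ =\ \frac{\kappa\alp-(\alp-2)}{\alp}\ =\ \frac{2}{\alp}+\kappa-1.
\]
This establishes the claimed identity. Since $\mathbf{VWA}\cap K\supseteq W_\R(\alp)\cap K$ for every $\alp\in(2,2+\varrho)$, letting $\alp\downarrow 2$ gives $\dimH(\mathbf{VWA}\cap K)\ge\kappa=\dimH K$, and the reverse inequality is trivial; this is \eqref{DimConj}.

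The main obstacle I expect is the arithmetic input itself: proving $\dim_{\ell^{1}}(\nu_{b,D})=\kappa$ for all $b\ge 7$ and, with it, the sharp count \emph{with a power saving} and the quantitative equidistribution of near-$K$ rationals. The lower bound genuinely needs the equidistribution rather than merely the counting upper bound, since one must bound from below the $\nu$-measure of the $Q^{-u}$-neighbourhoods of these fractions even though their total mass is only $\asymp 1$ --- a borderline situation in which any concentration would be fatal. This is also where the hypotheses enter: restricting $\alp$ to $[2,2+\varrho)$ keeps every working scale of the form $Q^{-2+o(1)}$, where the Fourier $\ell^{1}$ bounds are strong enough, and forces the first term of \eqref{BDeq} to dominate; and the strengthening from $b\ge 5$ in Theorem \ref{CountingThm} to $b\ge 7$ here is precisely the extra room the Fourier $\ell^{1}$-dimension computation requires to reach the optimal exponent $\kappa$ in both directions.
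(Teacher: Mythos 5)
There is a genuine gap, and it sits at the heart of your argument: you assert that the Fourier $\ell^1$ machinery ``gives $\hat\kappa_1(\nu_{b,D})=\kappa$'' for $b\ge 7$, $\#D=b-1$, and you feed this equality into both your counting claim at scale $\delta\asymp Q^{-\alp}$ and your equidistribution/ubiquity claim. That equality is not what the method delivers, and it is not expected to be true: Theorem \ref{th:bounds} gives an \emph{upper} bound on $\hat\kappa_1$ that is generically strictly below $\kappa$ (for example, in base $3$ one has $\hat\kappa_1(\nu)<1/2<\kappa$, as noted in \S\ref{sc:numeric}), and Theorem \ref{thm: l1 bound for special arrangement} only gives $\hat\kappa_1(\nu)\ge \kappa-\log(4+\log 2l)/\log b$, with a genuine deficit. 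What the paper actually proves, and what suffices, is the threshold inequality $\hat\kappa_1(\nu)\dimH(\nu)>1/2$ (Proposition \ref{pr:kap1kap>12}); Theorem \ref{BDtype} then follows immediately by citing Theorem \ref{thm: l1 to BD} (Yu \cite{Yu2021}, Theorem 9.5), whose hypothesis is exactly this product condition. Your proposal replaces the correct, provable hypothesis by a stronger unproved (and likely false) one, and offers no argument for it beyond asserting that the Fourier method supplies it; as written, the sharp count $\asymp Q^2\delta^{1-\kappa}$ down to $\delta\asymp Q^{-\alp}$ and the quantitative equidistribution therefore have no support.

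Relatedly, the part of the proof that constitutes the paper's actual work on this theorem is absent from your sketch: establishing the Fourier input for \emph{all} $b\ge 7$ requires the two-sided Riemann-sum bounds of Theorem \ref{th:bounds}, the exponential-sum estimate of Lemma \ref{lm:expsum} (which handles $b\ge 112$ analytically, giving $\hat\kappa_1\ge \kappa-\log(5+\log(2b)+2/b)/\log b$), and rigorous interval-arithmetic computations for $7\le b\le 111$ with the threshold $\tau=\log b/(2\log(b-1))=1/(2\kappa)$ --- i.e.\ precisely the condition $\hat\kappa_1\kappa>1/2$, not $\hat\kappa_1=\kappa$. The remaining structure you describe (covering for the upper bound; counting plus equidistribution, ubiquity and mass transference for the lower bound) is essentially a re-derivation of the cited Theorem \ref{thm: l1 to BD} rather than new content, and the step where equidistribution at scale $Q^{-u}$, $u=\alp-(\alp-2)/\kappa$, yields full (or positive-proportion) coverage of $K$ is exactly the delicate point that in \cite{Yu2021} is extracted from $\ell^1$ Fourier bounds under the product condition; it cannot be waved through as ``equidistribution precludes clumping.'' If you restate your arithmetic input as the inequality $\hat\kappa_1(\nu)\kappa>1/2$ and either cite Theorem \ref{thm: l1 to BD} or carry out its proof honestly, and then prove that inequality for all $b\ge7$ (analytically for large $b$, numerically for small $b$), your outline becomes the paper's proof; without that, the argument does not stand.
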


\subsection{Organisation of the paper}
In \S\ref{sec: fourier l1}, we recall the notion of Fourier $\ell^1$ dimension from \cite{Yu2021}, and explain
its role in the proof of the main results.
In \S\ref{ADreg}, we prove results that reduce the main theorems to
estimating the Fourier $\ell^1$
dimensions of missing-digit measures.
We give upper and lower bounds on the Fourier $\ell^1$ dimension of missing
digit measures in \S\ref{sec: l1 computation}.
This leads to an algorithm to compute Fourier $\ell^1$ dimension to arbitrary
precision.
In \S\ref{sc:analytic}, we use the bounds in \S\ref{sec: l1 computation}
to prove the main results for large $b$.
In \S\ref{sc:numeric}, we carry out numerical calculations for the bounds
in \S\ref{sec: l1 computation} to prove the main results for the remaining
small values of $b$.

\subsection*{Acknowledgements}
First and foremost, we thank Demi Allen for many productive discussions, which greatly contributed to our project. We also thank her for helpful comments on an earlier draft.

We thank Dmitry Kleinbock for a helpful suggestion. We thank Fredrik Johansson for helpful advice on numerical computations involving interval arithmetic.
Finally, we thank an anonymous referee for a careful reading of the paper.

\subsection*{Rights}

For the purpose of open access, the authors have applied a Creative Commons Attribution (CC-BY) licence to any Author Accepted Manuscript version arising from this submission.

\section{Fourier \texorpdfstring{$\ell^1$}{l1} dimension and applications} \label{sec: fourier l1}

Theorems \ref{CountingThm}, \ref{IntrinsicThm}, \ref{BDtype} are all proved via a Fourier 
analytic method. We now introduce the notion of Fourier $\ell^1$ dimension. We write
\begin{equation}\label{eq:e}
	e(x) = e^{2 \pi i x}
\end{equation}
throughout. Let $\nu$ be a Borel probability measure supported on $[0,1]$.  For $\xi \in \bZ$, the $\xi^{\mathrm{th}}$ \emph{Fourier coefficient} of $\nu$ is
\[
\hat \nu(\xi) = \int_0^1 e(-\xi x) \d \nu(x),
\]
see for instance \cite{Gra2014}. For $t \ge 1$, the \emph{Fourier $\ell^t$ dimension} of $\nu$ is
\[
\hat\kap_t(\nu) = \sup \left \{ s \ge 0: \sum_{\xi = 0}^Q |\hat \nu(\xi)|^t \ll_{\nu,s} Q^{1-s} \right \}.
\]
This concept was introduced in \cite{Yu2021}. Note that $\hat \kap_t(\nu)$ is increasing as a function of $t$. 

Our next result gives a sufficient condition in terms of the Fourier $\ell^1$ dimension of the natural measure for the conclusions of Theorems~\ref{CountingThm} and
\ref{IntrinsicThm} to hold.
This result holds in greater generality, and we introduce the relevant terminology.
Let $\nu$ be a Borel measure on $\bR$. For $s > 0$, the measure $\nu$ is \emph{$s$-regular}, or \emph{AD-regular with exponent $s$}, if there is a constant $C > 1$ such that if $x\in \supp(\nu)$ and $r>0$ is sufficiently small then
\[
C^{-1} r^s\leq \nu(B_r(x))\leq C r^s,
\] 
where $B_r(x)$ is the interval of radius $r$ and centre $x$.
For an AD-regular measure $\nu$, the exponent is easily seen to be
\[
s = \dimH(\supp(\nu)),
\]
and we define
\[
\dimH(\nu)
= \dimH(\supp(\nu)).
\]
Moreover, for any AD-regular measure $\nu$, we have the following discrete analogue of an estimate from \cite[Subsection 3.8]{Mattila}:
\[
\hat\kap_2(\nu) = \dimH(\nu).
\]
Missing-digit measures are AD-regular, as discussed in \cite[Section 3]{Yu2021}.

\begin{thm}\label{thm: l1 to main}
Let $\kap \in (0,1)$, let $\nu$ be a $\kap$-regular Borel probability measure such that $\hat\kappa_1(\nu)>1/2$, 
and let $K=\supp(\nu)$.
Then the following hold.
\begin{enumerate}[(a)] 
\item There exists an effectively-computable $\varrho > 0$ such that
\[
\cN_{K}(T) \ll T^{2 \kap - \varrho}
\]
\item There exists an effectively-computable $\varrho > 0$ such that 
\[
\nu(W_{K}(2 - \varrho)) = 0.
\]
\end{enumerate}
\end{thm}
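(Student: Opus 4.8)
The plan is to deduce both statements from a single Fourier-analytic counting estimate: under the hypothesis $\hat\kappa_1(\nu) > 1/2$, the number of rationals $p/q$ with $1 \le q \le Q$ that lie within distance $\delta$ of $K = \supp(\nu)$ should be bounded by something like $Q^2 \delta^{1-\eta} + (\text{smaller terms})$ for some $\eta > 0$ depending on how much $\hat\kappa_1(\nu)$ exceeds $1/2$. The mechanism is the standard one: place a smooth bump $\psi_\delta$ of width $\asymp\delta$ at each rational of height $\le Q$ and expand in a Fourier series; the main term (the $\xi = 0$ frequency, or equivalently $\int \psi_\delta \,d(\text{counting measure}) \cdot \hat\nu(0)$) gives $\asymp Q^2\delta$, and the error term is controlled by $\sum_{\xi \ne 0} |\hat\nu(\xi)| |\hat\psi_\delta(\xi)| \cdot |\widehat{(\text{Farey sum})}(\xi)|$. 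The point of the $\ell^1$ hypothesis is precisely that $\sum_{0 \le \xi \le R}|\hat\nu(\xi)| \ll R^{1 - \hat\kappa_1(\nu) + \eps} = R^{1/2 - \eta'}$, so summing $|\hat\nu(\xi)|$ against the slowly-varying weights coming from $\hat\psi_\delta$ and the Farey fractions beats the trivial bound. I would first isolate this counting lemma cleanly, stating it as: there exist $\eta > 0$ and $\beta < 2$ with
\[
\#\{(p,q) : 1 \le q \le Q,\ \dist(p/q, K) \le \delta\} \ll Q^2 \delta^{1-\eta} + Q^{\beta}.
\]

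For part (a), I would then run a dyadic decomposition. Write $\cN_K(T) = \#\{x \in K : qx \in \bZ,\ q \le T\}$; each such $x$ is a rational $p/q$ \emph{exactly on} $K$, so it lies within $\delta$ of $K$ for every $\delta > 0$. Letting $\delta \to 0$ in a naive way is too lossy, so instead I would separate the count by the size of $q$: rationals with $q \in (Q/2, Q]$ that lie on $K$ are in particular $\delta$-close to $K$ for $\delta = Q^{-2}$ (this is the natural scale, since the relevant spacing of such rationals is $\asymp Q^{-2}$ and one can't hope to do better), giving $\ll Q^2 \cdot Q^{-2(1-\eta)} + Q^\beta = Q^{2\eta} + Q^\beta$. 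Summing over dyadic $Q \le T$ yields $\cN_K(T) \ll T^{\max(2\eta,\beta)}$. The slight subtlety is that this bound is in terms of $\eta$ and $\beta$, not $\kappa$, and one needs $\max(2\eta, \beta) < 2\kappa$; since $\kappa = \hat\kappa_2(\nu) \ge \hat\kappa_1(\nu) > 1/2$ and the trivial bound $\cN_K(T) \ll T^{2\kappa}$ from \eqref{LowerBound} is already available, the correct formulation is that the Fourier input gives a \emph{power saving} over $T^{2\kappa}$, i.e.\ one feeds the AD-regularity bound $\nu(B_\delta(x)) \asymp \delta^\kappa$ into the main term rather than using $\hat\nu(0) = 1$ crudely — the count of rationals near $K$ that actually matters is weighted by how much $\nu$-mass sits near each, and the honest main term is $\asymp Q^2 \delta \cdot \delta^{\kappa - 1}$-type after accounting for regularity, producing the exponent $2\kappa - \varrho$.

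For part (b), this is the standard Borel--Cantelli / measure-bound deduction. Fix $\alpha = 2 - \varrho'$ with $\varrho'$ to be chosen small. We want $\nu(W_K(\alpha)) = 0$, i.e.\ $\nu(\limsup_q E_q) = 0$ where $E_q = \bigcup_{p/q \in K} B_{q^{-\alpha}}(p/q)$. By AD-regularity, $\nu(B_{q^{-\alpha}}(p/q)) \ll q^{-\alpha\kappa}$, so $\nu(E_q) \ll r_q(K) \cdot q^{-\alpha\kappa}$ where $r_q(K) = \#\{p : p/q \in K\}$. Grouping dyadically, $\sum_{q \sim Q} r_q(K) = \cN^\#(Q)$ for the appropriate "exact denominator $\sim Q$" count, which by part (a)'s method is $\ll Q^{2\kappa - \varrho}$; hence $\sum_{q \sim Q} \nu(E_q) \ll Q^{2\kappa - \varrho} \cdot Q^{-\alpha\kappa}$, and this is summable over dyadic $Q$ precisely when $\alpha\kappa > 2\kappa - \varrho$, i.e.\ $\alpha > 2 - \varrho/\kappa$. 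So choosing $\varrho' < \varrho/\kappa$ gives convergence of $\sum_q \nu(E_q)$, and the (easy direction of the) Borel--Cantelli lemma yields $\nu(\limsup_q E_q) = 0$. \textbf{The main obstacle} I expect is the counting lemma itself — specifically, getting the right power of $\delta$ out of the Fourier expansion. The naive estimate $\sum_\xi |\hat\nu(\xi)||\hat\psi_\delta(\xi)|$ only converges because $\hat\psi_\delta(\xi)$ decays past $|\xi| \gg \delta^{-1}$, but one must also contend with the Farey-fraction sum $\sum_{q \le Q} \sum_p e(\xi p/q)$, which concentrates on $\xi$ divisible by small $q$; organizing this so that the $\ell^1$ Fourier decay of $\nu$ is applied on the correct ranges of $\xi$ — and tracking that the gain $\hat\kappa_1(\nu) - 1/2 > 0$ survives the interplay with the $Q$-scale and the $\delta$-scale — is the delicate part. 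The AD-regularity hypothesis is what lets one avoid pointwise control of $\hat\nu$ and work purely with the $\ell^1$ average, but marrying it to the arithmetic of denominators is where the real work lies.
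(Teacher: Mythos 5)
Your part (b) is fine and is essentially the paper's own reduction: write $W_K(\alp)$ as a $\limsup$ of unions of intervals of radius $q^{-\alp}$ around the rationals $p/q\in K$ with $q$ in a fixed (dyadic or exponential) range, bound each interval's mass by $\ll q^{-\alp\kap}$ via AD-regularity, invoke the count from part (a) for the number of such rationals, and apply Borel--Cantelli; the condition $\alp>2-\varrho/\kap$ comes out exactly as you say.

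The genuine gap is in part (a): you never identify where the power saving over $T^{2\kap}$ comes from, and the counting lemma you propose is not the right statement. Your claimed main term $Q^2\delta^{1-\eta}$ is unsubstantiated, and your own correction ("the honest main term is $Q^2\delta\cdot\delta^{\kap-1}$-type") makes the problem visible: with your choice $\delta=Q^{-2}$ the honest argument --- the intervals $B_{\delta}(p/q)$ around distinct rationals on $K$ with $q\sim Q$ are disjoint, each carries $\nu$-mass $\gg\delta^{\kap}$, and their union has $\nu$-mass $\ll Q^2\delta$ --- yields $\ll Q^2\delta^{1-\kap}=Q^{2\kap}$, i.e.\ only the trivial bound, and feeding AD-regularity into the main term does not by itself save anything. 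The role of the hypothesis $\hat\kappa_1(\nu)>1/2$ is precisely to let you take the approximation radius a positive power \emph{below} the spacing scale $Q^{-2}$ while still controlling the Fourier error. In the paper this is the key lemma: for any $v$ with $1/2<v<\hat\kappa_1(\nu)$ one shows $\sum_{Q\le q\le 2Q}\nu(\{x:\|qx\|\le\delta'\})\ll\delta' Q$ whenever $\delta'\ge Q^{-v/(1-v)}$, by Parseval with a test function (frequencies $\xi\equiv 0 \mmod q$), a divisor-function factor $\tau(\xi)\ll(Q/\delta')^{\eps}$ from summing over $q$, and then the $\ell^1$ bound $\sum_{|\xi|\le 4Q/\delta'}|\hat\nu(\xi)|\ll(Q/\delta')^{1-v}$; the admissible scale $\delta'=Q^{-v/(1-v)}$ lies below $Q^{-1-c}$ for some $c>0$ \emph{exactly because} $v>1/2$, so the corresponding radius $\delta'/q$ is a power below $Q^{-2}$. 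Combining this with disjointness and the lower regularity bound gives $\cN_{K}^*(2Q)(\delta'/Q)^{\kap}\ll\delta' Q$, hence $\cN_{K}^*(2Q)\ll Q^{1+\kap}(\delta')^{1-\kap}=Q^{2\kap-(1-\kap)(2v-1)/(1-v)}$, which is the saving $\varrho$; dyadic summation finishes part (a). Your proposal explicitly defers "tracking that the gain $\hat\kappa_1(\nu)-1/2>0$ survives the interplay with the $Q$-scale and the $\delta$-scale" to later, but that interplay (the threshold $v/(1-v)>1$ at $v=1/2$, and the exponent $1-\kap>0$ on the radius) is the entire content of the theorem, so as written the proof of (a) is missing its central step.
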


\begin{remark}
Since Fourier $\ell^1$ dimension is invariant under affine translations,
and our proofs of Theorems \ref{CountingThm} and \ref{IntrinsicThm}
rely only on Theorem~\ref{thm: l1 to main} and lower bounds on $\hat\kappa_1(\nu)$,
Theorems  \ref{CountingThm} and \ref{IntrinsicThm} also hold for any translates
of the missing-digit sets for which they are stated.
\end{remark}

The last named author gave a sufficient condition, in terms of the Fourier $\ell^1$ dimension of the natural measure, for the conclusion of Theorem \ref{BDtype} to hold.

\begin{thm}[Yu \cite{Yu2021}*{Theorem 9.5}]
	\label{thm: l1 to BD}
	Let $K$ be a missing-digit set, and suppose its natural measure $\nu$ satisfies 
	\[
	\hat\kappa_1(\nu)\dimH(\nu)>1/2.
	\]
	Then there exists $\varrho > 0$ such that
	\[\dimH(W_{\mathbb{R}}(\alp) \cap K) = \frac2{\alp} + \kap - 1
	\]
	holds for $\alp\in [2,2+\varrho)$.
\end{thm}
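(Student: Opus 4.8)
The plan is to prove the equality $\dimH(W_\R(\alp)\cap K)=2/\alp+\kap-1$ by establishing matching upper and lower bounds, following the method of \cite{Yu2021}*{Theorem~9.5}. Since missing-digit measures are AD-regular we have $\dimH(\nu)=\dimH(K)=:\kap$, so the hypothesis reads $\hat\kappa_1(\nu)\kap>1/2$. Heuristically, $W_\R(\alp)$ has Hausdorff dimension $2/\alp$ in $[0,1]$ (Jarn\'ik--Besicovitch), and $2/\alp+\kap-1=\kap-(1-2/\alp)$ is exactly the dimension of a transversal intersection of $W_\R(\alp)$ with $K$; the content is that no degenerate alignment occurs. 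Both bounds are governed by the distribution of rationals in a shrinking neighbourhood of $K$, the subject of \cite{Yu2021}, and the Fourier $\ell^1$ dimension enters through its defining bound $\sum_{1\le\xi\le X}|\hat\nu(\xi)|\ll_\eps X^{1-\hat\kappa_1(\nu)+\eps}$.

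\textbf{Upper bound.} I would start from the observation that if $x\in W_\R(\alp)\cap K$ then $|x-p/q|<q^{-\alp}$ for infinitely many reduced fractions $p/q$, each of which has $\dist(p/q,K)<q^{-\alp}$; hence, for every $Q_0$,
\[
W_\R(\alp)\cap K\ \subseteq\ \bigcup_{q\ge Q_0}\ \bigcup_{p\,:\,\dist(p/q,K)<q^{-\alp}}\bigl(B(p/q,q^{-\alp})\cap K\bigr).
\]
The key input is then a counting estimate: the number of reduced $p/q$ with $q\asymp Q$ lying within $Q^{-\alp}$ of $K$ is $\ll_\eps Q^{2-\alp(1-\kap)+\eps}$; equivalently, only few of the $\asymp Q^{\alp\kap}$ intervals in the $Q^{-\alp}$-cover of $K$ contain such a fraction. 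I would prove this by expanding the indicator of the $Q^{-\alp}$-neighbourhood of $K$ in a Fourier series, isolating the zero-frequency main term $\asymp Q^{2}\cdot Q^{-\alp(1-\kap)}$ (the factor $Q^{-\alp(1-\kap)}$ being the Lebesgue measure of that neighbourhood, via $\kap$-regularity), and bounding the remaining frequencies using the $\ell^1$-summation estimate above together with standard divisor-function and Kloosterman-sum inputs for the sums over $p$ and $q$. Substituting the count into the cover, the $s$-dimensional Hausdorff sum becomes $\ll_\eps\sum_{\text{dyadic }Q\ge Q_0}Q^{2-\alp(1-\kap)+\eps-\alp s}$, which tends to $0$ as $Q_0\to\infty$ once $s>2/\alp+\kap-1$; hence $\dimH(W_\R(\alp)\cap K)\le 2/\alp+\kap-1$.

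\textbf{Lower bound.} This is the harder direction, and where the hypothesis is genuinely used. I would first establish a ubiquity statement: at the critical (Dirichlet) scale $q^{-2}$, the rationals lying close to $K$ are distributed densely enough with respect to $\nu$ to form a locally ubiquitous system (in the sense of Beresnevich--Dickinson--Velani). This reduces to a lower bound for the neighbourhood count above at scale $q^{-2}$, together with a quasi-independence (second-moment) estimate for the approximation events; expanding the correlations in Fourier series bounds the deviation from independence by $\ell^1$-sums of $\hat\nu$, and balancing this against the main term is what forces the precise threshold $\hat\kappa_1(\nu)\dimH(\nu)>1/2$ — the $1/2$ coming from the second moment, the factor $\dimH(\nu)=\kap$ from $\nu$ being $\kap$-dimensional. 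I would then feed this ubiquity into a mass transference principle for the AD-regular measure $\nu$ — one that tracks how far into $K$ the balls about the rationals reach as their radii shrink from $q^{-2}$ to $q^{-\alp}$ — to obtain $\dimH(W_\R(\alp)\cap K)\ge 2/\alp+\kap-1$ for $\alp\in[2,2+\varrho)$, where $\varrho>0$ is the surplus in the Fourier inequality. Combined with the upper bound this gives the equality. Finally, letting $\alp\to2^{+}$ gives $\dimH(W_\R(\alp)\cap K)\to\kap$, and since $W_\R(\alp)\cap K\subseteq\mathbf{VWA}\cap K\subseteq K$ for every $\alp>2$ we conclude $\dimH(\mathbf{VWA}\cap K)=\kap=\dimH(K)$, which is \eqref{DimConj}.

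The hard part will be the analysis of rationals near $K$ in both directions, and above all the ubiquity estimate underpinning the lower bound. The difficulty is that $K$ is a rigid deterministic self-similar set whose cylinder structure could conspire with the Farey-fraction structure — either clustering rationals on $K$, which would defeat the upper count, or, worse, leaving a part of $K$ of positive $\nu$-measure without nearby rationals, which would defeat ubiquity. The only leverage on this arithmetic--fractal correlation is the $\ell^1$-Fourier bound for $\nu$, and squeezing from it a count that is sharp at the Dirichlet scale — exactly what produces the threshold $\hat\kappa_1(\nu)\dimH(\nu)>1/2$ — and uniform across the band of scales consumed by the mass transference principle, is the technical heart of the proof. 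The Hausdorff-measure covering computation in the upper bound and the appeal to an off-the-shelf transference principle are, by comparison, routine.
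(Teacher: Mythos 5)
You should note first that the paper does not prove this statement at all: Theorem \ref{thm: l1 to BD} is imported verbatim from Yu \cite{Yu2021}*{Theorem 9.5}, and the present paper's contribution is only to verify its hypothesis $\hat\kappa_1(\nu)\dimH(\nu)>1/2$ for concrete missing-digit measures. So there is no in-paper argument to compare with, and the question is whether your sketch would stand on its own. Your overall plan (upper bound by covering with a count of rationals in a $Q^{-\alpha}$-neighbourhood of $K$; lower bound by a ubiquity statement at the Dirichlet scale fed into a mass transference principle adapted to the AD-regular measure $\nu$) does match the known route in \cite{Yu2021}, but as written it is a plan rather than a proof: the two quantitative pillars are asserted, not established, and they are precisely the content of Yu's theorem and its supporting counting results.

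Concretely, the claimed count $\ll_\eps Q^{2-\alpha(1-\kappa)+\eps}$ for rationals with $q\asymp Q$ within $Q^{-\alpha}$ of $K$ is the heart of the matter: at $\alpha=2$ it coincides with the trivial $1/Q^2$-separation bound, and for $\alpha>2$ it is exactly the hard estimate. Running the Parseval argument (as in \S\ref{ADreg} of this paper) produces, besides the main term $Q^2\delta^{1-\kappa}$, a secondary term of rough size $Q\,\delta^{\hat\kappa_1(\nu)-\kappa}$ coming from the nonzero frequencies; controlling that term is where the Fourier hypothesis enters even in the upper bound, and your sketch does not see it --- nor do Kloosterman sums play any role in this circle of ideas, so invoking them signals hand-waving rather than an actual mechanism. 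On the lower-bound side, the ubiquity/quasi-independence statement for $\nu$ at scale $q^{-2}$ is itself a major theorem (this is what \cite{KL20} and \cite{Yu2021} supply by quite different methods), and your assertion that the precise threshold $\hat\kappa_1(\nu)\dimH(\nu)>1/2$ ``comes from the second moment'' is not derived; distinguishing why this product condition is needed here, whereas the intrinsic results of this paper need only $\hat\kappa_1(\nu)>1/2$, is exactly the technical point a proof must exhibit. The final step, deducing \eqref{DimConj} by letting $\alpha\to 2^+$, is fine, but the body of the argument has genuine gaps.
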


These two results reduce our main theorems, Theorems \ref{CountingThm}, \ref{IntrinsicThm}
and \ref{BDtype}, to proving suitable estimates for the Fourier $\ell^1$ dimension
of missing-digit measures.
Specifically, we will prove the following results, which will complete
the proofs of the main theorems.

\begin{prop}\label{pr:kap1>12}
Let $b\ge 5$ and $D\subset\{0,1,\ldots,b-1\}$ with $\# D=b-1$, or let
$b=4$ and $D=\{0,1,2\}$ or $\{1,2,3\}$.
Then $\hat\kappa_1(\nu_{b,D})>1/2$.
\end{prop}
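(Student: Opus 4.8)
The plan is to reduce the problem to a concrete estimate on the Fourier coefficients of the missing-digit measure $\nu_{b,D}$, and then to bound the resulting $\ell^1$-sum by a ``transfer operator'' / subadditivity argument that can ultimately be made numerical. Since $\nu_{b,D}$ is a self-similar measure, its Fourier transform factorizes: writing $D = \{0,1,\dots,b-1\}\setminus\{d\}$ for the single missing digit $d$, and letting $\phi(\xi) = \frac{1}{b-1}\sum_{j\in D} e(-\xi j/b)$ be the (normalized) digit polynomial, one has
\[
\hat\nu_{b,D}(\xi) = \prod_{k=1}^{\infty} \phi(\xi/b^k).
\]
Because $\phi$ is $b$-periodic in a suitable sense, the partial products $P_n(\xi) = \prod_{k=1}^{n}\phi(\xi/b^k)$ control $|\hat\nu(\xi)|$ for $\xi$ of size $\asymp b^n$, and the key quantity governing $\hat\kappa_1$ is the growth rate of
\[
S_n = \sum_{\xi=0}^{b^n - 1} |P_n(\xi)| = \sum_{\xi=0}^{b^n-1} \prod_{k=1}^{n} \bigl|\phi(\xi/b^k)\bigr|.
\]
Indeed, $\hat\kappa_1(\nu) > 1/2$ is essentially equivalent to $S_n \ll b^{n/2 + o(n)}$ failing in the wrong direction — more precisely, to $S_n \ll b^{(1-s)n}$ for some $s > 1/2$, i.e. to the exponential growth rate $\lim \frac1n \log_b S_n$ being strictly less than $1/2$. (The precise passage between the dyadic-block sum $S_n$ and the full sum $\sum_{\xi=0}^{Q}|\hat\nu(\xi)|$ defining $\hat\kappa_1$ is the content of the upper/lower bound machinery promised for \S\ref{sec: l1 computation}; I would cite those once available and focus here on $S_n$.)

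The next step is to recognize $S_n$ as generated by a transfer operator and hence submultiplicative up to a bounded factor: expanding $\xi$ in base $b$ as $\xi = \sum_{i=0}^{n-1} \eps_i b^i$ with $\eps_i \in \{0,\dots,b-1\}$, each factor $|\phi(\xi/b^k)|$ depends on $\xi \bmod b^k$, so $S_{m+n}$ is comparable to a product over the ``low'' and ``high'' digit blocks, giving $S_{m+n} \le C\, S_m S_n$ for an explicit constant $C$. By Fekete's lemma the limit $\theta := \lim_n \frac1n \log_b S_n$ exists and equals $\inf_n \frac1n\log_b(C S_n)$; thus a single finite computation of $S_n$ for one value of $n$ yields a rigorous upper bound $\theta \le \frac1n\log_b(C S_n)$, and $\hat\kappa_1(\nu_{b,D}) \ge 1 - \theta$. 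So it suffices to exhibit, for each admissible $(b,D)$, some $n$ with $\frac1n\log_b(C S_n) < 1/2$. For large $b$ this can be done analytically: when only one digit is missing, $|\phi(\xi/b)| = \bigl|\frac{1}{b-1}(b\,\mathbb{1}[b\mid\xi] - e(-\xi d/b) \cdot (\text{geometric sum}))\bigr|$ is close to $1$ for $b\mid\xi$ and has a quantitatively small average over the nonzero residues — one shows $\frac1b\sum_{r=0}^{b-1}|\phi(r/b)| \le \tfrac12 - c$ for $b$ beyond an explicit threshold, and then iterating (together with the fact that the $k\ge 2$ factors only help) gives $\theta < 1/2$. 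For the finitely many remaining small cases ($b=5,6$ say, and $b=4$ with $D=\{0,1,2\},\{1,2,3\}$), I would compute $S_n$ for a moderate $n$ directly — using interval arithmetic to make the inequality $\frac1n\log_b(C S_n) < 1/2$ rigorous — which matches the paper's stated plan of a numerical \S\ref{sc:numeric}.

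The main obstacle, I expect, is twofold. First, getting the submultiplicativity constant $C$ small enough (or the averaging estimate $\frac1b\sum_r|\phi(r/b)| < 1/2 - c$ sharp enough) that the threshold on $b$ comes down to $b = 5$ rather than something like $b = 20$; this requires exploiting cancellation among the factors $|\phi(\xi/b^k)|$ for $k \ge 2$, not just the crude bound $|\phi| \le 1$, because for $b=5$ the single-scale average $\frac1b\sum_r |\phi(r/b)|$ is only barely below — or possibly not below — $1/2$, so one genuinely needs a two- or three-scale analysis (studying $\frac1{b^2}\sum_{\xi \bmod b^2} |\phi(\xi/b)\phi(\xi/b^2)|$, etc.). Second, handling the anomalous $b=4$ cases where $\#D = 3 = b-1$ but the averaging estimate is most delicate: here the missing digit is at an end ($0$ or $3$) versus the middle, and only the end cases are claimed, reflecting that $D = \{0,1,2\}$ behaves like a genuinely contracting system while $D=\{0,2,3\}$ or $\{1,3\}$-type configurations do not — so the proof must be sensitive enough to distinguish them, which is exactly why the small-$b$ part is done by explicit computation rather than a clean inequality.
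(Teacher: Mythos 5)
Your overall architecture (reduce to the growth rate of block sums of the partial products $\prod_k|\hat\mu_p(\xi/b^k)|$, handle large $b$ analytically via an exponential-sum average, and finish small $b$ by interval-arithmetic computation) is the same as the paper's. But the key reduction you propose has a genuine gap. You define $S_n$ as a sum over \emph{integer} frequencies $\xi\in\{0,\dots,b^n-1\}$ and claim $S_{m+n}\le C\,S_mS_n$ with a bounded constant, on the grounds that each factor depends only on a digit block. That is not true for the high factors: writing $\xi=\xi_1+b^n\xi_2$, for $k>n$ one has $\phi(\xi/b^k)=\phi\bigl(\xi_2/b^{k-n}+\xi_1/b^{k}\bigr)$, i.e.\ the high-block product is evaluated at points \emph{shifted} by an amount depending on the low digits. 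Since $|\phi|$ (the paper's $g$) has zeros, shifted and unshifted products are not comparable by any uniform constant; controlling exactly this shift effect is the hard technical content of the paper's \S\ref{sc:Riemann} (Proposition \ref{pr:convergence}), and even there the correction factor is $\exp(O(b^2L^{3/4}\log L))$ --- subexponential but unbounded --- not a constant $C$. So Fekete's lemma applied to your integer-point $S_n$ does not deliver the rigorous certificate ``one finite computation $\Rightarrow$ $\hat\kappa_1\ge 1-\theta$'' that your argument rests on, and deferring the passage between $S_n$ and $\hat\kappa_1$ to unspecified machinery leaves precisely the nontrivial step unproved.

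The paper avoids this by working with the shift-uniform quantity $\max_{x\in\R}b^{-L}\sum_{i=0}^{b^L-1}S_L(x+i/b^L)$: with the supremum over all real shifts the block induction closes cleanly (Lemma \ref{lm:lower}), giving the lower bound in Theorem \ref{th:bounds} for every fixed $L$, with no unbounded constants and no analysis of the zeros of $g$ needed. The price is that the finite verification must bound a maximum over a continuum, which the paper does by evaluating $F_L$ on a $\delta$-grid and adding the explicit Lipschitz correction \eqref{Lipschitz}, as in \eqref{eq:numeric}; your plan of computing $S_n$ at integer points with interval arithmetic controls rounding error but not the supremum over shifts, so it would not certify the claimed inequality even for a single $(b,D)$. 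The same issue affects your analytic step for large $b$: one needs a bound on $\max_x\sum_j g(x+j/b)$ uniform in the real shift $x$ (the paper's Lemma \ref{lm:expsum}), not merely on the average of $|\phi|$ over the rational points $r/b$. Your instincts about where multi-scale analysis is needed (small $b$, and the $b=4$ end-digit cases) do match the paper's eventual choices of $L$ in \S\ref{sc:numeric}, but as written the proposal's central submultiplicativity step would fail.
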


\begin{prop}\label{pr:kap1kap>12}
Let $b\ge 7$ and $D\subset\{0,1,\ldots,b-1\}$ with $\# D=b-1$.
Then
\[
\hat\kappa_1(\nu_{b,D})\dimH(\nu_{b,D})>1/2.
\]
\end{prop}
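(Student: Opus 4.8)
The plan is to combine the lower bound on $\hat\kappa_1(\nu_{b,D})$ from Proposition \ref{pr:kap1>12} with an explicit lower bound on $\dimH(\nu_{b,D}) = \frac{\log(b-1)}{\log b}$, and to show that the product exceeds $1/2$ for all $b \ge 7$. Since $\#D = b-1$, the Hausdorff dimension is $\kappa_b := \log(b-1)/\log b$, which is increasing in $b$ and already satisfies $\kappa_7 = \log 6/\log 7 \approx 0.9208$. So the crux is to obtain a lower bound on $\hat\kappa_1(\nu_{b,D})$ that, when multiplied by $\kappa_b$, stays above $1/2$. I would split into the analytic range (large $b$) and the numerical range (the finitely many remaining cases $b = 7, 8, \dots, b_0$ for some threshold $b_0$), mirroring the organisational split described in \S\ref{sc:analytic} and \S\ref{sc:numeric}.

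First, for the large-$b$ range, I would invoke the lower bound on the Fourier $\ell^1$ dimension of missing-digit measures proved in \S\ref{sec: l1 computation}. One expects an estimate of the shape $\hat\kappa_1(\nu_{b,D}) \ge 1 - O(\log\log b / \log b)$ or similar as $b \to \infty$ (the one-missing-digit measure is, in Fourier terms, close to Lebesgue measure on $[0,1]$ when $b$ is large, since its Fourier transform decays rapidly off a sparse set of frequencies). Combined with $\kappa_b \to 1$, the product tends to $1$, so there is an effective $b_0$ beyond which $\hat\kappa_1(\nu_{b,D})\kappa_b > 1/2$ for every admissible $D$. Crucially, the bound from \S\ref{sec: l1 computation} should be \emph{uniform} in the choice of which single digit is removed, or else depend on $D$ only through quantities one can bound uniformly; I would check that the key estimate there is stated with explicit constants so that $b_0$ is genuinely computable.

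Second, for each remaining $b$ with $7 \le b < b_0$ and each admissible digit set $D$ (there are $b$ choices of the missing digit, up to the symmetry $j \mapsto b-1-j$), I would run the algorithm from \S\ref{sec: l1 computation} that computes $\hat\kappa_1(\nu_{b,D})$ to arbitrary precision, certifying via interval arithmetic a lower bound $\hat\kappa_1(\nu_{b,D}) \ge c_{b,D}$ with $c_{b,D} \cdot \kappa_b > 1/2$. Since $\kappa_b \ge \kappa_7 > 0.92$, it suffices to certify $c_{b,D} > 1/(2\kappa_b) \le 1/(2\kappa_7) \approx 0.543$ in each case — a comfortable margin, given that Proposition \ref{pr:kap1>12} already guarantees $\hat\kappa_1(\nu_{b,D}) > 1/2$ and one expects the true value to be much closer to $1$ for these bases. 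This is the same finite computation underpinning Proposition \ref{pr:kap1>12}, merely read off with the stronger numerical threshold.

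The main obstacle I anticipate is \emph{uniformity and effectivity} of the large-$b$ lower bound on $\hat\kappa_1(\nu_{b,D})$: the Fourier $\ell^1$ dimension is a supremum of exponents $s$ for which a sum $\sum_{\xi\le Q}|\hat\nu(\xi)|$ grows like $Q^{1-s}$, and controlling this sum for missing-digit measures requires understanding cancellation in the infinite product $\hat\nu_{b,D}(\xi) = \prod_{i\ge 1}\widehat{\mu}(\xi/b^i)$ where $\mu$ is the single-scale digit distribution. The delicate point is that removing one digit makes $\hat\mu$ vanish to first order nowhere, but its modulus can still be close to $1$ near integers, so the decay of the product is governed by how often the base-$b$ digits of $\xi$ avoid a "bad" set — a combinatorial counting problem one must solve with bounds explicit in $b$. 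If the estimate in \S\ref{sec: l1 computation} is not quite uniform in $D$, a fallback is to enlarge the numerical range $b < b_0$ to absorb the non-uniform cases, which is harmless since each added base contributes only finitely many certified computations.
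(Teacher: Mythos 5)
Your proposal is correct and follows essentially the same route as the paper: an analytic lower bound on $\hat\kappa_1(\nu_{b,D})$ of the shape $\kappa_b - O(\log\log b/\log b)$, uniform in the missing digit (the paper's Lemma 5.2, deduced from the exponential-sum estimate and the $L=1$ case of Theorem \ref{th:bounds}), handles all $b\ge 112$, while the remaining cases $7\le b\le 111$ are certified numerically via Theorem \ref{th:bounds} with interval arithmetic at the threshold $\tau=1/(2\kappa_b)=\log b/(2\log(b-1))$, exactly as you suggest. Your observation that this is the same computation as for Proposition \ref{pr:kap1>12} with a stronger threshold matches the paper, which runs one verification proving both propositions simultaneously for $b=7,\ldots,111$.
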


In \S\ref{sec: l1 computation}, we give sequences of easily computable
lower and upper bounds for the Fourier $\ell^1$ dimension of missing-digit measures.
We also show that the lower and upper bounds converge to the same limit, which yields
an algorithm that allows us to compute $\hat\kappa_1(\nu)$ to arbitrary precision.
See Theorem \ref{th:bounds} for more details.
This result will also be used to prove
Propositions \ref{pr:kap1>12} and \ref{pr:kap1kap>12}, and we will also
deduce the following.

\begin{thm} \label{thm: l1 bound for special arrangement}
Let $b \ge 3$ be an integer, and let 
\[
D =  \{kd+a:k=0,\ldots,l-1\},
\]
for some integers
$a \ge 0$, $d \ge 1$ and $l \ge 2$.
Assume $D$ is a proper subset of $\{0,\ldots,b-1\}$. 
Let $\nu$ be the natural missing-digit measure on $K_{b,D}$. 
Then 
\[
\dimH(\nu)
\geq\hat\kappa_1(\nu)
\geq \dimH(\nu)
- \frac{\log(4+ \log 2l)}
{\log b}.
\]
\end{thm}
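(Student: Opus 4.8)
The plan is to estimate the Fourier coefficients $\widehat{\nu}(\xi)$ directly, exploiting the arithmetic-progression structure of $D$. Write $\# D = l$, and recall that the missing-digit measure factorizes as an infinite convolution
\[
\widehat{\nu}(\xi) = \prod_{j \ge 1} \phi(b^{-j} \xi), \qquad
\phi(t) = \frac{1}{l} \sum_{k=0}^{l-1} e(-(kd+a)t) = e(-at)\cdot \frac{1}{l}\,\frac{\sin(\pi l d t)}{\sin(\pi d t)}\, e(-\tfrac{\pi i (l-1)dt}{\,}),
\]
so that $|\phi(t)| = \bigl| \tfrac{1}{l}\, \frac{\sin(\pi l d t)}{\sin(\pi d t)}\bigr|$ is (up to scaling the variable by $d$) a Dirichlet-type kernel. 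The point is that this kernel is explicitly controllable: $|\phi(t)|$ depends only on $dt \bmod 1$, it decays like $1/(l\,\|dt\|)$ away from the integers, and one has clean bounds such as $|\phi(t)| \le \exp(-c\, l^2 \|dt\|^2)$ near the integers. These are precisely the ingredients already packaged in \S\ref{sec: l1 computation} as the transfer-operator / tree bounds behind Theorem~\ref{th:bounds}; I would invoke that machinery rather than redo it.

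First I would set up the dyadic sum $S(Q) = \sum_{\xi=0}^{Q} |\widehat{\nu}(\xi)|$ and, following the strategy of \S\ref{sec: l1 computation}, organize the frequencies $\xi \le Q \asymp b^n$ by their base-$b$ digit strings, so that $S(Q)$ is bounded by a sum over length-$n$ paths in a weighted tree whose edge weights are the values $|\phi(\cdot)|$ evaluated at the relevant shifted points. The upper bound of Theorem~\ref{th:bounds} then reads $\widehat{\kappa}_1(\nu) \ge 1 - \frac{1}{\log b}\log \Lambda$, where $\Lambda$ is (essentially) the spectral radius, equivalently the maximal row sum, of the associated $b \times b$ weight matrix $M$ whose entries are of the form $|\phi((i+\text{digit})/b^{\text{something}})|$. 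So the task reduces to the combinatorial one: \emph{bound $\max_i \sum_j M_{ij}$ from above}.

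The key step is the estimate
\[
\max_{t}\ \sum_{c=0}^{b-1} \Bigl| \phi\bigl(\tfrac{t+c}{b}\bigr)\Bigr| \ \le\ \frac{b}{l}\bigl(4 + \log(2l)\bigr),
\]
or whatever the precise normalization in \S\ref{sec: l1 computation} demands; once this is in hand, $\Lambda \le b^{1 - \dimH(\nu)}(4 + \log 2l)$ after accounting for $\dimH(\nu) = \frac{\log l}{\log b}$, and taking $\log_b$ yields exactly the claimed gap $\frac{\log(4+\log 2l)}{\log b}$. To prove this estimate, I would split the $c$-sum: for the $O(1)$ values of $c$ for which $\frac{t+c}{b}$ lies within $O(1/(ld))$ of an integer multiple of $1/d$, bound $|\phi| \le 1$ trivially; for the remaining terms use $|\phi(s)| \ll \frac{1}{l\,\|ds\|}$ and recognize the sum as a truncated harmonic series $\sum_{m=1}^{O(ld)} \frac{1}{m} \ll \log(l d)$ — but since $|\phi|$ only sees $ds \bmod 1$ and $d$ divides out in the counting (the values $\frac{d(t+c)}{b} \bmod 1$ as $c$ ranges are equally spaced with spacing $d/b$, giving at most $l$ "small-denominator" near-misses), the dependence collapses to $\log(2l)$, with the constant $4$ absorbing the boundary terms and the trivial contributions. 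The lower bound $\widehat{\kappa}_1(\nu) \le \dimH(\nu)$ is immediate since $\widehat{\kappa}_1 \le \widehat{\kappa}_2 = \dimH(\nu)$ by the monotonicity of $\widehat{\kappa}_t$ and the identity recorded in \S\ref{sec: fourier l1}.

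The main obstacle is the uniformity in $t$ (equivalently, proving the bound on $\Lambda$ as a genuine operator norm rather than at a single favorable point) together with getting the clean constant $4 + \log 2l$ rather than something like $C\log(l)$ with an uncontrolled $C$: one must be careful that the "near-integer" terms, where the Dirichlet kernel is large, never number more than a bounded amount regardless of how $t$ is positioned relative to the lattice $\frac{1}{d}\bZ$, and that the harmonic-series tail is summed with the correct endpoints. This is a matter of bookkeeping the $c$-sum against the arithmetic progression $D$, but it is the crux; everything else is an application of Theorem~\ref{th:bounds}.
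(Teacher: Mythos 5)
Your proposal is correct and follows essentially the same route as the paper: the paper likewise applies the $L=1$ case of the lower bound in Theorem \ref{th:bounds} together with $\hat\kappa_1(\nu)\le\hat\kappa_2(\nu)=\dimH(\nu)$, and your key row-sum estimate is exactly its Lemma \ref{lm:expsum}, proved by the same splitting of the Dirichlet kernel into trivially bounded near-singular terms plus a truncated harmonic tail, giving $b(1+\log 2l)+3l+2< b(4+\log 2l)$ since $l\le b-1$. The only point to flag is that this step uses $\gcd(d,b)=1$ (so that the points $d(x+j/b)$ are $1/b$-spaced modulo $1$), a hypothesis your sketch glosses over with the ``spacing $d/b$'' remark, just as the paper makes it explicit in Lemma \ref{lm:expsum} while omitting it from the theorem statement.
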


\section{Rationals in the support of AD-regular measures}\label{ADreg}

In this section, we establish Theorem \ref{thm: l1 to main}. 

\subsection{From counting rationals to intrinsic approximation}

The next lemma reduces part (b) of Theorem \ref{thm: l1 to main} to proving
part (a).

\begin{lemma} Let $\nu$ be a $\kap$-regular Borel probability measure supported on $K \subset \bR$, and let $\alp,\bet$ be positive constants such that $\alp > \bet/\kap$. If $\cN_K(T) \ll_{K,\bet} T^\bet$ then $\nu(W_K(\alp)) = 0$.
\end{lemma}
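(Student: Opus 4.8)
The plan is to use a Borel--Cantelli argument. Fix $\alp > \bet/\kap$, and recall that $W_K(\alp)$ consists of those $x \in \bR$ for which $|x - p/q| < q^{-\alp}$ has infinitely many solutions $(p,q) \in \bZ \times \bN$ with $p/q \in K$. It is enough to show $\nu(W_K(\alp) \cap K) = 0$, since $\nu$ is supported on $K$. First I would dyadically decompose: for each $n \ge 0$, let $\cR_n$ be the set of rationals $p/q \in K$ with $2^n \le q < 2^{n+1}$, and let
\[
E_n = \bigcup_{p/q \in \cR_n} B_{q^{-\alp}}(p/q).
\]
Then every $x \in W_K(\alp) \cap K$ lies in infinitely many of the sets $E_n$ (choosing for each solution $(p,q)$ the index $n$ with $2^n \le q < 2^{n+1}$, and noting that one may restrict to solutions with arbitrarily large $q$), so $W_K(\alp) \cap K \subseteq \limsup_n E_n$. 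By the Borel--Cantelli lemma it suffices to prove $\sum_n \nu(E_n) < \infty$.

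Next I would estimate $\nu(E_n)$. For $p/q \in \cR_n$, the ball $B_{q^{-\alp}}(p/q)$ has radius at most $2^{-n\alp}$, which tends to $0$, so for $n$ large the AD-regularity of $\nu$ applies and gives
\[
\nu\bigl(B_{q^{-\alp}}(p/q)\bigr) \le C (q^{-\alp})^{\kap} \le C \, 2^{-n\alp\kap}.
\]
Summing over $p/q \in \cR_n$ and using $\#\cR_n \le \cN_K(2^{n+1}) \ll_{K,\bet} 2^{(n+1)\bet} \ll 2^{n\bet}$ (each $p/q \in \cR_n$ has denominator $q \le 2^{n+1} =: T$, hence is counted by $\cN_K(T)$), we obtain
\[
\nu(E_n) \le \sum_{p/q \in \cR_n} \nu\bigl(B_{q^{-\alp}}(p/q)\bigr) \ll_{K,\bet} 2^{n\bet} \cdot 2^{-n\alp\kap} = 2^{-n(\alp\kap - \bet)}.
\]
Since $\alp > \bet/\kap$ is equivalent to $\alp\kap - \bet > 0$, this is the general term of a convergent geometric series, so $\sum_n \nu(E_n) < \infty$ and the Borel--Cantelli lemma finishes the proof.

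I expect the argument to be essentially routine; the only points requiring a little care are bookkeeping ones. First, one must make sure the AD-regularity bound is applied only at scales small enough for "sufficiently small $r$" in the definition to take effect — this is harmless because we only ever discard finitely many values of $n$, which does not affect convergence of the series. Second, one should note that the implied constant in $\cN_K(T) \ll_{K,\bet} T^\bet$ is uniform in $T$, so the bound $\#\cR_n \ll 2^{n\bet}$ holds with a constant independent of $n$; this is what makes the geometric series genuinely summable. No genuine obstacle arises: the content of the lemma is simply the observation that a power-saving-free counting bound of the shape $\cN_K(T) \ll T^\bet$, combined with AD-regularity of exponent $\kap$, forces the natural measure of the $\alp$-approximable set to vanish whenever $\alp\kap > \bet$.
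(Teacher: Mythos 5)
Your proposal is correct and follows essentially the same route as the paper: decompose $W_K(\alp)$ as a limsup of unions of intervals around rationals with denominator in a geometric range (the paper uses $e^n < q \le e^{n+1}$, you use $2^n \le q < 2^{n+1}$), bound the measure of each interval via $\kap$-regularity, count the intervals via $\cN_K$, and conclude by the first Borel--Cantelli lemma since $\alp\kap > \bet$ makes the series geometric. The bookkeeping points you flag (small scales, uniformity of the counting constant) are handled implicitly in the paper and cause no issue.
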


\begin{proof} Observe that
\[
W_K(\alp) = \limsup_{n \to \infty} A_n,
\]
where
\[
A_n = \bigcup_{\substack{
p/q \in K \\ e^n < q \le e^{n+1}}} 
\left( \frac p q \: - q^{-\alp}, \frac p q + q^{-\alp} \right) \qquad (n \in \bN).
\]
If $\cN_K(T) \ll T^\bet$, then, by the AD-regularity of $\nu$, we have
\[
\nu(A_n) \ll (e^{\bet-\kap \alp})^n,
\]
so
\[
\sum_{n=1}^\infty \nu(A_n) < \infty.
\]
By the first Borel--Cantelli lemma, we then have $\nu(W_K(\alp)) = 0$.
\end{proof}

\subsection{From Fourier \texorpdfstring{$\ell^1$}{l1} dimension to counting rationals}

Throughout this subsection, let $\nu$ be a $\kap$-regular Borel probability measure supported on a compact set $K \subset \mathbb{R}$, where $\kap < 1$.
We prove the following result, which completes the proof of Theorem \ref{thm: l1 to main}.

\begin{prp} \label{GeneralPrp} Suppose $\hat\kap_1(\nu) > 1/2$.
Then we have 
\[
\cN_K(T)\ll T^{2\kappa-\rho}
\]
for some $\varrho > 0$.
\end{prp}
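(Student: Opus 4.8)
The plan is to bound $\cN_K(T)$ by a weighted count of rationals $p/q \in K$ with $q \le T$, and to detect membership in (a neighbourhood of) $K$ using the Fourier coefficients of $\nu$. First I would fix a smooth bump function $\phi$ supported on a small interval around $0$, with $\phi \ge 1$ on a still smaller interval, so that for a rational $p/q \in K$ the sum $\sum_{x \in \text{rationals}} \phi(q(x - p/q))$ is well-controlled; alternatively, work directly with the counting measure $\mu_T = \sum_{p/q \in K, q \le T} \delta_{p/q}$ and estimate $\int \hat\mu_T \overline{\hat\nu}$ against a suitable kernel. The key analytic input is that a rational $p/q$ lies in $\supp(\nu) = K$ only if it is very close to the support, and AD-regularity lets us convert "$p/q$ at distance $\ll 1/q$ from $K$" into a lower bound $\nu(B_{1/q}(p/q)) \gg q^{-\kap}$. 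Summing this over all counted rationals gives
\[
\cN_K(T) \cdot T^{-\kap} \ll \sum_{p/q \in K, \, q \le T} \nu(B_{1/q}(p/q)),
\]
and the right-hand side is a quantity one expands in Fourier series: $\nu(B_{1/q}(p/q)) = \int \one_{B_{1/q}(p/q)} \d\nu$, and the indicator (smoothed) has Fourier expansion concentrated on frequencies $|\xi| \lesssim q$.

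Next I would carry out the Fourier expansion. Smoothing the ball indicators and using Poisson-type summation over the rationals with denominator exactly $q$, the diagonal term contributes the "expected" main term of size comparable to $T^{\kap}$ times the number of rationals, while the off-diagonal contribution is controlled by sums of the form $\sum_{q \le T} \sum_{1 \le |\xi| \le Cq} |\hat\nu(\xi)| \cdot (\text{something})$. This is exactly where $\hat\kap_1(\nu) > 1/2$ enters: by definition of Fourier $\ell^1$ dimension, $\sum_{|\xi| \le Q} |\hat\nu(\xi)| \ll Q^{1 - \hat\kap_1(\nu) + o(1)} = Q^{1/2 - \eta}$ for some fixed $\eta > 0$. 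Plugging this into the off-diagonal bound, summing over $q \le T$, and comparing with the trivial bound $\cN_K(T) \ll T^{2\kap}$ from \eqref{LowerBound}, one finds that the Fourier-detection argument improves the exponent by a positive amount proportional to $\eta$, i.e. $\cN_K(T) \ll T^{2\kap - \varrho}$ with $\varrho > 0$ effectively computable in terms of $\hat\kap_1(\nu) - 1/2$ and $\kap$.

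The main obstacle, and the step requiring the most care, is the bookkeeping of the off-diagonal Fourier terms: one must choose the smoothing scale and the cutoff on $|\xi|$ so that (i) the smoothed indicator genuinely detects the rationals in $K$ from below (this needs AD-regularity and the fact that counted rationals are $O(1/q)$-close to $K$), (ii) the tail of the Fourier series beyond the cutoff is negligible, and (iii) the resulting double sum over $q$ and $\xi$ can be estimated term-by-term using only the $\ell^1$ bound on $\hat\nu$, without losing the power saving to logarithmic or $\eps$-losses that would swamp $\varrho$. A secondary technical point is handling rationals in lowest terms versus all pairs $(p,q)$ with $q \le T$; grouping by the exact denominator and using that $\cN_K(T)$ counts each $x$ once (via its minimal denominator) keeps the combinatorics clean. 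I would also need $\kap < 1$ so that $T^{\kap}$ is genuinely smaller than $T$, ensuring the detection inequality above is not vacuous. Once these choices are pinned down, the estimate follows by a direct, if slightly intricate, computation, and the constant $\varrho$ is explicit.
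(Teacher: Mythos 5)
Your overall strategy -- detect each rational of $K$ by a small interval around it, use AD-regularity to bound the $\nu$-measure of that interval from below, and control the total measure of all such intervals via Parseval together with $\sum_{|\xi|\le Q}|\hat\nu(\xi)|\ll Q^{1-v}$ for some $v>1/2$ -- is the same as the paper's. But as written there is a genuine gap: the scale. Your key displayed inequality fixes the radius at $1/q$. To exploit it you must extend the sum over \emph{all} residues $p \bmod q$ (you do not know in advance which $p/q$ lie in $K$; that is what is being counted), and at radius $1/q$ the intervals $B_{1/q}(p/q)$, $p=0,\dots,q-1$, cover $[0,1]$ with bounded multiplicity, so the best available upper bound is $\sum_{q\le T}\sum_{p \bmod q}\nu\big(B_{1/q}(p/q)\big)\ll T$. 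Combined with your detection inequality this gives only $\cN_K(T)\ll T^{1+\kap}$, which is \emph{worse} than the trivial bound $T^{2\kap}$ since $\kap<1$; no choice of smoothing at this scale can recover a power saving. The alternative you mention, estimating $\int\hat\mu_T\overline{\hat\nu}$ with $\mu_T$ the counting measure of rationals in $K$, is circular without nontrivial a priori information on $\hat\mu_T$. Your closing remark that the smoothing scale and cutoff ``must be chosen with care'' gestures at the issue, but the correct choice is the entire content of the proof and is not supplied.

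The missing quantitative input, which is how the paper proceeds, is to work dyadically with $q\in[Q,2Q]$ and to shrink the radius to $\del/q$ with $\del=Q^{-v/(1-v)}$. Since $v>1/2$ this forces $\del<1/(8Q)$ for large $Q$, so the intervals of radius $\del/q$ around distinct rationals with denominators in $[Q,2Q]$ are pairwise disjoint (the spacing is at least $1/(4Q^2)>2\del/Q$), and AD-regularity converts the count into a measure: $\cN_K^*(2Q)\,(\del/Q)^{\kap}\ll\sum_{Q\le q\le 2Q}\nu(A(q,\del))$, where $A(q,\del)=\{x:\|qx\|\le\del\}$. Parseval gives $\nu(A(q,\del))\ll\del\sum_{q\mid\xi,\ |\xi|\le 2q/\del}|\hat\nu(\xi)|$; note the frequencies are restricted to multiples of $q$, not all $|\xi|\le Cq$ as in your sketch, and summing over $q$ introduces a divisor function $\tau(\xi)$ that is absorbed into the $\eps$-room in the definition of $\hat\kap_1$. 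One obtains $\sum_{Q\le q\le 2Q}\nu(A(q,\del))\ll\del\big(Q+(Q/\del)^{1-v}\big)\ll\del Q$ precisely when $\del\ge Q^{-v/(1-v)}$, whence $\cN_K^*(2Q)\ll\del^{1-\kap}Q^{1+\kap}=Q^{2\kap-(1-\kap)(2v-1)/(1-v)}$; the saving is positive exactly because $v>1/2$ permits $\del<1/Q$, and summing over dyadic blocks finishes the proof. Without identifying this balance between the radius $\del/q$, the Farey spacing $1/Q^2$, and the $\ell^1$ bound, the argument as you set it up cannot beat $T^{2\kap}$.
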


We begin with two lemmata.
For $q \in \bN$ and $\del \in (0,1/2)$, set
\[
A(q, \del) = \{ x \in \bR: \| q x \| \le \del \},
\]
where $\|\cdot\|$ denotes distance to the nearest integer.

\begin{lemma} Let $v$ be a number in the range $\hat\kap_1(\nu) > v > 1/2$. Then
\begin{equation} \label{average}
\sum_{Q \le q \le 2Q} \nu(A(q,\del)) \ll \del Q,
\end{equation}
for $Q \in \bN$ and $\del \ge Q^{-v/(1-v)}$.
\end{lemma}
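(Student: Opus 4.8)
The plan is to bound the left-hand side of \eqref{average} by writing $\nu(A(q,\del))$ as an integral of a suitable Fourier series against $\nu$, summing over $q$, and then using the hypothesis $v < \hat\kap_1(\nu)$ to control the resulting sum of Fourier coefficients. First I would choose a smooth (or Fejér-type) majorant: fix a nonnegative trigonometric polynomial $\phi = \phi_\del$ of degree $\asymp \del^{-1}$ with $\phi \ge \one_{[-\del,\del] \bmod 1}$ pointwise on $\bT$, $\widehat{\phi}(0) \ll \del$, and $|\widehat{\phi}(k)| \ll \del$ for all $k$ (the Fejér kernel rescaled, or a Beurling--Selberg majorant, does this). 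Then $\one_{A(q,\del)}(x) \le \phi(qx)$, so
\[
\nu(A(q,\del)) \le \int \phi(qx) \, \d\nu(x) = \sum_{|k| \le C/\del} \widehat{\phi}(k) \, \hat\nu(-kq).
\]
Summing over $Q \le q \le 2Q$ and separating the $k=0$ term (which contributes $\ll \del Q$ since $\widehat\phi(0) \ll \del$ and $|\hat\nu(0)| = 1$), it remains to show
\[
\sum_{1 \le |k| \le C/\del} \, \sum_{Q \le q \le 2Q} |\hat\nu(kq)| \ll \del Q.
\]

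The next step is to estimate the inner sum $\sum_{Q \le q \le 2Q} |\hat\nu(kq)|$. For fixed $k$, the integers $kq$ with $Q \le q \le 2Q$ lie in $[kQ, 2kQ]$ and are spaced $k$ apart, so this is a sum of $\asymp Q$ Fourier coefficients among the first $\asymp kQ$. I would bound it crudely by $\sum_{\xi=0}^{2kQ} |\hat\nu(\xi)|$, which by the definition of Fourier $\ell^1$ dimension and the choice $v < \hat\kap_1(\nu)$ is $\ll (kQ)^{1-v}$. Hence
\[
\sum_{1 \le |k| \le C/\del} \sum_{Q \le q \le 2Q} |\hat\nu(kq)| \ll Q^{1-v} \sum_{1 \le |k| \le C/\del} k^{1-v} \ll Q^{1-v} \cdot \del^{-(2-v)},
\]
using $\sum_{k \le N} k^{1-v} \ll N^{2-v}$ (valid since $1-v > -1$). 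For this to be $\ll \del Q$ we need $Q^{1-v} \del^{-(2-v)} \ll \del Q$, i.e. $\del^{3-v} \gg Q^{-v}$, i.e. $\del \gg Q^{-v/(3-v)}$. This is weaker than the claimed range $\del \ge Q^{-v/(1-v)}$, so the crude per-$k$ bound is lossy and must be refined.

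The main obstacle, then, is obtaining a bound on $\sum_{Q \le q \le 2Q} |\hat\nu(kq)|$ that genuinely exploits the arithmetic progression structure rather than dilating to the full interval $[0, 2kQ]$. The fix I would pursue: instead of splitting off individual frequencies $k$, keep the double sum together and reorganize by the value $m = kq$. Each $m \le 2kQ$ with $m$ of the form $kq$, $Q \le q \le 2Q$, is counted with multiplicity equal to the number of such factorizations, which is $O(d(m))$ on average $O(\log)$, or better, one can first sum in $k$ for fixed $q$: $\sum_{1 \le |k| \le C/\del} |\hat\nu(kq)| \le 2\sum_{\xi=0}^{Cq/\del} |\hat\nu(\xi)| \ll (q/\del)^{1-v} \ll (Q/\del)^{1-v}$ uniformly, and then summing over the $\asymp Q$ values of $q$ gives $\ll Q \cdot (Q/\del)^{1-v} = Q^{2-v} \del^{-(1-v)}$; requiring this $\ll \del Q$ forces $\del^{2-v} \gg Q^{1-v}$, worse still. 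The correct route is to apply the $\ell^1$-dimension bound to the \emph{combined} range: the frequencies $\{kq : 1 \le k \le C/\del,\ Q \le q \le 2Q\}$ all lie in $[0, 2CQ/\del]$, and summing $|\hat\nu|$ over this whole window gives $\ll (Q/\del)^{1-v}$; but each frequency appears with a multiplicity that I must show is, on average over the window, at most $O(\del^{-1} \cdot \tfrac{\text{window length}}{\text{window length}})$ — concretely, the number of pairs is $\asymp Q/\del$ and they lie in a window of length $\asymp Q/\del$, so if they were equidistributed each value would be hit $O(1)$ times and the total would be $\ll (Q/\del)^{1-v}$; comparing with $\del Q$ needs $(Q/\del)^{1-v} \ll \del Q$, i.e. $\del^{2-v} \gg Q^{-v}$, i.e. $\del \gg Q^{-v/(2-v)}$, still not quite the stated $Q^{-v/(1-v)}$. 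Closing this last gap is exactly where care is needed: I expect the authors gain the extra room either by a dyadic decomposition in $k$ combined with a Cauchy--Schwarz step that brings in $\hat\kap_2(\nu) = \kap$ (controlling the second moment of $\hat\nu$ over short windows), or by noting that for the application only $\del$ of the form $Q^{-v/(1-v)}$ up to constants is needed and feeding the first-moment bound through more carefully with the precise exponent bookkeeping. So: the routine part is the majorant and the term-by-term Fourier estimates; the crux is the combinatorial/analytic accounting of how often a given frequency is represented as $kq$ in the relevant ranges, and squeezing the exponent down to $v/(1-v)$, which I would attack via a dyadic splitting in $k$ and interpolation between the $\ell^1$ and $\ell^2$ Fourier dimension bounds.
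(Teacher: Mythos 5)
There is a genuine gap: your argument is not completed, and the obstacle you run into is an artifact of a bookkeeping slip rather than a real difficulty. After you choose the majorant with $\widehat{\phi}(0)\ll\del$ \emph{and} $|\widehat{\phi}(k)|\ll\del$ for all $k$, the factor $\del$ multiplies the entire off-zero sum, so after separating the $k=0$ term the correct reduced target is
\[
\sum_{1\le |k|\le C/\del}\ \sum_{Q\le q\le 2Q}|\hat\nu(kq)|\ \ll\ Q,
\]
not $\ll \del Q$ as you wrote. Losing that factor of $\del$ makes every subsequent exponent count come out short (your $Q^{-v/(3-v)}$ and $Q^{-v/(2-v)}$ ranges), and it is why you are led to speculate about Cauchy--Schwarz with $\hat\kap_2$, interpolation, or dyadic decompositions, none of which you carry out; as submitted the proof of the stated range $\del\ge Q^{-v/(1-v)}$ is missing.

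With the corrected target, the idea you mention in passing and then abandon --- reorganizing the double sum by the value $\xi=kq$ and controlling the multiplicity by the divisor function --- finishes the proof, and this is exactly what the paper does. Writing $\eps=\hat\kap_1(\nu)-v>0$, every relevant $\xi$ satisfies $0<|\xi|\le 4Q/\del$ and is hit at most $\tau(\xi)\ll_\eps (Q/\del)^{\eps/2}$ times, so
\[
\sum_{k,q}|\hat\nu(kq)|\ \le \sum_{0<|\xi|\le 4Q/\del}\tau(\xi)|\hat\nu(\xi)|\ \ll\ (Q/\del)^{\eps/2}\,(Q/\del)^{1-\hat\kap_1(\nu)+\eps/2}=(Q/\del)^{1-v},
\]
and $(Q/\del)^{1-v}\ll Q$ holds precisely when $\del\ge Q^{-v/(1-v)}$. (The paper phrases the majorant step as Parseval with a test function whose Fourier support consists of multiples of $q$ up to $2q/\del$, each coefficient $O(\del)$, citing \cite[Theorem 4.1]{Yu2021}; your Fej\'er/Selberg majorant is the same device.) So no second-moment or interpolation input is needed: the sub-polynomial divisor bound is absorbed into the room $\eps$ between $v$ and $\hat\kap_1(\nu)$, and the ``equidistribution'' heuristic you feared was insufficient is in fact rigorous up to $\xi^{o(1)}$ losses that do not affect the exponent.
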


\begin{proof}
We may assume that $Q$ is large. Put $\eps = \hat\kap_1(\nu) - v > 0$. 
For an upper bound, we may replace the indicator function of $A(q,\delta)$ by
\[
x \mapsto \sum_{a \in \bZ} \phi\left(
\frac{qx - a}{\del} \right).
\]
Here $\phi: [0,\infty)$ is a Schwartz function such that $\phi \ge 1$ on $[-1,1]$ and $\hat \phi$ is supported on $[-1,1]$, a construction of which is given in \cite[Example 3.2]{Mattila}. Applying Parseval's formula --- see \cite[Theorem 4.1]{Yu2021} for details --- gives
\[
\nu(A(q,\del)) \ll \del 
\sum_{\substack{|\xi| \le 2q/\del \\ \xi \equiv 0 \mmod q}}
|\hat{\nu}(\xi)|,
\]
and so
\begin{align*}
\sum_{Q \le q \le 2Q} \nu(A(q,\del)) 
&\ll \del \Big( Q|\hat\nu(0)|+
 \sum_{0<|\xi| \le 4Q/\del} \tau(\xi) |\hat{\nu}(\xi)|\Big)\\
 &\ll \del \Big( Q+ (Q/\del)^{\eps/2} \sum_{|\xi| \le 4Q/\del} |\hat{\nu}(\xi)| \Big)\\
& \ll \del ( Q+ (Q/\del)^{\eps + 1 - \hat\kap_1(\nu)})\\
&\ll \delta (Q+(Q/\delta)^{1-v}).
\end{align*}
Here $\tau(\xi)$ denotes the number of divisors of $\xi$.
We thus have \eqref{average} as long as $\del \ge Q^{-u}$, where
\[
u = \frac{1}{1-v} - 1
= \frac{v}{1-v}.
\]
\end{proof}

It is convenient to define
\[
\cN_{K}^*(T) = 
\# \{ (p,q) \in \bZ^2: T/2 \le q \le T, p/q \in K\}
\]
for $T \ge 3$, this being a dyadic variant of $\cN_{K}(T)$. 

\begin{lemma}\label{lma: intrinsic counting} Let $v$ be a number in the range $\hat\kap_1(\nu) > v > 1/2$. Then \[
\cN_{K}^*(2Q) \ll_v Q^E,
\]
where
\[
E = 2\kap - \frac{(1-\kap)(2v - 1)}{1-v}.
\]
\end{lemma}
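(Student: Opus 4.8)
The plan is to bound $\cN_K^*(2Q)$ by covering the rationals $p/q$ with $Q \le q \le 2Q$ using the sets $A(q,\del)$ from the previous lemma, and then balancing the AD-regularity of $\nu$ against the averaged bound \eqref{average}. First I would fix $v$ with $\hat\kap_1(\nu) > v > 1/2$ and choose a threshold $\del = \del(Q)$ of the form $Q^{-\lam}$ to be optimised at the end; by the hypothesis of the previous lemma we need $\lam \le v/(1-v)$, and \eqref{average} then gives $\sum_{Q \le q \le 2Q} \nu(A(q,\del)) \ll \del Q = Q^{1-\lam}$.

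Next, the key counting step. For each pair $(p,q)$ counted by $\cN_K^*(2Q)$ we have $p/q \in K = \supp(\nu)$, and the ball $B_\del(p/q)$ has $\nu$-mass $\asymp \del^\kap$ by AD-regularity. Moreover $B_\del(p/q) \subset A(q,2\del)$ (indeed if $\|qx\|$ is small and $|x - p/q| < \del$ then $\|qx\| \le q\del \le 2Q\del$, so more precisely one works with $A(q, 2Q\del)$ or rescales $\del$ appropriately — this is a harmless adjustment of constants and exponents that I would track carefully). Distinct rationals $p/q, p'/q'$ in this range are separated by at least $1/(qq') \gg Q^{-2}$, so as long as $\del \ll Q^{-2}$ the balls $B_\del(p/q)$ are pairwise disjoint. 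Summing $\nu$-masses over disjoint balls contained in $\bigcup_{q} A(q, \del')$ (with $\del'$ a bounded multiple of $Q\del$) yields
\[
\cN_K^*(2Q) \cdot \del^\kap \ll \sum_{Q \le q \le 2Q} \nu(A(q,\del')) \ll \del' Q \ll \del Q^2,
\]
hence $\cN_K^*(2Q) \ll \del^{1-\kap} Q^2 = Q^{2 - \lam(1-\kap)}$, valid whenever $\del = Q^{-\lam}$ with $\lam \ge 2$ (to ensure disjointness — note $\lam \ge 2 > 1$ is automatic, and one must simultaneously keep $\lam$ usable in \eqref{average}).

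The final step is to optimise $\lam$. The disjointness constraint forces $\lam \ge 2$ while \eqref{average} permits $\lam$ only up to $v/(1-v)$. Since $v > 1/2$ gives $v/(1-v) > 1$, but we also need $\lam \ge 2$, I would instead run the argument with $\del$ chosen to make the two bounds compatible: take $\lam = \frac{1}{1-v}$ (the largest exponent allowed, noting $\frac{1}{1-v} = 1 + \frac{v}{1-v}$, and rechecking that the separation of rationals still allows disjointness after absorbing the factor $Q$ — effectively the relevant scale in \eqref{average} is $\del' \asymp Q^{1-\lam}$, so disjointness needs $1 - \lam \le -2$, i.e. $\lam \ge 3$; if $v/(1-v) < 2$ one argues more crudely by splitting $A(q,\del)$ into $O(Q\del)$ unit-scale pieces). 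Carrying the arithmetic through, the exponent becomes
\[
E = 2 - \lam(1-\kap) \cdot(\text{correction}) = 2\kap - \frac{(1-\kap)(2v-1)}{1-v},
\]
matching the claimed value once the bookkeeping between $\del$ and $\del' \asymp Q\del$ is done correctly. \textbf{The main obstacle} I anticipate is exactly this bookkeeping: keeping track of the factor $Q$ relating the radius $\del$ of the balls around rationals and the parameter in $A(q,\del)$, ensuring the disjointness hypothesis and the admissibility hypothesis $\del \ge Q^{-v/(1-v)}$ of \eqref{average} are simultaneously satisfiable, and verifying that the optimal choice produces precisely the stated exponent $E = 2\kap - (1-\kap)(2v-1)/(1-v)$ rather than something weaker.
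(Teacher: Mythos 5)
Your plan is essentially the paper's own argument, just in a rescaled normalisation: the paper puts the interval of radius $\del/q$ around each $p/q$ (so it sits inside $A(q,\del)$ and has mass $\asymp(\del/Q)^\kap$) and takes $\del=Q^{-v/(1-v)}$, whereas you put a ball of radius $\del$ around $p/q$ (so it sits inside $A(q,2Q\del)$ and has mass $\asymp\del^\kap$) and take $\del=Q^{-1/(1-v)}$; substituting $\del\mapsto\del/Q$ turns one computation into the other, and your exponent $2-(1-\kap)/(1-v)$ is identically equal to $E=2\kap-(1-\kap)(2v-1)/(1-v)$, so the bookkeeping you flag does close with no correction. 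The one genuine slip in your write-up is the claim that disjointness forces $1-\lam\le-2$, i.e.\ $\lam\ge3$ (with a vague fallback for $v/(1-v)<2$): disjointness is only ever needed for the radius-$\del$ balls centred at the rationals themselves (indeed only among rationals sharing the same denominator $q$, since one bounds $\#\{p:p/q\in K\}\,\del^\kap\ll\nu(A(q,2Q\del))$ for each $q$ separately and then sums \emph{measures} over $q$, never the measure of a union of the $A(q,\del')$); the parameter $\del'\asymp Q\del$ of $A(q,\del')$ plays no role in any disjointness requirement. Thus all you need is $2\del<1/(4Q^2)$, which your choice $\del=Q^{-1/(1-v)}$ satisfies for large $Q$ because $1/(1-v)>2$ when $v>1/2$, exactly as in the paper where $\del=Q^{-v/(1-v)}<1/(8Q)$ guarantees the spacing $1/(4Q^2)>2\del/Q$. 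With that worry removed, your argument goes through verbatim and for all $v\in(1/2,1)$, with no case split at $v=2/3$.
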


\begin{proof}
We may assume that $Q$ is large in terms of $v$. Put
\[
\del = Q^{-v/(1-v)} < 1/(8Q).
\]
If $a_1/q_1 \ne a_2/q_2$ for some $a_1,a_2\in \Z$ and $Q \le q_1,q_2 \le 2Q$,
then
\[
|a_1/q_1-a_2/q_2|\ge 1/(q_1q_2)\ge1/(4Q^2)>2\delta/Q
\]
and hence the intervals
\[
\left(
\frac{a_1 - \del}{q_1},
\frac{a_1 + \del}{q_1}
\right),
\left(
\frac{a_2 - \del}{q_2},
\frac{a_2 + \del}{q_2}
\right)
\]
are disjoint.

By AD-regularity of $\nu$ and \eqref{average}, we now have
\[
\cN_{K}^*(2Q) (\del/Q)^\kap
\ll \sum_{Q \le q \le 2Q} \nu(A(q,\del))
 \ll \del Q,
\]
so
\[
\cN_{K}^*(2Q) \ll \del^{1-\kap} Q^{1+\kap}
= Q^{1+\kap - (1-\kap)v/(1-v)}.
\]
The exponent is
\[
1+\kap - \: 
\frac{(1-\kap)v}{1-v} 
= 2\kap - \: 
\frac{(1-\kap)(2v - 1)}{1-v}.
\]
\end{proof}

\begin{proof}[Proof of Proposition \ref{GeneralPrp}]
We have
\begin{align*}
\cN_{K}(T) &=\cN_{K}^*(T)+\cN_{K}^*(T/2)+\cN_{K}^*(T/4)+\ldots\\
&\ll T^E +(T/2)^E+(T/4)^E+\ldots\ll T^E
\end{align*}
by Lemma \ref{lma: intrinsic counting}.
Therefore, the proposition holds for any
\[
\varrho \le \frac{(1-\kap)(2v - 1)}{1-v},
\]
and the right hand side is positive.
\end{proof}

\section{Approximating Fourier \texorpdfstring{$\ell^1$}{l1} dimension}
\label{sec: l1 computation}

In this section, we approximate the Fourier $\ell^1$ dimension of missing-digit
measures by quantities that are easy to evaluate numerically.
This yields an algorithm to approximate $\hat\kappa_1(\nu)$ to arbitrary precision, and
it will also be the basis of all of our results about estimating the Fourier $\ell^1$
dimension of specific measures.

We begin by introducing some notation.
Let $b \ge 2$ be an integer, and let $p=(p_0,\ldots,p_{b-1})$ be a probability vector.
We allow some of the coordinates to vanish, but we assume that
$p\neq(b^{-1},\ldots,b^{-1})$.
Let $\nu$ be the associated missing-digit measure, that is the
distribution of the random variable
\[
\sum_{j=1}^{\infty} \xi_j b^{-j},
\]
where $\xi_j$ is a sequence of independent random variables
with distribution $p$.

Writing $\mu_p$ for the measure on $\{0,\ldots, b-1\}$ corresponding to $p$,
we have
\[
\hat\nu(x)=\prod_{j=1}^{\infty}\hat\mu_p(b^{-j}x),
\]
since the Fourier transform of the distribution of sums of independent random
variable is the product of the Fourier transform of the distributions of the terms.

Let $g:\R/\Z\to [0,1]$ be defined by
\begin{equation}\label{eq:g}
g(x)=|\hat\mu_p(x)|=\left|\sum_{j=0}^{b-1}p_j e(jx)\right|.
\end{equation}
Then
\begin{equation} \label{AbsExact}
|\hat \nu(x)|=\prod_{j=1}^{\infty} g(b^{-j}x).
\end{equation}

The purpose of this section is to prove the following two results.
The first result establishes that the Fourier $\ell^1$ dimension of the
self-similar measures we consider can be expressed as certain limits.

\begin{prp}\label{pr:limit-exists}
	With the above notation, the Fourier $\ell^1$ dimension of $\nu$ is equal to
	\begin{align*}
		\hat\kappa_1(\nu)=&\lim_{Q\to \infty}\frac{-\log \Big(Q^{-1}\sum_{n=0}^{Q-1} |\hat \nu(n)|\Big)}{\log Q}\\
		=&\lim_{N\to\infty} \frac{-\log\Big(\int_0^1 \prod_{j=0}^{N-1} g(b^j x) \d x\Big)}{\log b^N}.
	\end{align*}
\end{prp}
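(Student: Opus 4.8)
The plan is to prove the two displayed identities for $\hat\kappa_1(\nu)$ in turn, then show they agree. For the first, recall the definition
\[
\hat\kappa_1(\nu)=\sup\Big\{s\ge 0:\ \sum_{\xi=0}^{Q}|\hat\nu(\xi)|\ll_{\nu,s}Q^{1-s}\Big\}.
\]
Set $S(Q)=\sum_{n=0}^{Q-1}|\hat\nu(n)|$ and $L(Q)=-\log\big(Q^{-1}S(Q)\big)/\log Q$, so that $S(Q)=Q^{1-L(Q)}$. The content of the first identity is that $L(Q)$ actually converges, and that its limit equals the supremum $s$ above. The inequality $\limsup_{Q\to\infty}L(Q)\le\hat\kappa_1(\nu)$ is essentially immediate: if $\sum_{\xi=0}^{Q}|\hat\nu(\xi)|\ll Q^{1-s}$ for all $s<\hat\kappa_1(\nu)$ then $L(Q)\ge s-o(1)$ is false — wait, the correct direction is that for $s<\hat\kappa_1(\nu)$ we have $S(Q)\ll Q^{1-s}$, hence $Q^{1-L(Q)}\ll Q^{1-s}$, hence $L(Q)\ge s-o(1)$, giving $\liminf L(Q)\ge\hat\kappa_1(\nu)$. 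Conversely, if $s>\hat\kappa_1(\nu)$ then $S(Q)\ll Q^{1-s}$ fails, so there is a sequence $Q_k\to\infty$ with $S(Q_k)\ge Q_k^{1-s}$, i.e. $L(Q_k)\le s$; this only gives $\liminf L(Q)\le\hat\kappa_1(\nu)$, not the full $\limsup$. So the genuine work is to upgrade this to actual convergence, and the key input is a submultiplicativity / quasi-monotonicity property of $S(Q)$.

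The mechanism I would use is the self-similar structure \eqref{AbsExact}: $|\hat\nu(x)|=g(b^{-1}x)|\hat\nu(b^{-1}x)|$, together with the fact that $g$ is bounded away from $0$ and $\infty$ on any interval avoiding its (finitely many) zeros, and more usefully that the average of $|\hat\nu|$ over a full period of its "top-level" oscillation relates the scale-$Q$ sum to the scale-$Q/b$ sum. Concretely, I would first pass to the integral formulation: by a standard comparison between the discrete sum $Q^{-1}\sum_{n<Q}|\hat\nu(n)|$ and $\int_0^1|\hat\nu(Qt)|\,\d t$ (valid because $\hat\nu$ is a fixed bandlimited-type function times the rough factor $g$, so it has controlled variation at unit scale — one can dominate $|\hat\nu(n)|$ by an average of $|\hat\nu|$ over $[n-1/2,n+1/2]$ up to a bounded multiplicative constant using $|\hat\nu(x)-\hat\nu(y)|\ll|x-y|\cdot\|\hat\nu'\|_\infty$ on that interval, with $\hat\nu$ smooth once we truncate the product at a finite level), one shows
\[
Q^{-1}\sum_{n=0}^{Q-1}|\hat\nu(n)|\asymp\int_0^1|\hat\nu(Qt)|\,\d t
\]
up to constants independent of $Q$. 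Then, taking $Q=b^N$ and using \eqref{AbsExact},
\[
\int_0^1|\hat\nu(b^N t)|\,\d t=\int_0^1\prod_{j=1}^{\infty}g(b^{j-N}t)\,\d t
=\int_0^1\Big(\prod_{j=0}^{N-1}g(b^{j}u)\Big)|\hat\nu(u)|\,\d u
\]
after the substitution $u=b^{-N}\cdot(b^Nt)$ modulo $1$ and periodicity of $g$; since $|\hat\nu(u)|$ is bounded above and (away from $0$) below, this is $\asymp\int_0^1\prod_{j=0}^{N-1}g(b^ju)\,\d u$. This simultaneously identifies the $Q=b^N$ subsequence of the first limit with the quantity $I_N:=\int_0^1\prod_{j=0}^{N-1}g(b^jx)\,\d x$ appearing in the second limit.

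So it remains to prove (i) that $-\log I_N/(N\log b)$ converges, and (ii) that the limit along $Q=b^N$ governs the limit along all $Q$. For (i), the decisive observation is that the sequence $(I_N)$ is \emph{submultiplicative}: $I_{N+M}\le C\,I_N\,I_M$ for an absolute constant $C$. Indeed, writing $\prod_{j=0}^{N+M-1}g(b^jx)=\big(\prod_{j=0}^{N-1}g(b^jx)\big)\big(\prod_{j=0}^{M-1}g(b^j(b^Nx))\big)$ and integrating, one uses that $b^Nx\bmod 1$ equidistributes as $x$ ranges over short intervals; more precisely one splits $[0,1)$ into $b^N$ intervals of length $b^{-N}$ on each of which the first factor is roughly constant (controlled again by the derivative bound, which costs only a bounded constant) and the second factor, reparametrised, contributes $I_M$ from each. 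Fekete's subadditive lemma applied to $\log(CI_N)$ then yields $\lim_N -\log I_N/(N\log b)=\sup_N -\log(CI_N)/(N\log b)=:\kappa^\star$, and this is the common value. For (ii), for general $Q$ write $b^N\le Q<b^{N+1}$; using $I_{N+1}\ll\int_0^1|\hat\nu(Qt)|\,\d t\ll I_N$ (the inner inequalities coming from monotone-type comparison of the truncated products, as each extra factor $g(b^jx)\le1$ only shrinks the product, combined with a bounded-distortion change of variables to absorb the non-integer ratio $Q/b^N$), we sandwich the general-$Q$ average between consecutive $b^N$-averages and conclude $L(Q)\to\kappa^\star$. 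Putting everything together gives $\hat\kappa_1(\nu)=\kappa^\star$ and both displayed limits equal it.

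The main obstacle, I expect, is the bounded-distortion comparison between the discrete sum $\sum_{n<Q}|\hat\nu(n)|$, the integral $\int_0^1|\hat\nu(Qt)|\,\d t$, and the truncated products $I_N$, uniformly in $Q$ — in particular handling the zeros of $g$ (where $\hat\nu$ can vanish and nearby derivative bounds degenerate) and the non-dyadic ratio $Q/b^N$. Away from the zeros of $g$ everything is a clean application of mean value estimates, so the real care is a local analysis near each zero $x_0$ of $g$: there $g(x)\asymp|x-x_0|$ (or a higher power, if $\hat\mu_p$ has a higher-order zero, which one checks cannot happen for the digit sets in question, or else is handled by $g(x)\asymp|x-x_0|^m$), and one must verify that the contribution of a $b^{-N}$-neighbourhood of such a point to all three quantities matches up to constants; this is where I would spend most of the effort. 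The submultiplicativity step and the Fekete argument are then routine.
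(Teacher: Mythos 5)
Your skeleton (pass to the scale $Q=b^N$, relate discrete averages of $|\hat\nu|$ to $\int_0^1\prod_{j<N}g(b^jx)\,\d x$, get existence of the limit from an approximate multiplicativity in $N$, then identify the limit with the supremum defining $\hat\kappa_1$) is close in spirit to the paper's, and you correctly locate the danger in the zeros of $g$. However, the three comparisons you assert ``up to constants independent of $Q$'' are exactly the hard content, and the justifications you give do not work. (a) The Lipschitz bound $|\hat\nu(x)-\hat\nu(y)|\ll|x-y|$ yields only an \emph{additive} error $O(1)$ per unit interval, hence $O(Q)$ in total, which swamps $\sum_{n<Q}|\hat\nu(n)|\approx Q^{1-s}$; it cannot give a multiplicative comparison between the integer-point Riemann sum and $\int_0^1|\hat\nu(Qt)|\,\d t$, precisely because $|\hat\nu|$ has zeros and near-zeros where multiplicative control degenerates (and the integers are a very special grid for these self-similar products). (b) The claim that $|\hat\nu(u)|$ is bounded below away from $0$ on $[0,1]$ is false: for the middle-third Cantor measure $g(x)=|\cos 2\pi x|$ vanishes at $1/4$, so $\hat\nu(3/4)=0$, and $|\hat\nu|$ is arbitrarily small nearby; thus $\int_0^1\prod_{j<N}g(b^ju)\,|\hat\nu(u)|\,\d u\asymp\int_0^1\prod_{j<N}g(b^ju)\,\d u$ needs a genuine argument that the mass of the product is not concentrated near the zero set of $\hat\nu$. (c) For the same reason, $\prod_{j<N}g(b^jx)$ is \emph{not} roughly constant up to bounded factors on intervals of length $b^{-N}$ (it vanishes inside some of them), so the submultiplicativity $I_{N+M}\le C\,I_N I_M$ with a bounded constant is unproven, and the Fekete step does not get off the ground as stated.

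What is actually available, and what the paper proves, is a comparison of Riemann sums of $S_L$ at two different offsets with ratio only $\exp\big(O(L^{3/4}\log L)\big)$ — subexponential in $\log Q$, which suffices for the logarithmic limits but is far from $O(1)$. Obtaining even this requires the structural input of Lemma \ref{lm:f} (at most $b-2$ of the $b$ points $x+a/b$ can be $c_0/b$-close to the zero set; logarithmic-derivative control of the non-vanishing factor $f$) and the decomposition of $[0,1)$ into intervals meeting few versus many zero-orbits ($\cJ^{(1)}$ versus $\cJ^{(2)}$), i.e.\ Proposition \ref{pr:convergence}. The paper also sidesteps your issue (b) by working with frequencies in the symmetric range $[-b^N/2,b^N/2]$, where the tail product is bounded below by item (3) of Lemma \ref{lm:f} (Lemma \ref{lm:cutoff}), and it avoids Fekete entirely via an exact sandwich coming from the $b^{-L}$-periodicity of $\prod_{j\ge1}S_L(b^{jL}x)$ together with the convergence of the max/min bounds in Theorem \ref{th:bounds}. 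So, as written, your proposal asserts bounded-constant comparisons that are false or unavailable, and the central mechanism (the offset-comparison with quantified subexponential error and zero-avoidance counting) is missing; if you relax all your $\asymp$'s to ratios of size $\exp(o(\log Q))$ and supply that mechanism, your outline can be completed along essentially the paper's lines.
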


The second result provides a way to approximate the Fourier $\ell^1$ dimension.
We introduce the notation 
\begin{equation}\label{eq:SL}
S_L(x)=\prod_{j=0}^{L-1} g(b^{j}x).
\end{equation}

\begin{thm}\label{th:bounds}
	With the above notation, for all $L \in \bN$, we have
	\begin{align*}
		&\frac{-\log\Big(\max_{x}b^{-L}\sum_{i=0}^{b^L-1} S_L(x+i/b^L)\Big)}{\log b^L}\\
		&\qquad\qquad\qquad\qquad\le \hat\kappa_1(\nu)\\
		&\qquad\qquad\qquad\qquad\le \frac{-\log\Big(\min_{x}b^{-L}\sum_{i=0}^{b^L-1} S_L(x+i/b^L)\Big)}{\log b^L}.
	\end{align*}
	Moreover, the above upper and lower bounds converge to the same limit as $L\to\infty$.
\end{thm}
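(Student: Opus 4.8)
The plan is to exploit the self-similar structure encoded in \eqref{AbsExact}, namely the identity $|\hat\nu(x)| = S_L(x)\cdot|\hat\nu(b^{-L}x)|$ combined with the periodicity $g(x+1)=g(x)$. Set $F_L(x) = b^{-L}\sum_{i=0}^{b^L-1} S_L(x+i/b^L)$, which is a $b^{-L}$-periodic continuous function of $x$. The first task is to relate $F_L$ to the averages $Q^{-1}\sum_{n<Q}|\hat\nu(n)|$ appearing in Proposition \ref{pr:limit-exists}. Writing $Q = b^{NL}$ for the moment and grouping the integers $n\in\{0,\dots,b^{NL}-1\}$ according to their residue modulo $b^L$, one can telescope: evaluating the product $\prod_{j=0}^{NL-1}g(b^jx)$ at $x=n/b^{NL}$ and splitting the index $j$ into blocks of length $L$ expresses the full average as an $N$-fold iterated average, each step of which is an application of the operator that sends a function $h$ to its ``$b^L$-fold average'' $x\mapsto b^{-L}\sum_i h(x+i/b^L)$. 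Because $g$ itself is obtained from $\hat\mu_p$ this operator is a Markov-type averaging operator, and $F_L$ is precisely its action on $S_L$.

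Next I would prove the two-sided bound for a single scale. Iterating the averaging operator $N$ times starting from the constant function $1$, one obtains a quantity which on the one hand equals $b^{-NL}\sum_{n<b^{NL}}|\hat\nu(n)|$ (up to boundary terms that are negligible in the logarithmic limit, handled as in \cite[Theorem 4.1]{Yu2021}), and on the other hand is sandwiched between $(\min_x F_L(x))^N$ and $(\max_x F_L(x))^N$, because each averaging step multiplies the running function pointwise by a factor lying between those extremes — here one uses that $S_{(m+1)L}(x) = S_L(x)\, S_{mL}(b^L x)$ so that the $(m{+}1)$st average is the $L$-average of $S_L$ times something already pinned between the $m$th powers. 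Taking $-\log(\cdot)/\log b^{NL}$, letting $N\to\infty$, and invoking Proposition \ref{pr:limit-exists} to identify the limit with $\hat\kappa_1(\nu)$, yields
\[
\frac{-\log \max_x F_L(x)}{\log b^L} \le \hat\kappa_1(\nu) \le \frac{-\log \min_x F_L(x)}{\log b^L},
\]
which is the stated inequality.

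For the convergence of the bounds as $L\to\infty$, the key observation is a sub/super-multiplicativity in $L$: from $F_{L+L'}$ being obtainable by first $L$-averaging and then $L'$-averaging (in an appropriate sense) one gets $\max_x F_{L+L'}(x) \le \max_x F_L(x)\cdot \max_x F_{L'}(x)$ and the reverse inequality for the minima. By Fekete's lemma both $-\frac1L\log\max_x F_L(x)$ and $-\frac1L\log\min_x F_L(x)$ converge, and since the true value $\hat\kappa_1(\nu)\log b$ is trapped between them for every $L$, all three coincide in the limit. The main obstacle I anticipate is the bookkeeping that makes the telescoping precise: one must carefully match the discrete average over $n<Q$ (which forces $Q$ to be a power of $b^L$, with a routine but nontrivial argument that intermediate $Q$ contribute nothing in the log scale) with the continuous ``$b^L$-average'' operator, and must control the boundary/error terms from replacing the sharp cutoff in $\hat\kappa_1$ by a smooth one. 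The spectral-gap-free nature of the argument — we only need the crude pointwise bounds $\min_x F_L \le (\text{iterated average}) \le \max_x F_L$, not mixing rates — is what keeps this elementary.
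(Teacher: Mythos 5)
Your proof of the two-sided inequality is essentially the paper's own argument: writing $F_L(x)=b^{-L}\sum_{i=0}^{b^L-1}S_L(x+i/b^L)$, the $b^{-L}$-periodicity of $\prod_{j\ge 1}S_L(b^{jL}x)$ gives, by induction on $N$, the sandwich
\[
\Big(\min_x F_L(x)\Big)^N\le\int_0^1\prod_{j=0}^{N-1}S_L(b^{jL}x)\d x\le\Big(\max_x F_L(x)\Big)^N,
\]
and combining this with Proposition \ref{pr:limit-exists} (which the paper also treats as a separate input at this point) yields the stated bounds. So that half is fine.

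The ``moreover'' clause, however, has a genuine gap. Your sub/super-multiplicativity claims $\max_x F_{L+L'}\le\max_x F_L\cdot\max_x F_{L'}$ and $\min_x F_{L+L'}\ge\min_x F_L\cdot\min_x F_{L'}$ are correct, and Fekete's lemma does give that $-\tfrac1L\log\max_x F_L$ and $-\tfrac1L\log\min_x F_L$ each converge. But your final inference --- that because $\hat\kappa_1(\nu)\log b$ is trapped between them for every $L$, ``all three coincide in the limit'' --- does not follow: the sandwich only shows that the two limits bracket $\hat\kappa_1(\nu)\log b$, and it is perfectly consistent with the max-based limit being strictly smaller than the min-based one. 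Closing this gap is exactly the hard part of the theorem, and it is sensitive to the zeros of $g$ (the paper warns about this explicitly at the end of \S\ref{sc:lower}): a priori $\min_x F_L$ could be superexponentially small compared with $\max_x F_L$, because for some offsets $x$ every sample point $x+i/b^L$ might lie very close to zeros of many factors $g(b^j\cdot)$. What is needed is a quantitative comparison of the two Riemann sums at different offsets, namely Proposition \ref{pr:convergence}, which bounds the ratio by $4b\exp\big(b^2L^{3/4}(C_1+\log L+\log c_0^{-1}+2b)\big)$, i.e.\ subexponentially in $L$, so that the upper and lower bounds in Theorem \ref{th:bounds} differ by $O(L^{-1/4})$. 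Its proof occupies most of \S\ref{sc:Riemann} and relies on the structure of the zero set $Z$ (in particular item (4) of Lemma \ref{lm:f}) and the decomposition into the interval families $\cJ^{(1)}$ and $\cJ^{(2)}$. Your proposal offers no substitute for this step, so the convergence statement remains unproved.
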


\noindent Proposition \ref{pr:convergence} below provides an explicit estimate
for the difference between the upper and lower bounds.

The rest of this section is devoted to the proof of the above two results.
First we give a short direct proof of the lower bound in Theorem 4.2
in Section \ref{sc:lower}, and then
we present the full proof of both results starting in Section \ref{sc:Riemann}.
The full proofs will not rely on Section \ref{sc:lower} in any way, and the reader
may skip that section.
Our reasons for presenting a separate proof of the lower bound are twofold.
First, it allows us to present the main ideas without the technicalities of the full proofs.
Second, Propositions \ref{pr:kap1>12} and \ref{pr:kap1kap>12} and therefore the
main results of the paper only rely on this lower bound.

%%%%%%%%%%%%%%%%%%%%%%%%%%%%%%%%%%%%%%%%%%%%%%%%%%%%%%%%%%%%%%%%%%%%%%%%%%%%%%%%%%%%%%%%%%
\subsection{Lower bound}\label{sc:lower}
%%%%%%%%%%%%%%%%%%%%%%%%%%%%%%%%%%%%%%%%%%%%%%%%%%%%%%%%%%%%%%%%%%%%%%%%%%%%%%%%%%%%%%%%%%

The purpose of this section is to give a short direct proof of
the lower bound
\[
	\frac{-\log\Big(\max_{x}b^{-L}\sum_{i=0}^{b^L-1} S_L(x+i/b^L)\Big)}{\log b^L}
	\le \hat\kappa_1(\nu)
\]
claimed in Theorem \ref{th:bounds}.

We first show that we can assume without loss of generality that $L=1$.
Indeed, if $L>1$, we can replace $b$ by $b^L$ and the probability vector
$p$ by the distribution of 
\[
\sum_{j=0}^{L-1}\xi_j b^j,
\]
where the $\xi_j$ are independent random variables distributed according to $p$.
This leaves the missing-digit measure $\nu$ unchanged, and it replaces $S_L$
by $S_1=g$.

From now on we assume $L=1$.
By the definition of $\hat\kappa_1$, it is enough to prove the following.

\begin{lemma}\label{lm:lower}
Let
\[
s=\frac{-\log\Big(\max_{x}b^{-1}\sum_{i=0}^{b-1} g(x+i/b)\Big)}{\log b}.
\]
Then
\[
\sum_{\xi=0}^{Q}|\hat\nu(\xi)|\ll Q^{1-s}
\]
for all $Q\in\Z_{>0}$.
\end{lemma}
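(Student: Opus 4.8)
The plan is to estimate the partial sum $\sum_{\xi=0}^Q |\hat\nu(\xi)|$ by exploiting the product structure \eqref{AbsExact}, $|\hat\nu(\xi)| = \prod_{j\ge 1} g(b^{-j}\xi)$, and peeling off factors one base-$b$ digit at a time. First I would set up the key one-step inequality: for any $N\ge 1$,
\[
\sum_{\xi=0}^{b^N-1} \prod_{j=1}^{N} g(b^{-j}\xi)
= \sum_{\eta=0}^{b^{N-1}-1} \prod_{j=1}^{N-1} g(b^{-j}\eta) \sum_{i=0}^{b-1} g\!\left(\frac{\eta + i b^{N-1}}{b^N}\right),
\]
where I have written $\xi = \eta + i b^{N-1}$ with $0\le \eta < b^{N-1}$ and $0\le i < b$, and used that $g$ is $1$-periodic so that $g(b^{-j}\xi) = g(b^{-j}\eta)$ for $j \le N-1$ (since $b^{-j}(\xi-\eta) = i b^{N-1-j} \in \Z$). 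The inner sum over $i$ is exactly $\sum_{i=0}^{b-1} g(x + i/b)$ evaluated at $x = \eta/b^N$, which by the definition of $s$ is at most $b \cdot b^{-s} = b^{1-s}$. Hence
\[
\sum_{\xi=0}^{b^N-1} \prod_{j=1}^{N} g(b^{-j}\xi) \le b^{1-s}\sum_{\eta=0}^{b^{N-1}-1}\prod_{j=1}^{N-1} g(b^{-j}\eta).
\]
Iterating this from $N$ down to $0$ (the empty product at $N=0$ being the single term $\xi=0$ contributing $1$) gives $\sum_{\xi=0}^{b^N-1}|\hat\nu(\xi)| \le b^{(1-s)N} = (b^N)^{1-s}$, using that each factor $g(b^{-j}\xi) \le 1$ so truncating the infinite product only increases things.

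To pass from $Q = b^N$ to general $Q$, I would choose $N$ with $b^{N-1} \le Q < b^N$; then $\sum_{\xi=0}^Q |\hat\nu(\xi)| \le \sum_{\xi=0}^{b^N-1}|\hat\nu(\xi)| \le (b^N)^{1-s} \le b^{1-s} Q^{1-s} \ll Q^{1-s}$, since $1-s \ge 0$ (note $s \le 1$ because the average of the nonnegative function $g$ over a full set of residues is at most $\max g \le 1$, in fact one checks $s\ge 0$ from $g\le 1$ so $0 \le s\le 1$; only $1-s\ge 0$ is needed here, and if $s>1$ the bound $Q^{1-s}\ll 1$ is even easier). The implied constant depends only on $b$ and $s$, as required.

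The only genuinely delicate point is the bookkeeping in the digit decomposition — making sure the periodicity of $g$ is applied correctly so that the factors $g(b^{-j}\xi)$ for $j \le N-1$ genuinely depend only on $\eta$ and factor out of the sum over $i$, and that the residues $\{\eta/b^N + i/b : 0\le i < b\}$ are exactly of the form $\{x + i/b\}$ appearing in the definition of $s$. Everything else is a clean induction on $N$ together with monotonicity ($g \le 1$) to handle the tail of the infinite product and the reduction from arbitrary $Q$ to powers of $b$. I do not expect any analytic difficulty; the content is entirely in the self-similar combinatorial identity, which is why the authors single this out as a short, self-contained argument sufficient for Propositions \ref{pr:kap1>12} and \ref{pr:kap1kap>12}.
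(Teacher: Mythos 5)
Your argument is correct and is essentially the paper's own proof: both proceed by peeling off one base-$b$ digit at a time and iterating the bound $\max_x\sum_{i=0}^{b-1}g(x+i/b)=b^{1-s}$, the only cosmetic difference being that you peel the most significant digit of the truncated product $\prod_{j=1}^{N}g(b^{-j}\xi)$ and invoke $g\le 1$ once at the end, whereas the paper peels the least significant digit of the full product $|\hat\nu(\xi)|$ and compensates by proving the shifted statement $\sum_{\xi=0}^{b^N-1}|\hat\nu(y+\xi)|\le b^{N(1-s)}$ for all real $y$ by induction. One harmless slip: your parenthetical justification of $s\le 1$ is backwards ($g\le 1$ gives $s\ge 0$; $s\le 1$ follows instead from $\max_x\sum_{i}g(x+i/b)\ge g(0)=1$), but as you note only the comparison between $(b^N)^{1-s}$ and $Q^{1-s}$ is needed there, and it holds either way.
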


\begin{proof}
We observe that from the claim for $Q=b^N-1$, we can deduce
it for all $Q\in[b^{N-1},b^N)$, so it is enough to prove it
for $Q=b^N-1$ for $N\in\Z_{>0}$.

We prove the following inequality by induction on $N$.
For $N\in\Z_{\ge0}$ and $y\in\R$, we have
\[
\sum_{\xi=0}^{b^N-1}|\hat\nu(y+\xi)|
\le \Big(\max_x \sum_{\xi=0}^{b-1} g(x+\xi/b)\Big)^N
=b^{N(1-s)}.
\]
From this the lemma follows.

For $N=0$, the claim follows from the fact that $|\hat\nu(y)|\le 1$ for all $y$.
We assume that $N>0$, and the claim holds for $N-1$.
By \eqref{AbsExact}, we have
\[
|\hat\nu(\xi)|=g(b^{-1}\xi)|\hat\nu(b^{-1}\xi)|.
\]
Writing $\xi=\xi_1+b\xi_2$,
we have
\[
\sum_{\xi=0}^{b^N-1}|\hat\nu(y+\xi)|
\le\sum_{\xi_1=0}^{b-1}\sum_{\xi_2=0}^{b^{N-1}-1}g(b^{-1}y+b^{-1}\xi_1+\xi_2)
|\hat\nu(b^{-1}y+b^{-1}\xi_1+\xi_2)|.
\]
Using that $g$ is 
$\Z$-periodic, we can write
\[
\sum_{\xi=0}^{b^N-1}|\hat\nu(y+\xi)|
\le\sum_{\xi_1=0}^{b-1}g(b^{-1}y+b^{-1}\xi_1)
\sum_{\xi_2=0}^{b^{N-1}-1}|\hat\nu(b^{-1}y+b^{-1}\xi_1+\xi_2)|.
\]
By the induction hypothesis, we now have
\begin{align*}
\sum_{\xi=0}^{b^N-1}|\hat\nu(y+\xi)|
&\le\sum_{\xi_1=0}^{b-1}g(b^{-1}y+b^{-1}\xi_1)
\Big(\max_x \sum_{\xi=0}^{b-1} g(x+\xi/b)\Big)^{N-1}\\
&\le \Big(\max_x \sum_{\xi=0}^{b-1} g(x+\xi/b)\Big)^{N}.
\end{align*}
This completes the proof.
\end{proof}

Observe that the same proof cannot be used to prove the upper bound in
Theorem \ref{th:bounds}.
Indeed, the initial step of the induction would require the bound $|\hat\nu(y)|\ge 1$,
which is not true for all $y$.
The proof of the upper bound and the fact that the two bounds converge to the same limit
as $L\to \infty$ is more subtle and it is affected by the location of the zeros of the
function $g$.

%%%%%%%%%%%%%%%%%%%%%%%%%%%%%%%%%%%%%%%%%%%%%%%%%%%%%%%%%%%%%%%%%%%%%%%%%%%%%%%%%%%%%%%%%%
\subsection{Comparing two Riemann sums}\label{sc:Riemann}
%%%%%%%%%%%%%%%%%%%%%%%%%%%%%%%%%%%%%%%%%%%%%%%%%%%%%%%%%%%%%%%%%%%%%%%%%%%%%%%%%%%%%%%%%%

In what follows we prove Proposition \ref{pr:limit-exists} and Theorem \ref{th:bounds}
without using anything from Subsection~\ref{sc:lower}.
We begin with some preparations.
The purpose of this section is to prove a result
estimating the ratio of two Riemann sums of the function $S_L$.
This will be key for the proofs of both Proposition~\ref{pr:limit-exists}
and Theorem \ref{th:bounds}.

Before stating this, we record some simple properties of the function $g$, which we will
use repeatedly.
We denote by $Z$ the multiset of zeros of $g$.
We define the functions $f$ and $P$ implicitly by
\[
g(x)=f(x)\prod_{z\in Z}\dist(x,z)=f(x)P(x).
\]
Here $\dist(\cdot)$ refers to the natural distance in $\R/\Z$. We regard $b$ and $p$ as fixed, so the constants $c_0$, $C_0$, $C_1$ and $C_2$ below are allowed to depend on them.

\begin{lem}\label{lm:f}
The following hold:
\begin{enumerate}
\item $g(0)=1$,
\item $|Z|\le b-1$,
\item there is a constant $C_0>0$ such that $g(x) \ge \exp(-C_0 x^2)$ for all $x\in[-1/(2b),1/(2b)]$,
\item there is a constant $c_0 \in (0,1/2]$ such that
for all $x\in\R/\Z$, there are at most $b-2$ values of $a \in \{0,\ldots,b-1\}$
such that $\dist(x+a/b,Z)\le c_0/b$,
\item $f'$ exists and is continuous apart from finitely many singularities, where
$f'$ has a jump,
\item there is a constant $C_1$ such that $|f'(x)|/f(x)\le C_1$ for all $x\in\R/\Z$,
\item there is a constant $C_2$ such that $f(x) \le  C_2$ for all 
$x \in \bR / \bZ$.
\end{enumerate}
\end{lem}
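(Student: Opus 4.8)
The plan is to work throughout from the identity $\hat\mu_p(x)=\Phi(e(x))$, where $\Phi(T)=\sum_{j=0}^{b-1}p_jT^j$ is a polynomial with real, non-negative coefficients, with $\Phi(1)=\sum_jp_j=1$ and $\deg\Phi\le b-1$. Item (1) is then immediate, since $g(0)=|\Phi(1)|=1$. For (2), I note that $g(x)=0$ exactly when $e(x)$ is a root of $\Phi$; as $\Phi$ is not identically zero (because $\Phi(1)=1$) it has at most $\deg\Phi\le b-1$ roots with multiplicity, and since $x\mapsto e(x)$ is a local biholomorphism, $g$ vanishes to order $m$ at a zero $z_0$ precisely when $\Phi$ has a root of order $m$ at $e(z_0)$ — concretely $g(x)=|x-z_0|^m|h(x)|$ near $z_0$ with $h$ real-analytic and non-vanishing. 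Hence $|Z|$, the total order of vanishing of $g$ on $\R/\Z$, is at most $b-1$.

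For (3) the key point is that $g$ has no zero in $[-1/(2b),1/(2b)]$. Indeed $g$ is even (as $\overline{\hat\mu_p(x)}=\hat\mu_p(-x)$ for real $x$), $g(0)=1$, and for $x\in(0,1/(2b)]$ one has $\Im\hat\mu_p(x)=\sum_{j=1}^{b-1}p_j\sin(2\pi jx)$, a sum of non-negative terms that is strictly positive unless $p$ is a point mass at $0$ (in which case $g\equiv1$), since $2\pi jx\in(0,\pi)$ for $1\le j\le b-1$. So $g$ is continuous and strictly positive on the compact interval $[-1/(2b),1/(2b)]$, whence $g=\sqrt{|\hat\mu_p|^2}$ is real-analytic and $\log g$ is $C^2$ there. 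Since $\log g(0)=0$ and $(\log g)'(0)=\Re\hat\mu_p'(0)=\Re\bigl(2\pi i\sum_jjp_j\bigr)=0$, a second-order Taylor expansion with Lagrange remainder gives $\log g(x)\ge -C_0x^2$ on the interval, with $C_0=\tfrac12\sup_{|x|\le1/(2b)}|(\log g)''(x)|$.

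I expect (4) to be the main obstacle, and I would prove it by compactness and rigidity. If no admissible $c_0$ existed, then taking $c_0=1/n\to0$ and using compactness of $\R/\Z$ together with the finiteness of the subsets of $\{0,\dots,b-1\}$, I would extract a point $x^*$ and a set $A$ with $|A|\ge b-1$ such that $x^*+a/b\in Z$ for every $a\in A$. Together with $|Z|\le b-1$ this forces $\Phi$ to have exactly $b-1$ roots, all simple, namely the points $e(x^*)\omega^a$ $(a\in A)$ with $\omega=e(1/b)$; writing $u$ for the one remaining point $e(x^*)\omega^{a^*}$, the factorisation $\prod_{a=0}^{b-1}(T-e(x^*)\omega^a)=T^b-u^b$ gives
\[
\Phi(T)=p_{b-1}\,\frac{T^b-u^b}{T-u}=p_{b-1}\sum_{k=0}^{b-1}u^{b-1-k}T^k.
\]
Matching coefficients shows $u=p_{b-2}/p_{b-1}$ is a non-negative real of modulus $1$, so $u=1$, whence all $p_k$ coincide — contradicting $p\neq(b^{-1},\dots,b^{-1})$. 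Hence some $c_0\in(0,1/2]$ works.

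Finally, (5)–(7) are local regularity statements about $f=g/P$, where $P(x)=\prod_{z\in Z}\dist(x,z)$. On $\R/\Z\setminus(Z\cup(Z+\tfrac12))$ both $g$ and $P$ are real-analytic and $P>0$, so $f$ is real-analytic there. Near a zero $z_0$ of order $m$, the expansion $g(x)=|x-z_0|^m|h(x)|$ shows the factor $|x-z_0|^m=\dist(x,z_0)^m$ of $P$ cancels against $g$, so $f$ extends real-analytically across $z_0$ too, unless $z_0\in Z+\tfrac12$. At a point $s\in Z+\tfrac12$, the factor of $P$ attached to the zero antipodal to $s$ has a genuine corner there (the one-sided derivatives of $\dist(\cdot,\cdot)$ at an antipode differ by $2$), while all the other relevant factors of $g$ and $P$ are real-analytic and non-zero at $s$; this makes $f'$ jump at $s$, and there are at most $b-1$ such points, proving (5). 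It also shows that $f$ is continuous and everywhere positive with $f'$ bounded off the finitely many singularities, so $f$ attains a positive minimum and $f'$ is bounded, giving (6), and $f$ attains a finite maximum, giving (7). The only delicate bookkeeping is in (5) when a zero and its antipode both lie in $Z$, but this concerns only finitely many points and does not affect the conclusions.
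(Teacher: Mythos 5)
Your proof is correct and follows essentially the same route as the paper's: items (1)--(3) via positivity of the imaginary part near $0$, item (4) by a compactness extraction plus the rigidity that a degree-$\le b-1$ polynomial with nonnegative coefficients vanishing on $b-1$ points of the form $e(x^*+a/b)$ must be the uniform one, and items (5)--(7) by the local cancellation $g(x)\asymp\dist(x,z)^m$ near each zero. The only differences are cosmetic: you factor through $T^b-u^b$ and match coefficients where the paper uses the geometric-sum identity directly, and you spell out the regularity of $f$ that the paper dismisses as immediate.
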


\begin{proof}
Item (1) holds because $p$ is a probability vector. Item (2) holds because, $g(x)=|Q(e(x))|$ for a polynomial $Q$ of degree at most $b-1$.
	
For Item (3), since $g(0) = 1$, we may by symmetry assume that $0 < x \le 1/(2b)$. Observe that $\Im(e(jx)) > 0$ for all $j \in \{ 1,\ldots, b-1 \}$, hence
\[
\Im \left(\sum_{j=0}^{b-1}p_j e(jx) \right)>0
\]
unless $p_0=1$.
In either case, we have $|g(x)|>0$ for $|x| \le 1/(2b)$. The existence of $C_0$ follows from the smoothness of $g$ at $0$, continuity, and compactness. 
	
We turn to Item (4).
We first show that $g$ cannot vanish along an arithmetic progression
of step $1/b$ and length $b-1$ in $\R/\Z$.
Suppose for a contradiction that $g$ vanishes at $x+a/b$ for some $x\in \R/\Z$ and all 
$a \in \{ 1,\ldots,b-1 \}$.
Then 
\[
g(x)=|Q(e(x))|,
\]
where 
\[
Q(z) = \sum_{j=0}^{b-1}
p_j z^j
\]
is a polynomial of degree at most $b-1$ that vanishes at $e(x+a/b)$ for $a=1,\ldots,b-1$.
It follows that
\begin{align*}
Q(z) &= \wt c \prod_{a=1}^{b-1}(z-e(x+a/b))
 = \wt c e(x)^{b-1}\prod_{a=1}^{b-1}(e(-x)z-e(a/b))\\
 &=\wt c e(x)^{b-1}\sum_{j=0}^{b-1}(e(-x)z)^j,
\end{align*}
for some $\wt c\in\bC$.
	Since $p$ is a probability vector, this forces $e(-x)=1$ and $\wt c=1/b$.
	However, we explicitly excluded $p=(1/b,\ldots,1/b)$, so this is not possible.
	This contradiction proves our claim.
	The existence of $c_0$ follows again by continuity and compactness.
	
Items (5) and (6) are immediate from the definitions. 

Item (7) holds because $g(x)/P(x)$ is a continuous function on $\R/\Z$.
Indeed, by considering the Taylor series expansion of $g$ around each point $z\in Z$,
it can be seen that $g(x)=O(\dist(x,z)^m)$, where $m$ is the multiplicity of
$z$ in $Z$.
\end{proof}

In what follows, we continue to use $c_0$, $C_0$, $C_1$ and $C_2$ to denote constants
for which Lemma \ref{lm:f} holds.
Now we state the main result of this subsection.

\begin{prp}\label{pr:convergence}
	Let $x_1,x_2\in\R/\Z$ and let
	\[
	\cX_j=\{x_j+a/b^L:a=0,\ldots, b^{L}-1\}
	\]
	for $j=1,2$.
	Then for $L\ge 2^{21} b^8(1+\log C_2/\log b)$, we have
	\[
	\frac{\sum_{x\in\cX_1}S_L(x)}{\sum_{x\in\cX_2}S_L(x)}\le
	4b\exp(b^2 L^{3/4}(C_1+\log L +\log c_0^{-1}+2b)).
	\]
\end{prp}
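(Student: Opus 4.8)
The plan is to compare the two Riemann sums $\sum_{x\in\cX_1}S_L(x)$ and $\sum_{x\in\cX_2}S_L(x)$ by matching each point $x_1+a/b^L$ with the nearby point $x_2+a/b^L$ and controlling the ratio $S_L(x_1+a/b^L)/S_L(x_2+a/b^L)$ term by term, \emph{unless} one of the factors $g(b^j x)$ becomes dangerously small, in which case the naive term-wise bound is useless. So the argument will split the lattice $\cX_1$ into ``good'' points and ``bad'' points. Write $S_L(x)=\prod_{j=0}^{L-1}g(b^jx)$ and recall $g=f\cdot P$ with $f$ bounded above and below away from $0$ in logarithmic derivative (Lemma \ref{lm:f}(6),(7)) and $P(x)=\prod_{z\in Z}\dist(x,z)$ with $|Z|\le b-1$. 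For the $f$-part, along any orbit $x, bx, \ldots, b^{L-1}x$ the ratio $\prod_j f(b^jx_1)/\prod_j f(b^jx_2)$ is at most $\exp(C_1\sum_j \dist(b^jx_1,b^jx_2))$; since $\dist(b^jx_1,b^jx_2)\le b^j\dist(x_1,x_2)$ this is problematic for large $j$, but one instead observes that $x_1+a/b^L$ and $x_2+a/b^L$ need not be paired naively: after expanding in base $b$ one should pair $x_1+a/b^L$ with $x_2+a'/b^L$ where $a'$ is chosen so that the two orbits track each other, reducing the sum of distances to $O(L)$ or using a telescoping/averaging trick. This produces the $C_1 b^2 L^{3/4}$-type contribution.

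The heart of the matter is the $P$-part, i.e.\ controlling $\prod_{j=0}^{L-1} P(b^jx)$ from below for most $x\in\cX_1$ and from above for the worst $x\in\cX_2$. Here one uses Lemma \ref{lm:f}(4): for each level $j$, as $x$ ranges over a coset of $\frac1b\Z/\Z$ the point $b^jx$ avoids the $c_0/b$-neighbourhood of $Z$ for at least two values of the relevant digit, so a point $x\in\cX_1$ is ``bad at level $j$'' only for a controlled proportion of choices of its $j$-th base-$b$ digit. A counting argument (a Chernoff/large-deviations estimate over the $L$ digits, which is where the exponent $3/4$ and the factor $b^2$ enter: a point is declared globally bad if it is bad at more than $L^{3/4}$ levels, and the number of such points is a subexponentially small fraction of $b^L$) shows that the bad points contribute negligibly to $\sum_{x\in\cX_2}S_L(x)$ compared with a single good point, while for good points $\prod_j P(b^jx)\ge (c_0/b)^{L^{3/4}}\cdot(\text{contribution of the few close zeros})$, the close-zero contribution being bounded below using item (3)-type Taylor control (each close approach to a zero $z$ of multiplicity $m$ costs at most $\dist^m$, and there are at most $b-1$ zeros, so a single very close approach at level $j$ costs at most $b^{-(L-j)(b-1)}$, summable). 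Pairing a good point of $\cX_1$ of comparable size with the maximal point of $\cX_2$, together with the trivial lower bound $\sum_{x\in\cX_2}S_L(x)\ge \max_{x\in\cX_2}S_L(x)/b$ type comparison against a single term, yields the stated bound after collecting the $\exp(b^2L^{3/4}(C_1+\log L+\log c_0^{-1}+2b))$ and $4b$ factors.

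More concretely, the steps I would carry out are: (1) record the factorisation $S_L=\prod_j f(b^jx)\cdot\prod_j P(b^jx)$ and the elementary bounds $f(x)\le C_2$, $|f'|/f\le C_1$, $g\ge \exp(-C_0x^2)$ near $0$; (2) for a point $x\in\cX_j$ and each level $j'$, define it to be \emph{bad at level $j'$} if $\dist(b^{j'}x, Z)\le c_0/b$, and \emph{good} if it is bad at fewer than $L^{3/4}$ levels; (3) using Lemma \ref{lm:f}(4), bound the number of points of $\cX_j$ that are bad at a prescribed set of $\ge L^{3/4}$ levels, and sum over all such sets (binomial coefficient $\binom{L}{\ge L^{3/4}}$ times $(b-1)^{L}/b^{\,\#\text{levels}}$ type factor) to conclude that the good points form all but a $\exp(-c L^{3/4}\log b)$-fraction, hence $\sum_{x\in\cX_j, x\text{ bad}}S_L(x)\le \max_x S_L(x)\cdot(\text{tiny})$, which is harmless; (4) for a good point $x$, bound $\prod_j P(b^jx)$ below: at the $<L^{3/4}$ bad levels use $P\ge$ (product of $\dist$'s, the smallest of which is controlled because $g$ is a trigonometric polynomial so zeros are simple-ish and a close approach at level $j'$ forces later factors $g(b^{j''}x)$ to stay bounded below), at the good levels use $P\ge (c_0/b)^{b-1}$; (5) now pair: pick a good point $x^\star\in\cX_1$ and compare $S_L(x^\star)$ with $\max_{x\in\cX_2}S_L(x)$; controlling the ratio of $f$-products along the two orbits via $|f'|/f\le C_1$ and the sum-of-distances bound (truncating orbits near zeros of $g$, where $f$ is still continuous by Lemma \ref{lm:f}(7)); (6) assemble: $\sum_{\cX_1}S_L(x)\le b^L\max_{\cX_1}S_L\le b^L\cdot(\text{const})\cdot S_L(x^\star)\cdot\exp(C_1\cdot O(L^{3/4}b^2))$ — wait, rather $\sum_{\cX_1}S_L\le (\text{good sum})+(\text{bad sum})$ with the good sum bounded against $\sum_{\cX_2}S_L$ and the bad sum absorbed. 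The main obstacle, and the step deserving the most care, is (4)–(5): showing that a close approach of $b^{j'}x$ to a zero of $g$ at an \emph{early} level $j'$ does not propagate into a catastrophic loss at later levels — one needs that the orbit cannot keep returning to the zero set, which is exactly what Lemma \ref{lm:f}(4) (no arithmetic progression of zeros of step $1/b$ and length $b-1$) is designed to prevent — together with the bookkeeping of the large-deviations count that produces the precise $L^{3/4}$ and $b^2$; everything else is routine estimation.
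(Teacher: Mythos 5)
Your overall flavour --- separate $g=fP$, control the $f$-part by $|f'|/f\le C_1$, run a good/bad dichotomy according to how often the orbit $b^jx$ approaches $Z$, and invoke Lemma \ref{lm:f}(4) --- matches the paper, but the quantitative core of your argument does not work as stated. The decisive problem is your step (3): you declare $x$ bad at level $j$ when $\dist(b^jx,Z)\le c_0/b$ (a \emph{fixed} threshold) and claim, via a Chernoff-type count, that points bad at more than $L^{3/4}$ levels form an $\exp(-cL^{3/4}\log b)$-fraction of the lattice. Lemma \ref{lm:f}(4) only guarantees that at each level \emph{two} of the $b$ digit choices are good, so the per-level bad proportion can be as large as $(b-2)/b$; a typical point is then bad at $\Theta(L)$ levels, and the set you call bad is nearly everything, not a tiny fraction. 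Relatedly, your step (4) lower bound for good points, using $P\ge (c_0/b)^{b-1}$ at each good level, loses a factor exponential in $L$, which is fatal: the proposition demands a comparison of size $\exp(O(b^2L^{3/4}\log L))$, subexponential in $L$. What is needed (and what the paper does) is a \emph{ratio} estimate at scale-dependent distances: one compares $\dist(b^nx_1,z)$ with $\dist(b^nx_2,z)$ for $x_1,x_2$ in the same short interval, with the threshold $c_0b^{n-L}$ shrinking with the level, so that each dangerous level costs only $O(\log L+\log c_0^{-1}+b)$ in the exponent and the harmless levels cost $O(b^3L^{-1/4})$ each.

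The missing structural idea is the paper's \emph{spatial} decomposition: $\R/\Z$ is cut into intervals $J$ of length $b^{-(L-1-j_0)}$ with $b^{j_0}\asymp L^{3/4}$, and $J$ is declared bad ($\cJ^{(2)}$) when zero preimages $b^{-n}Z$ come within $Lb^{-L}$ of $J$ at $\ge L^{3/4}/(2b)$ levels. On good intervals, both lattices are compared \emph{blockwise inside the same} $J$, so the distance entering the $f$-estimate is at most $|J|$, giving $\sum_n b^n|x_1-x_2|\le b^2L^{3/4}$ (your proposed re-pairing of indices is unnecessary and, for the $P$-part, nearest-point pairing fails anyway, since the partner may sit next to a zero preimage); moreover, a deterministic pigeonhole count (Lemma \ref{lm:J1-2}, using that two lattice points at spacing $b^{-L}$ cannot both be within $c_0b^{n-L}$ of the same zero, plus item (4) at the top level) shows at least half the points of any block avoid $Z$ at all scales --- no large-deviation argument is needed or available. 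On bad intervals one does not count points at all: $S_L\le C_2^Lb^{-L^{3/2}/(100b^2)}$ uniformly (this is where the hypothesis $L\ge 2^{21}b^8(1+\log C_2/\log b)$, which you never use, enters), and this is absorbed because the denominator admits the lower bound $\sum_{x\in\cX_2}S_L(x)\ge\exp(-b^2L^{3/4}(\cdots))/(2b)$, obtained by applying the good-interval block comparison to the block containing $0$ and using $S_L(0)=1$; your ``compare against a single term of $\cX_2$'' leaves this denominator bound unproved, so even your treatment of the bad contribution is unjustified. In short, the fixed-threshold point-counting scheme cannot be repaired to give the stated subexponential bound; the scale-dependent thresholds, the interval decomposition $\cJ^{(1)}\cup\cJ^{(2)}$, and the explicit lower bound on the denominator are all essential and absent from your proposal.
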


We introduce some notation needed in the proof of Proposition \ref{pr:convergence}.
Fix $j_0 \in \bN$ such that $L^{3/4}\le b^{j_0}<b L^{3/4}$.
Write $\cJ$
for the collection of intervals of the form
\[
[a/b^{L-1-j_0},(a+1)/b^{L-1-j_0})
\]
for $a=0,\ldots, b^{L-1-j_0}-1$. 
We partition $\cJ$ into two subsets $\cJ^{(1)}$ and $\cJ^{(2)}$ as follows.
Let $J\in\cJ$.
We set $J\in\cJ^{(1)}$ if the number of $n\in\{0,\ldots,L-1\}$ such that there is $x$ with
$\dist(x, J)\le L b^{-L}$ and $b^n x\in Z$ is less than $L^{3/4}/(2b)$.
Otherwise we put $J\in \cJ^{(2)}$.

Let us briefly outline the proof of Proposition \ref{pr:convergence},
which occupies the rest of this section.
If $J\in\cJ^{(1)}$, we show that the values of $S_L$ on $J$ are comparable to each other
within a multiplicative error of the rough size specified in the proposition
except for points in small neighbourhoods of the zeros of $S_L$.
We will also show that among the points $x\in\cX_2\cap J$, a substantial proportion
are not too close to a zero of $S_L$ in the above sense.
If $J\in\cJ^{(2)}$, we will show that $S_L$ is very small on $J$ making the
contribution of $\cX_1\cap J$ negligible.

\bigskip

We begin by estimating the contribution of the intervals in $\cJ^{(1)}$.

\begin{lem}\label{lm:J1-1}
	Let $J\in \cJ^{(1)}$, and let $x_1,x_2\in J$.
	Suppose that $\dist(b^{n}x_2,Z)\ge \alpha b^{n-L}$ for some $\alpha\in(0,1/2]$ and for all $n=0,\ldots,L-1$.
	Then 
	\[
	\frac{S_L(x_1)}{S_L(x_2)}\le \exp(b^2L^{3/4}(C_1+\log L+\log \alpha^{-1}+2b)).
	\]
\end{lem}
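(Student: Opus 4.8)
The plan is to take logarithms and expand over scales. Writing $g=fP$ as in Lemma~\ref{lm:f} (the case $Z=\emptyset$ being trivial, since then $P\equiv 1$),
\[
\log\frac{S_L(x_1)}{S_L(x_2)}
=\sum_{n=0}^{L-1}\log\frac{f(b^nx_1)}{f(b^nx_2)}
+\sum_{n=0}^{L-1}\sum_{z\in Z}\log\frac{\dist(b^nx_1,z)}{\dist(b^nx_2,z)},
\]
and throughout one uses the basic estimate $|x_1-x_2|<|J|=b^{1+j_0-L}<b^2L^{3/4}b^{-L}$ coming from the choice of $j_0$, which gives $\dist(b^nx_1,b^nx_2)\le\min(b^2L^{3/4}b^{n-L},\tfrac12)$ for every $n$. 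The $f$-terms can be dealt with in one stroke: $f$ is continuous with $|f'|/f\le C_1$, so $\log f$ is $C_1$-Lipschitz on $\R/\Z$ and $|\log f(b^nx_1)-\log f(b^nx_2)|\le C_1\dist(b^nx_1,b^nx_2)$. Summing this over $n$, the scales on which the first term in the minimum is the smaller form a geometric tail of total size $O(1)$, and only $O(\log L)$ scales can have $b^2L^{3/4}b^{n-L}>\tfrac12$, each contributing at most $\tfrac12$. Hence the $f$-contribution is $O(C_1(1+\log L))$, which is well inside the target bound once $L$ is large in terms of $b$ (cf. the hypothesis of Proposition~\ref{pr:convergence}).

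The core of the argument is a dichotomy on the scales $n\in\{0,\dots,L-1\}$: call $n$ \emph{bad} if some $x$ with $\dist(x,J)\le Lb^{-L}$ satisfies $b^nx\in Z$, and \emph{good} otherwise; the membership $J\in\cJ^{(1)}$ says precisely that there are fewer than $L^{3/4}/(2b)$ bad scales. For a good scale $n$ I would argue as follows. Dilating the enlarged interval $J':=\{x:\dist(x,J)\le Lb^{-L}\}$ by $b^n$ produces an arc disjoint from $Z$; since $Z\neq\emptyset$ this arc cannot wrap all the way around $\R/\Z$, so it is a genuine subinterval of $\R/\Z$ of length less than $1$ missing $Z$, and each point $b^nx$ with $x\in J$ lies at distance at least $b^n\cdot Lb^{-L}=Lb^{n-L}$ from its complement, whence $\dist(b^nx_2,z)\ge Lb^{n-L}$ for every $z\in Z$. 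Combining this with $\dist(b^nx_1,b^nx_2)\le\min(b^2L^{3/4}b^{n-L},\tfrac12)$ and distinguishing the two cases according to which term realises the minimum gives $\dist(b^nx_1,z)/\dist(b^nx_2,z)\le 1+b^2L^{-1/4}$ for every $z\in Z$. As $|Z|\le b-1$ and there are at most $L$ good scales, the good scales contribute at most $(b-1)L\log(1+b^2L^{-1/4})\le (b-1)b^2L^{3/4}$ to the $P$-sum.

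For a bad scale $n$ I would use only the hypothesis $\dist(b^nx_2,z)\ge\dist(b^nx_2,Z)\ge\alpha b^{n-L}$, which gives $\dist(b^nx_1,z)/\dist(b^nx_2,z)\le 1+b^2L^{3/4}/\alpha\le 2b^2L^{3/4}/\alpha$; so each bad scale contributes at most $(b-1)(\log\alpha^{-1}+\tfrac34\log L+2\log b+\log2)$ to the $P$-sum, and multiplying by the bound $L^{3/4}/(2b)$ on the number of bad scales yields at most $L^{3/4}(\log\alpha^{-1}+\log L+2\log b+\log2)$. Adding the $f$-part, the good-scale part and the bad-scale part, and using $(b-1)+2\log b+\log2\le 2b$, gives
\[
\log\frac{S_L(x_1)}{S_L(x_2)}\le b^2L^{3/4}(C_1+\log L+\log\alpha^{-1}+2b),
\]
the large-$L$ hypothesis being used both to absorb the $O(C_1\log L)$ term and to guarantee $b^2L^{-1/4}\le 1$ in the good-scale step.

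The step I expect to be the \emph{main obstacle} is the good-scale estimate: converting the combinatorial condition defining $\cJ^{(1)}$ into the uniform lower bound $\dist(b^nx,Z)\ge Lb^{n-L}$ valid for all $x\in J$ — where the non-wraparound observation is essential — and then controlling the ratio $\dist(b^nx_1,z)/\dist(b^nx_2,z)$ in the regime where $b^n|x_1-x_2|$ already exceeds $\tfrac12$. Everything else is routine bookkeeping, but it must be carried out carefully enough to land inside the explicit exponential in the statement.
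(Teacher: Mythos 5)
Your proof is correct and follows essentially the same route as the paper's: the same splitting $g=fP$, a logarithmic-derivative/Lipschitz bound for the $f$-factors, and the same good/bad-scale dichotomy for the $P$-factors, using $\dist(b^nx_2,Z)\ge Lb^{n-L}$ on good scales (which the paper asserts directly) and the hypothesis $\dist(b^nx_2,Z)\ge\alpha b^{n-L}$ on the at most $L^{3/4}/(2b)$ bad scales. The only differences are cosmetic --- truncating displacements at $1/2$ and the non-wraparound arc argument --- and your appeal to ``$L$ large in terms of $b$'' is an unnecessary hedge, since your own estimates already close for every $L$ (one has $O(C_1\log L)\le b^2L^{3/4}C_1$ outright, and $\log(1+x)\le x$ needs no smallness), and would in any case be harmless because the lemma is only invoked under the hypothesis of Proposition \ref{pr:convergence}.
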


\begin{proof}
    We write
	\[
	G_L(x)=\prod_{n=0}^{L-1} f(b^n x).
	\]
	We note that
	\[
	\frac{|G_L'(x)|}{G_L(x)} \le \sum_{n=0}^{L-1} b^n\frac{|f'(b^n x)|}{f(b^n x)}
	\le \frac{b^L-1}{b-1} C_1.
	\]
	This means that
	\begin{align*}
		|\log(G_L(x_2))-\log(G_L(x_1))|
		&\le \Big|\int_{x_1}^{x_2}\frac{G_L'(x)}{G_L} \d x\Big|
		\le |x_1-x_2| \frac{b^L-1}{b-1} C_1\\
		&\le b^{j_0+1} C_1
		\le b^2 L^{3/4} C_1.
	\end{align*}
	Thus
	\[
	G_L(x_1)\le \exp(b^2 L^{3/4} C_1) G_L(x_2).
	\]
	
	Now we consider the contribution of the factor $P(b^nx)$ in $S_L$.
	We recall that
	\[
	\frac{P(b^n x_1)}{P(b^n x_2)}
	=\prod_{z\in Z}\frac{\dist(b^n x_1,z)}{\dist(b^n x_2,z)}.
	\]
	By assumption, $\dist(b^n x_2,z)\ge \alpha b^{n-L}$.
	In addition,
	\[
	\dist(b^n x_1,z)\le b^n \dist(x_1,x_2)+\dist(b^n x_2,z)
	\le b^{n-L+2} L^{3/4}+\dist(b^n x_2,z).
	\]
	Therefore,
	\[
	\frac{\dist(b^n x_1,z)}{\dist(b^n x_2,z)}\le 1+\alpha^{-1} b^2 L^{3/4}.
	\]
	So we get
	\[
	\frac{P(b^n x_1)}{P(b^n x_2)}
	\le(1+\alpha^{-1} b^2 L^{3/4})^{b}.
	\]
	
	We improve on this for those $n\in\{0,\ldots, L-1\}$
	such that there is no $x$ with $\dist(x,J)\le L b^{-L}$
	and $b^nx\in Z$.
	If $n$ has this property, then
	$\dist(b^nx_2,z)\ge L b^{n-L}$ for all $z\in Z$, and estimating as above,
	we get
	\[
	\frac{P(b^n x_1)}{P(b^n x_2)}
	\le\prod_{z\in Z}\frac{\dist(b^n x_1,z)}{\dist(b^n x_2,z)}
	\le (1+L^{-1}b^2L^{3/4})^{b}
	\le\exp(b^3L^{-1/4}). 
	\]
	
Putting the two estimates together, and using the defining property of $\cJ^{(1)}$,
gives 
\begin{align*}
\frac{\prod_{n=0}^{L-1}P(b^n x_1)}{\prod_{n=0}^{L-1}P(b^n x_2)}\le&\exp(b^3L^{-1/4})^L(1+\alpha^{-1} b^2 L^{3/4})^{bL^{3/4}}\\
\le& \exp(b L^{3/4}(\log L+\log\alpha^{-1}+2b)).
\end{align*}
	
	Combining the estimates for $G_L$ and $\prod(P(b^nx))$, we get the claim.
\end{proof}

\begin{lem}\label{lm:J1-2}
Let $J\in\cJ^{(1)}$ and let $x_0\in J$. Let 
\[
\cX=\{x_0+a/b^L:a=0,\ldots,b^L-1\}\cap J.
\]
Then
\[
|\{x\in\cX:\text{ $\dist(b^nx,Z)\ge c_0 b^{n-L}$ for all $0\le n\le L-1$}\}|\ge b^{j_0}/2.
\]
\end{lem}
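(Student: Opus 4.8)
The statement we want is Lemma~\ref{lm:J1-2}: for $J\in\cJ^{(1)}$ and a coset $\cX$ of $b^{-L}\bZ/\bZ$ restricted to $J$, at least half of the $b^{j_0}$ points of $\cX$ satisfy $\dist(b^n x,Z)\ge c_0 b^{n-L}$ for every $n\in\{0,\dots,L-1\}$. The plan is to bound, for each fixed $n$, the number of ``bad'' points $x\in\cX$ for which $\dist(b^n x, Z)< c_0 b^{n-L}$, and then sum over $n$. The key point is that the defining property of $\cJ^{(1)}$ restricts the set of relevant $n$ to a sparse collection, so the total count of bad points is small.

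\bigskip

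\textbf{Step 1: reduce to the $n$ that matter.} First I would observe that if there is \emph{no} $x'$ with $\dist(x',J)\le Lb^{-L}$ and $b^n x'\in Z$, then in particular no point of $\cX\subseteq J$ can have $\dist(b^n x,Z)<c_0 b^{n-L}\le Lb^{n-L}$ once we note $c_0 b^{n-L}\le b^{n-L}\cdot L$ for $L\ge 1$ (here one uses $c_0\le 1/2$ and $b^n\cdot L b^{-L}\ge \dist(b^n x, b^n x')$ type reasoning to transfer the ``no zero nearby'' statement from scale $Lb^{-L}$ to the point $b^n x$). So only the $n$ with a zero of $g$ within distance $Lb^{-L}$ of $b^n J$ contribute bad points, and by the definition of $\cJ^{(1)}$ there are fewer than $L^{3/4}/(2b)$ such $n$.

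\bigskip

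\textbf{Step 2: count bad points for a single bad $n$.} Fix such an $n$. The points $\{b^n x : x\in \cX\}$, reduced mod $1$, form a union of arithmetic progressions with common difference $b^{n-L}$; more precisely, as $a$ ranges over the $b^{j_0}$ relevant values, $b^n x_0 + a\, b^{n-L}$ runs through a progression of step $b^{n-L}$ and length $b^{j_0}$. Since $Z$ has at most $b-1$ points (Lemma~\ref{lm:f}(2)) and each point $z\in Z$ excludes those $a$ for which $b^n x$ lies in the interval of radius $c_0 b^{n-L}$ about $z$ — an interval of length $2c_0 b^{n-L}$, hence containing at most $2c_0 + 1\le 2$ points of a progression with step $b^{n-L}$ — the number of bad $x\in\cX$ for this $n$ is at most $2(b-1)\le 2b$. (One should be slightly careful about wraparound in $\bR/\bZ$ and about whether the progression $b^n x$ overlaps itself mod $1$, but since $b^{j_0}\cdot b^{n-L} = b^{j_0+n-L}\le b^{n-1}\le 1$ when $n\le L-1-j_0$; for the remaining $n$ with $L-1-j_0 < n\le L-1$ the progression has length $\le b^{j_0}$ inside $\bR/\bZ$ and the same elementary count applies with a harmless extra factor, or one argues directly that at each scale the zeros of $S_L$ cut out $O(b)$ points.)

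\bigskip

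\textbf{Step 3: sum over $n$ and conclude.} Multiplying the per-$n$ bound $2b$ by the number $L^{3/4}/(2b)$ of bad values of $n$ gives at most $L^{3/4}$ bad points in total. Since $L^{3/4}\le b^{j_0}$ by the choice of $j_0$ (indeed $b^{j_0}\ge L^{3/4}$), the number of good points is at least $b^{j_0}-L^{3/4}\ge b^{j_0} - b^{j_0} = 0$ — which is not quite enough, so one must sharpen: in fact the choice $j_0$ with $L^{3/4}\le b^{j_0} < bL^{3/4}$ gives $b^{j_0}\ge L^{3/4}$, but we need $b^{j_0}/2$ good points, i.e.\ at most $b^{j_0}/2$ bad ones. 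The honest count of bad points is $2b\cdot(L^{3/4}/(2b)) = L^{3/4}\le b^{j_0}$, so one needs the slightly finer bound: the number of bad $n$ is \emph{strictly} less than $L^{3/4}/(2b)$, and the per-$n$ count of bad $x$ is at most $b-1$ rather than $2b$ when $c_0\le 1/2$ (an interval of length $2c_0 b^{n-L}\le b^{n-L}$ meets a progression of step $b^{n-L}$ in at most one point). This yields at most $(b-1)\cdot L^{3/4}/(2b) < L^{3/4}/2\le b^{j_0}/2$ bad points, hence at least $b^{j_0}/2$ good points, as required.

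\bigskip

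\textbf{Main obstacle.} The delicate point is the bookkeeping in Step~2--3: getting the numerology to close requires care about (i) whether an interval of radius $c_0 b^{n-L}$ about a zero catches one or two points of the step-$b^{n-L}$ progression (this is where $c_0\le 1/2$ is used), (ii) the transfer in Step~1 of the ``no nearby zero'' condition from neighbourhoods of $J$ at scale $Lb^{-L}$ to the individual points $b^n x$, and (iii) confirming that the progression $\{b^n x\}$ does not wrap around $\bR/\bZ$ too many times, so that the elementary ``points of an AP in an interval'' count is valid. None of these is deep, but together they are exactly what makes the stated constants ($c_0/b$, the threshold $L^{3/4}/(2b)$, the conclusion $b^{j_0}/2$) fit.
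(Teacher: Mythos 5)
There is a genuine gap, and it is exactly at the point you wave away as a ``harmless extra factor'': the per-$n$ count of bad points for $n$ close to $L-1$. Note first that $|\cX|=b^{j_0+1}$ (the interval $J$ has length $b^{-(L-1-j_0)}$ and the spacing is $b^{-L}$), not $b^{j_0}$ as in your setup. For a fixed $n$, the points $b^nx$, $x\in\cX$, form a progression of step $b^{n-L}$ with about $b^{j_0+1}$ terms, so its total span is $b^{n-L+j_0+1}$; once $n>L-1-j_0$ this exceeds $1$ and the progression wraps around $\R/\Z$ about $b^{n-L+1+j_0}$ times. Your ``at most $2$ (or at most $1$) points of the progression in an interval of length $2c_0b^{n-L}$'' is only a per-pass bound, so the correct per-zero count is $\max(b^{n-L+1+j_0},1)$, which at $n=L-1$ equals $b^{j_0}$: a single zero of $g$ within $c_0/b$ of one of the $b$ residues $x_0+a/b$ wipes out an entire residue class of $\approx b^{j_0}$ points of $\cX$, and with up to $b-1$ zeros this could in principle remove $(b-1)b^{j_0}$ points, i.e.\ essentially all of $\cX$. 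So your claimed per-$n$ bound of $b-1$ (or $2b$) bad points is false for large $n$, and the count $(b-1)L^{3/4}/(2b)<b^{j_0}/2$ does not survive; the restriction to the at most $L^{3/4}/(2b)$ ``active'' $n$ coming from the definition of $\cJ^{(1)}$ only helps where the per-$n$ count is $O(b)$, namely for $n\le L-1-j_0$.

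The paper closes exactly this hole with two ingredients you do not use. For the intermediate range $n\in\{L-j_0,\ldots,L-2\}$ it simply sums the per-zero bound $b^{n-L+1+j_0}$ as a geometric series (no appeal to $\cJ^{(1)}$ there), contributing at most about $b^{j_0}$ bad points in total. For the critical level $n=L-1$ it invokes Item (4) of Lemma \ref{lm:f}: $g$ cannot be small near a full arithmetic progression of step $1/b$, so at most $b-2$ of the $b$ residue classes (each of size $b^{j_0}$) can be destroyed, contributing at most $(b-2)b^{j_0}$. Together with the $\cJ^{(1)}$ bound $(b-1)\cdot b^{j_0-1}/2$ for the small $n$, the bad count is at most $b^{j_0+1}-b^{j_0}/2$, leaving $b^{j_0}/2$ good points out of $|\cX|=b^{j_0+1}$. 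Your argument contains neither the geometric-series treatment of the wrapping scales nor the use of Lemma \ref{lm:f}(4), and without them the stated conclusion does not follow.
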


\begin{proof}
	Fix some $n\in\{0,\ldots, L-1\}$.
	Let $x_1,x_2\in\cX$ and $z\in Z$.
	If
	\[
	\dist(b^n x_1,z),\dist(b^n x_2,z)< c_0 b^{n-L},
	\]
	then $\dist(b^n x_1,b^n x_2)< 2c_0 b^{n-L}$.
	Since $c_0\le 1/2$, this is possible only if $b^n x_1=b^nx_2$.
	This means that
	\begin{align*}
		|\{x\in\cX:\text{ $\dist(b^nx,z)< c_0 b^{n-L}$}\}|
		&\le \max\Big(\frac{b^{-(L-1-j_0)}}{b^{-n}},1\Big)\\
		&=\max(b^{n-L+1+j_0},1),
	\end{align*}
since $b^{-(L-1-j_0)}$ is the length of the interval $J$ and $b^{-n}$
is the spacing between the points in question.
	
	We sum this up for all $z\in Z$ and $n=0,\ldots, L-1$ taking into account that
	$|Z|\le b-1$ and that, for $n=L-1$, the number of $z\in Z$ such that there is
	$x\in \cX$ with $\dist(b^{L-1}x,z)\le c_0 b^{-1}$ is at most $b-2$.
	(This is by Item (4) in Lemma \ref{lm:f}.)
	We also use that $J\in\cJ^{(1)}$, hence the number of $n\in\{0,\ldots, L-1\}$ for which $\{x\in\cX:\text{ $\dist(b^nx,z)< c_0 b^{n-L}$}\}$ is non-empty is at most $L^{3/4}/(2b)\le b^{j_0-1}/2$.
	We then get
	\begin{align*}
		|\{x\in\cX:&\text{ $\dist(b^nx,Z)< c_0 b^{n-L}$ for some $0\le n\le L-1$}\}|\\
		&\le (b-2)b^{j_0}+(b-1)\Big(\sum_{n=L-j_0}^{L-2} b^{n-L+1+j_0}+b^{j_0-1}/2\Big)\\
		&\le b^{j_0+1}-b^{j_0}+b^{j_0}/2
= b^{j_0+1}-b^{j_0}/2.
	\end{align*}
	
	Now we note that $|\cX|=b^{j_0+1}$, and the claim follows.
\end{proof}

\begin{prp}\label{pr:J1}
	Let $J\in\cJ^{(1)}$, and let $x_1,x_2\in J$ lie within distance $b^{-L}$ of the left
	endpoint of $J$.
	Then
	\[
	\frac{\sum_{a=0}^{b^{j_0+1}-1}S_L(x_2+a/b^L)}{\sum_{a=0}^{b^{j_0+1}-1}S_L(x_1+a/b^L)}
	\le 2b\exp(b^2L^{3/4}(C_1+\log L+\log c_0^{-1}+2b)).
	\]
\end{prp}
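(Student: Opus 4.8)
The goal is to compare the Riemann sum of $S_L$ over the shifted grid $\cX_1$ with the one over $\cX_2$, where the base points $x_1, x_2$ both lie within $b^{-L}$ of the left endpoint of $J \in \cJ^{(1)}$. The strategy is to pass through a single comparison point --- namely, to show separately that $\sum_a S_L(x_j + a/b^L)$ is comparable to a quantity built from the ``good'' points, i.e.\ those $x$ with $\dist(b^n x, Z) \ge c_0 b^{n-L}$ for all $0 \le n \le L-1$. So first I would fix the good set produced by Lemma~\ref{lm:J1-2}: for each $x_j$ there are at least $b^{j_0}/2$ values of $a$ for which $x_j + a/b^L$ is good. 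Call these good points (for the grid based at $x_1$) $y_1, \dots, y_r$ with $r \ge b^{j_0}/2$, and similarly $y_1', \dots, y_s'$ for the grid based at $x_2$.

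For the \emph{upper} estimate of the numerator $\sum_a S_L(x_2 + a/b^L)$, I would bound each term $S_L(x_2 + a/b^L)$ --- good or bad --- by $\exp(b^2 L^{3/4}(C_1 + \log L + \log c_0^{-1} + 2b))$ times $S_L(y_1')$ for a fixed good point $y_1'$, using Lemma~\ref{lm:J1-1} with $\alpha = c_0$ applied to the pair $(x_2 + a/b^L, y_1')$; here I use that both points lie in $J$ and that $y_1'$ satisfies the separation hypothesis $\dist(b^n y_1', Z) \ge c_0 b^{n-L}$. Since there are $|\cX| = b^{j_0+1}$ terms, this gives $\sum_a S_L(x_2 + a/b^L) \le b^{j_0+1} \exp(\cdots) S_L(y_1')$. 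Dually, for the \emph{lower} estimate of the denominator, I would throw away the bad points and keep only the good ones, and then bound $S_L(y_1')$ below in terms of each good point $y_i$ of the $x_1$-grid: again by Lemma~\ref{lm:J1-1} (applied now to the pair $(y_1', y_i)$, both good), $S_L(y_1') \le \exp(b^2 L^{3/4}(C_1 + \log L + \log c_0^{-1} + 2b)) \, S_L(y_i)$, so $\sum_a S_L(x_1 + a/b^L) \ge \sum_i S_L(y_i) \ge r \exp(-(\cdots)) S_L(y_1') \ge (b^{j_0}/2) \exp(-(\cdots)) S_L(y_1')$.

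Dividing the two displays, the $S_L(y_1')$ cancels and I am left with
\[
\frac{\sum_a S_L(x_2 + a/b^L)}{\sum_a S_L(x_1 + a/b^L)}
\le \frac{b^{j_0+1}}{b^{j_0}/2} \exp\!\big(2 b^2 L^{3/4}(C_1 + \log L + \log c_0^{-1} + 2b)\big).
\]
Since $b^{j_0+1}/(b^{j_0}/2) = 2b$, this would give $2b \exp(2 b^2 L^{3/4}(\cdots))$, which has the right constant prefactor but twice the claimed exponent in the exponential. To land exactly on the stated bound one should be slightly more economical: rather than routing both the numerator and the denominator through the \emph{same} comparison point $y_1'$ (which uses Lemma~\ref{lm:J1-1} twice), one routes the whole comparison through a \emph{single} application. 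Concretely: bound $\sum_a S_L(x_2 + a/b^L) \le b^{j_0+1} \max_{x \in J} S_L(x + \text{grid})$ crudely, but then instead compare $x_2 + a/b^L$ directly to a good point $y_i$ of the \emph{denominator's} grid --- each bad or good term of the numerator is $\le \exp(b^2 L^{3/4}(\cdots)) S_L(y_i)$ for any fixed good $y_i$ (one application of Lemma~\ref{lm:J1-1}, since $y_i$ satisfies the $\alpha = c_0$ hypothesis and lies in $J$), and summing over $a$ and comparing against $\sum_i S_L(y_i) \ge (b^{j_0}/2) \cdot (\text{that same } S_L(y_i) \text{ averaged})$ --- more precisely, pick $y_i$ to be the good point of the denominator grid \emph{maximising} $S_L$, so that $\sum_{a}S_L(x_1+a/b^L) \ge (b^{j_0}/2) \min_{\text{good }y} S_L(y)$ is too lossy; instead pick $y_i$ maximising $S_L$ among good points and note $\sum_a S_L(x_1+a/b^L)\ge S_L(y_i)$, while $\sum_a S_L(x_2+a/b^L)\le b^{j_0+1}\exp(b^2L^{3/4}(\cdots))S_L(y_i)$. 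Wait --- that loses the factor $b^{j_0}/2$ entirely and gives $b^{j_0+1}\exp(\cdots)$, which is worse. The correct bookkeeping is the first route above but noting that \emph{one} of the two Lemma~\ref{lm:J1-1} applications can be absorbed: use $\sum_a S_L(x_1 + a/b^L) \ge \sum_{\text{good }y} S_L(y) \ge (b^{j_0}/2)\,\overline{S}$ where $\overline S$ is the \emph{average} of $S_L$ over the good denominator points, and separately $\sum_a S_L(x_2+a/b^L)\le b^{j_0+1}\exp(b^2L^{3/4}(\cdots))\,\overline S$ by comparing each numerator term to the average via a single application of Lemma~\ref{lm:J1-1} (valid because \emph{every} good point is a legitimate comparison target and averaging over them only improves the bound). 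Then the ratio is exactly $2b\exp(b^2L^{3/4}(C_1+\log L+\log c_0^{-1}+2b))$.

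\textbf{The main obstacle.} The delicate point, and the one I would be most careful about, is exactly this constant-tracking in the exponent: naively routing through a fixed comparison point costs a factor of $2$ in the exponent that the statement does not allow, so one has to compare each numerator term to a good \emph{denominator} point directly, and arrange the good-point count $b^{j_0}/2$ from Lemma~\ref{lm:J1-2} to cancel the crude count $b^{j_0+1}=|\cX|$ down to $2b$. The hypotheses of Lemma~\ref{lm:J1-1} --- in particular that it applies to any pair in $J$ provided the \emph{second} argument is $c_0$-separated from $Z$ at all scales --- are exactly what makes this work, and I would double-check that the good points from Lemma~\ref{lm:J1-2} are precisely such valid second arguments (they are: the condition ``$\dist(b^n y, Z) \ge c_0 b^{n-L}$ for all $0 \le n \le L-1$'' is literally the hypothesis with $\alpha = c_0$). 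Everything else is a routine combination of Lemmas~\ref{lm:J1-1} and \ref{lm:J1-2}.
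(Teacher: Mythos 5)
Your final accounting is correct and is essentially the paper's own argument: the paper likewise combines Lemmata \ref{lm:J1-1} and \ref{lm:J1-2} with a single application of Lemma \ref{lm:J1-1} per comparison, noting that at least $b^{j_0}/2$ (good) denominator terms are each at least $\exp(-b^2L^{3/4}(C_1+\log L+\log c_0^{-1}+2b))$ times the \emph{maximum} numerator term, while the numerator has at most $b^{j_0+1}$ terms, giving the factor $2b$. Your averaging over good denominator points is just a cosmetic variant of comparing against the maximum numerator term, so after discarding your initial double-application detour the proof matches the paper's.
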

\begin{proof}
	Combining Lemmata \ref{lm:J1-1} and \ref{lm:J1-2}, it follows that there are at least
	$b^{j_0}/2$ terms in the denominator that are at least
	\[
	\exp(-b^2L^{3/4}(C_1+\log L+\log c_0^{-1}+2b))
	\]
	times the maximum of the terms in the numerator.
	There are $b^{j_0+1}$ terms in total, so the claim follows.
\end{proof}

Now we turn to the intervals in $\cJ^{(2)}$.

\begin{lem}\label{lm:J2}
	Let $J\in \cJ^{(2)}$.
	Suppose $L \ge 2^{21} b^8$.
	Then
	\[
	S_L(x) \le C_2^L b^{-L^{3/2}/(100b^2)}
	\]
	for all $x\in J$.
\end{lem}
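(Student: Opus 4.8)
The plan is to show that for $J \in \cJ^{(2)}$ there are many scales $n$ at which the factor $g(b^n x)$ is small, and that the product of these small factors beats the boundedness of the remaining factors. Recall that $J \in \cJ^{(2)}$ means there are at least $L^{3/4}/(2b)$ indices $n \in \{0,\ldots,L-1\}$ for which there exists $x'$ with $\dist(x', J) \le L b^{-L}$ and $b^n x' \in Z$. Since $J$ has length $b^{-(L-1-j_0)}$ and $b^{j_0} \ge L^{3/4}$, for such an $n$ the point $b^n x'$ lies within $b^n(b^{-(L-1-j_0)} + L b^{-L}) \ll b^{n - L + 1 + j_0} \ll b^{j_0 - L/2}$ or so of a zero of $g$, for \emph{every} $x \in J$; I would make the exponent bookkeeping precise here to get, say, $\dist(b^n x, Z) \le b^{n-L} \cdot b^{2 j_0}$ for all $x \in J$ and all such $n$.

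First I would fix such an $n$ and an $x \in J$, pick $z \in Z$ with $\dist(b^n x, z)$ small, and bound $g(b^n x) = f(b^n x) P(b^n x) \le C_2 \cdot \dist(b^n x, z) \cdot \prod_{z' \ne z}\dist(b^n x, z')$. Using the crude bound $\dist(\cdot,z') \le 1$ on the other factors and Item (7) of Lemma \ref{lm:f} for $f$, this gives $g(b^n x) \le C_2 \cdot b^{n-L} b^{2j_0} \le C_2 b^{-L/3}$ (again the precise exponent needs the relation $b^{j_0} < b L^{3/4}$ and $L$ large; the point is it is $\le b^{-cL}$ for a definite $c$). For the remaining indices $n$ — at most $L$ of them — I just use $g(b^n x) \le C_2$ (Item (7), since $P \le 1$). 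Multiplying over all $L$ factors in $S_L(x) = \prod_{n=0}^{L-1} g(b^n x)$, the at-least-$L^{3/4}/(2b)$ small factors contribute $\le (C_2 b^{-L/3})^{L^{3/4}/(2b)}$, and the rest contribute $\le C_2^L$, so
\[
S_L(x) \le C_2^L \cdot (C_2 b^{-L/3})^{L^{3/4}/(2b)} = C_2^{L + L^{3/4}/(2b)} b^{-L^{7/4}/(6b)}.
\]
This is slightly stronger than the claimed $C_2^L b^{-L^{3/2}/(100 b^2)}$ once $L$ is large (the hypothesis $L \ge 2^{21} b^8$ is there precisely to absorb the extra $C_2^{L^{3/4}/(2b)}$ into the gap between $L^{7/4}/(6b)$ and $L^{3/2}/(100 b^2)$, and to make sure $j_0$ is genuinely of size $L^{3/4}$ so the "small" scales really are small); I would check that numerical inequality at the end.

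\textbf{The main obstacle} I expect is not conceptual but the exponent accounting: making sure that the distance $\dist(b^n x, Z)$ at a "bad" scale $n$ is bounded by $b^{n-L}$ times something subexponential in $L$ (so that $g(b^n x) \le b^{-cL}$ with $c$ independent of $L$), which forces me to track the interplay between the length $b^{-(L-1-j_0)}$ of $J$, the slack $Lb^{-L}$ in the definition of $\cJ^{(2)}$, and the size $b^{j_0} \asymp L^{3/4}$, and then to verify that $C_2^{L^{3/4}/(2b)}$ is dominated by $b^{(L^{7/4}/(6b)) - (L^{3/2}/(100b^2))}$ under the stated lower bound on $L$. None of this is deep, but it is where the constants $2^{21}b^8$, $L^{3/2}$, and $100b^2$ in the statement get pinned down, so it deserves care.
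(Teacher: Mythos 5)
There is a genuine gap, and it is exactly at the step you flagged as ``exponent accounting''. Your argument needs every bad scale $n$ to contribute a factor $g(b^nx)\le C_2 b^{-L/3}$, which you get from $\dist(b^nx,Z)\le b^{n-L}b^{2j_0}\le b^{-L/3}$. But the second inequality requires $n\le 2L/3-2j_0$, while the definition of $\cJ^{(2)}$ gives no control whatsoever on \emph{where} the $\ge L^{3/4}/(2b)$ bad indices $n$ lie: they can all be concentrated at the top, $n\in\{L-\lceil L^{3/4}/(2b)\rceil,\ldots,L-1\}$ (and for $n\ge L-1-j_0$ the preimages of $Z$ under $x\mapsto b^nx$ are $b^{-n}$-spaced, hence automatically meet any such $J$, so the top scales are in fact the typical bad ones). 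For such $n$ the bound $\dist(b^nx,Z)\le b^{n-L}\cdot O(Lb^2)$ is close to trivial, not $b^{-cL}$, so the per-scale gain degenerates and your product bound $C_2^{L+L^{3/4}/(2b)}b^{-L^{7/4}/(6b)}$ does not follow; in the worst-case configuration just described the true size of $\prod_n P(b^nx)$ is only about $b^{-\sum_{k\le \lceil L^{3/4}/(2b)\rceil}k}\approx b^{-L^{3/2}/(8b^2)}$, so a bound with exponent $L^{7/4}$ is not attainable and is the reason the lemma is stated with $L^{3/2}$.

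The paper's proof repairs precisely this point: for each bad $n$ it bounds $P(b^nx)\le\dist(b^nx,b^ny)\le b^{n-L}(L+b^2L^{3/4})\le b^{n-L}\cdot 2Lb^2$, uses $P\le 1$ at all other scales, and then observes that the product over the bad scales is weakest when they are the largest indices, giving $\prod_{n=0}^{L-1}P(b^nx)\le\prod_{n=L-\lceil L^{3/4}/(2b)\rceil}^{L-1}b^{n-L}\cdot 2Lb^2\le b^{-L^{3/2}/(8b^2)}(2Lb^2)^{L^{3/4}}$; the polynomial loss $(2Lb^2)^{L^{3/4}}\le b^{4L^{5/4}}$ is then absorbed using $L\ge 2^{21}b^8$ to land on $b^{-L^{3/2}/(100b^2)}$, and $f\le C_2$ (Item (7)) supplies the $C_2^L$. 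So your overall strategy (many scales with a small zero-distance factor, crude $C_2$ bound elsewhere) is the right one, but the quantitative heart of the lemma is the summation $\sum(L-n)\gtrsim L^{3/2}/b^2$ over a possibly top-heavy set of bad scales, which your sketch replaces by an assumption that cannot be justified.
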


\begin{proof}
Let $n\in\{0,\ldots,L-1\}$ be such that there are $y\in\R/\Z$ and $z\in Z$ with
$\dist(J,y)\le Lb^{-L}$ 
and $b^ny=z$. Then
\[
P(b^nx)\le\dist(b^nx,b^ny)\le b^n\dist(x,y)\le b^{n-L}(L+b^2L^{3/4})\le b^{n-L}2Lb^2.
\]
By the definition of $\cJ^{(2)}$, this gives
	\[
	 \prod_{n=0}^{L-1}P(b^nx)
	\le \prod_{n=L-\lceil L^{3/4}/(2b)\rceil}^{L-1}
	b^{n-L}\cdot2Lb^2
	\le b^{-L^{3/2}/8b^2}(2Lb^2)^{L^{3/4}}.
	\]
	Using $\log(2Lb^2)/\log b\le 4 L^{1/2}$, which is true for all $b\ge 2$, we get
	\[
	\prod_{n=0}^{L-1}P(b^nx) \le b^{-L^{3/2}/8b^2+4L^{5/4}}
	\le b^{-L^{3/2}/(100b^2)},
	\]
	which implies the claim by item (7) in Lemma \ref{lm:f}.
\end{proof}

\begin{proof}[Proof of Proposition \ref{pr:convergence}]
	By Proposition \ref{pr:J1}, we have
	\begin{align*}
		\frac{\sum_{J\in \cJ^{(1)}}\sum_{x\in\cX_1\cap J} S_L(x)}
		{\sum_{x\in\cX_2} S_L(x)}
		&\le
		\frac{\sum_{J\in \cJ^{(1)}}\sum_{x\in\cX_1\cap J} S_L(x)}
		{\sum_{J\in \cJ^{(1)}}\sum_{x\in\cX_2\cap J} S_L(x)}\\
		&\le 2b\exp(b^2 L^{3/4}(C_1+\log L +\log c_0^{-1}+2b)).
	\end{align*}
	
	On the other hand, by Lemma \ref{lm:J2}, we have
	\[
	\sum_{J\in \cJ^{(2)}}\sum_{x\in\cX_1\cap J} S_L(x)
	\le C_2^Lb^{L-L^{3/2}/(100b^2)}\le 1,
	\]
	as $L\ge 100^2b^4(1+\log C_2/\log b)$.
	In addition, by applying Proposition \ref{pr:J1} with $x_1 = 0$,
	we get
	\begin{align*}
		\sum_{x\in\cX_2} S_L(x) &\ge \sum_{x\in\cX_2\cap[0,b^{-L-1-j_0}]} S_L(x)\\
		&\ge 1\cdot \exp(-b^{2}L^{3/4}(C_1+\log L+\log c_0^{-1}+2b))/(2b),
	\end{align*}
	since $S_L(0)=1$.
	This gives us
	\[
	\frac{\sum_{J\in \cJ^{(2)}}\sum_{x\in\cX_1\cap J} S_L(x)}{\sum_{x\in\cX_2}S_L(x)}
	\le 2b\exp(b^2 L^{3/4}(C_1+\log L +\log c_0^{-1}+2b)).
	\]
	
	Combining our estimates for the contributions of $\cJ^{(1)}$ and $\cJ^{(2)}$, we get the claim.
\end{proof}

\subsection{Completing the proofs}

The next result establishes Theorem~\ref{th:bounds} assuming Proposition \ref{pr:limit-exists} holds. We will prove Proposition \ref{pr:limit-exists} afterwards.

\begin{lem}
	The limit
	\[
	\lim_{N\to\infty} \frac{-\log\Big(\int_0^1 \prod_{j=0}^{N-1} g(b^j x) \d x\Big)}{\log b^N}
	\]
	exists, and for all $L\in \bN$, it is between 
	\[
	\frac{-\log\Big(\max_{x}b^{-L}\sum_{i=0}^{b^L-1} S_L(x+i/b^L)\Big)}{\log b^L}
	\]
	and
	\[
	\frac{-\log\Big(\min_{x}b^{-L}\sum_{i=0}^{b^L-1} S_L(x+i/b^L)\Big)}{\log b^L}.
	\]
\end{lem}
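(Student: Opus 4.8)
The plan is to view the two displayed quantities in the Lemma as the $N=L$ members of two sequences governed by Fekete's subadditive lemma, and then to trap the integral between these sequences. Write $I_N=\int_0^1\prod_{j=0}^{N-1}g(b^jx)\d x=\int_0^1 S_N(x)\d x$, so that the quantity whose limit the Lemma concerns is $(-\log I_N)/\log b^N$; that this limit exists and equals $\hat\kappa_1(\nu)$ is part of Proposition~\ref{pr:limit-exists}, which we may invoke here. For $L\in\bN$ set
\[
R_L(x)=b^{-L}\sum_{i=0}^{b^L-1}S_L(x+i/b^L),\qquad M_L=\max_x R_L(x),\qquad m_L=\min_x R_L(x).
\]
Since $g$, and hence $S_L$, is $\bZ$-periodic, $R_L$ is continuous, nonnegative, bounded by $1$, and $b^{-L}$-periodic, and a change of variables gives $\int_0^1 R_L(x)\d x=I_L$; in particular $0\le m_L\le I_L\le M_L\le 1$. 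Also $m_1>0$: were $R_1(x)=0$, then $g$ would vanish at $x,x+1/b,\dots,x+(b-1)/b$, hence on an arithmetic progression of step $1/b$ and length $b-1$, which is excluded in the proof of Item~(4) of Lemma~\ref{lm:f}. Thus all logarithms below are finite.

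The heart of the matter is a submultiplicativity relation for $M_L$ and $m_L$. Starting from $S_{N+L}(x)=S_N(x)\,S_L(b^Nx)$ and decomposing the index as $i=qb^L+r$ with $0\le r<b^L$ and $0\le q<b^N$, the identity $b^N\bigl(x+i\,b^{-(N+L)}\bigr)=b^Nx+q+r\,b^{-L}$ together with the $\bZ$-periodicity of $S_L$ makes the integer carry $q$ disappear from $S_L$, and collecting the sum over $q$ into $R_N$ yields
\[
R_{N+L}(x)=b^{-L}\sum_{r=0}^{b^L-1}S_L\bigl(b^Nx+r/b^L\bigr)\,R_N\bigl(x+r/b^{N+L}\bigr).
\]
Bounding each factor $R_N(\cdot)$ between $m_N$ and $M_N$, and observing that the remaining sum equals $R_L(b^Nx)\in[m_L,M_L]$, we obtain $m_Nm_L\le R_{N+L}(x)\le M_NM_L$ for all $x$, and therefore
\[
M_{N+L}\le M_NM_L,\qquad m_{N+L}\ge m_Nm_L\qquad(N,L\in\bN).
\]

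It follows that $N\mapsto-\log M_N$ is superadditive and $N\mapsto-\log m_N$ is subadditive, both with finite nonnegative values, so Fekete's lemma gives that the limits
\[
\lim_{N\to\infty}\frac{-\log M_N}{\log b^N}=\sup_L\frac{-\log M_L}{\log b^L},\qquad
\lim_{N\to\infty}\frac{-\log m_N}{\log b^N}=\inf_L\frac{-\log m_L}{\log b^L}
\]
exist. On the other hand, $m_N\le I_N\le M_N$ gives
\[
\frac{-\log M_N}{\log b^N}\le\frac{-\log I_N}{\log b^N}\le\frac{-\log m_N}{\log b^N}
\]
for every $N$, and letting $N\to\infty$, with the middle term tending to $\hat\kappa_1(\nu)$, yields $\lim_N\frac{-\log M_N}{\log b^N}\le\hat\kappa_1(\nu)\le\lim_N\frac{-\log m_N}{\log b^N}$. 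Since the supremum above dominates, and the infimum is dominated by, the $N=L$ term, we conclude that $\frac{-\log M_L}{\log b^L}\le\hat\kappa_1(\nu)\le\frac{-\log m_L}{\log b^L}$ for every $L\in\bN$, which is exactly the assertion of the Lemma.

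I expect the one place requiring genuine care to be the submultiplicativity identity for $R_{N+L}$: the decomposition of the summation index modulo $b^L$ and the cancellation of the carry through $\bZ$-periodicity of $S_L$ must be carried out precisely, but since every quantity in sight is nonnegative the resulting inequalities propagate without sign complications, and everything else is a routine application of Fekete's lemma and passage to the limit under monotone inequalities. As a coda, combining the bounds just established with Proposition~\ref{pr:convergence}, which controls the ratio $M_L/m_L$, shows that $\frac{-\log m_L}{\log b^L}-\frac{-\log M_L}{\log b^L}=\frac{\log(M_L/m_L)}{\log b^L}\to 0$ as $L\to\infty$; hence the lower and upper bounds in Theorem~\ref{th:bounds} converge to the common value $\hat\kappa_1(\nu)$, completing the proof of Theorem~\ref{th:bounds} granted Proposition~\ref{pr:limit-exists}.
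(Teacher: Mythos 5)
Your submultiplicativity computation is correct: the identity $R_{N+L}(x)=b^{-L}\sum_{r}S_L(b^Nx+r/b^L)\,R_N(x+r/b^{N+L})$ does follow from $S_{N+L}(x)=S_N(x)S_L(b^Nx)$, the decomposition $i=qb^L+r$ and the $\Z$-periodicity of $S_L$, and it yields $M_{N+L}\le M_NM_L$, $m_{N+L}\ge m_Nm_L$, after which Fekete and the squeeze $m_N\le I_N\le M_N$ are sound (and $m_L\ge m_1^L>0$ disposes of the finiteness of the logarithms). This is a genuinely different mechanism from the paper, which fixes $L$, proves $\bigl(\min_x b^{-L}\sum_iS_L(x+i/b^L)\bigr)^N\le\int_0^1\prod_{j=0}^{N-1}S_L(b^{jL}x)\,\d x\le\bigl(\max_x\cdots\bigr)^N$ by peeling off one block inside the integral using the $b^{-L}$-periodicity of $\prod_{j\ge1}S_L(b^{jL}x)$, and then compares with $\int\prod_{j<N}g(b^jx)\,\d x$ via $g\le1$ and $\lfloor N/L\rfloor$ adjustments.

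However, there is a genuine gap: you invoke Proposition \ref{pr:limit-exists} to assert that the limit $\lim_N(-\log I_N)/\log b^N$ exists (and equals $\hat\kappa_1(\nu)$), but in this paper the existence of precisely that limit is what this Lemma is there to supply. The proof of Proposition \ref{pr:limit-exists} only transfers existence along a chain of comparisons (integral $\leftrightarrow$ Riemann sum $\leftrightarrow$ partial sums of $|\hat\nu|$); its final step ``the existence of this limit implies that it must equal $\hat\kappa_1(\nu)$'' takes the existence as input, and that input comes from this Lemma. So your appeal to Proposition \ref{pr:limit-exists} is circular. The repair is already contained in your coda and does not need $\hat\kappa_1$ at all: by Proposition \ref{pr:convergence}, $\log(M_L/m_L)=o(L\log b)$, so the two Fekete limits $\lim_N(-\log M_N)/\log b^N=\sup_L(-\log M_L)/\log b^L$ and $\lim_N(-\log m_N)/\log b^N=\inf_L(-\log m_L)/\log b^L$ coincide; the squeeze $m_N\le I_N\le M_N$ then forces $\lim_N(-\log I_N)/\log b^N$ to exist and equal this common value, which by the sup/inf characterisations lies between the $L$-th lower and upper bounds for every $L$. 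Restructured this way (existence from Fekete plus Proposition \ref{pr:convergence}, with Proposition \ref{pr:limit-exists} nowhere used), your argument is a complete and valid alternative to the paper's proof, and it delivers the ``moreover'' clause of Theorem \ref{th:bounds} in the same stroke.
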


\begin{proof}
	Fix some value of $L$.
	Notice that 
	\[
	\prod_{j=1}^{N-1} S_L(b^{jL}x) 
	\]
	is $b^{-L}$-periodic.
	This means that
	\begin{align*}
		\int_0^1\prod_{j=0}^{N-1} S_L(b^{jL}x) \d x
		=&\int_0^{b^{-L}}\Big(\sum_{i=0}^{b^L-1} S_L(x+i/b^L)\Big)\prod_{j=1}^{N-1} S_L(b^{jL}x) \d x.
	\end{align*}
	As
	\[
	\int_0^{b^{-L}}\prod_{j=1}^{N-1} S_L(b^{jL}x) \d x=b^{-L}\int_0^1\prod_{j=0}^{N-2} S_L(b^{jL}x) \d x,
	\]
	we have
	\begin{align*}
		&\Big(\min_{x}b^{-L}\sum_{i=0}^{b^L-1} S_L(x+i/b^L)\Big)
		\int_0^1\prod_{j=0}^{N-2} S_L(b^{jL}x) \d x\\
		\le&\int_0^1\prod_{j=0}^{N-1} S_L(b^{jL}x) \d x\\
		\le & \Big(\max_{x}b^{-L}\sum_{i=0}^{b^{L}-1} S_L(x+i/b^L)\Big)
		\int_0^1\prod_{j=0}^{N-2} S_L(b^{jL}x) \d x.
	\end{align*}
	By induction, this leads to
	\begin{align*}
		\Big(\min_{x}b^{-L}\sum_{i=0}^{b^L-1} S_L(x+i/b^L)\Big)^N
		&\le \int_0^1\prod_{j=0}^{N-1} S_L(b^{jL}x) \d x \\
		&\le \Big(\max_{x}b^{-L}\sum_{i=0}^{b^L-1} S_L(x+i/b^L)\Big)^N.
	\end{align*}
	
	Observe that, for all $N \in \bN$, we have
	\[
	\int_0^1\prod_{j=0}^{\lfloor N/L\rfloor-1} S_L(b^{jL}x) \d x
	\ge\int_0^1\prod_{j=0}^{N-1} g(b^{j}x) \d x
	\ge\int_0^1\prod_{j=0}^{\lfloor N/L\rfloor} S_L(b^{jL}x) \d x.
	\]
	Combining this with our previous bounds, we get
	\[
	\liminf_{N\to\infty} \frac{-\log\Big(\int_0^1 \prod_{j=0}^{N-1} g(b^j x) \d x \Big)}{\log b^N}
	\ge
	\frac{-\log\Big(\max_{x}b^{-L}\sum_{i=0}^{b^L-1} S_L(x+i/b^L)\Big)}{\log b^L}
	\]
	and 
	\begin{align*}
	\limsup_{N\to\infty} &\frac{-\log\Big(\int_0^1 \prod_{j=0}^{N-1} g(b^j x) \d x\Big)}{\log b^N}\\
	&\le
	\frac{-\log\Big(\min_{x}b^{-L}\sum_{i=0}^{b^L-1} S_L(x+i/b^L)\Big)}{\log b^L}
	\end{align*}
	Proposition \ref{pr:convergence} implies that the lower and upper bounds converge to
	the same limit as $L\to \infty$, and this completes the proof.
\end{proof}

\begin{lem}\label{lm:cutoff}
	Let $N \in \bN$ and $x\in[-b^N/2, b^N/2]$.
	Then
	\[
	\Big|\log| \hat \nu(x)|-\log\Big(\prod_{j=1}^N g(b^{-j} x)\Big)\Big|\le C_0.
	\]
\end{lem}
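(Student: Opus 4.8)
The plan is to read off from the product formula \eqref{AbsExact} that the quantity to be estimated is a tail of a convergent series. Indeed, since $|\hat\nu(x)| = \prod_{j\ge1} g(b^{-j}x)$ and $\prod_{j=1}^N g(b^{-j}x)$ is the corresponding partial product, we have
\[
\log|\hat\nu(x)| - \log\Big(\prod_{j=1}^N g(b^{-j}x)\Big) = \sum_{j=N+1}^\infty \log g(b^{-j}x),
\]
so it suffices to bound this tail sum in absolute value by $C_0$. First I would note that every summand is non-positive: by \eqref{eq:g}, $g(y) = |\hat\mu_p(y)| \le \sum_{j=0}^{b-1} p_j = 1$ for all $y$, so $\log g(b^{-j}x) \le 0$ and hence the tail sum is $\le 0$.

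For the matching lower bound, the key observation is that for $j \ge N+1$ and $|x| \le b^N/2$ one has $|b^{-j}x| \le b^{N-j}/2 \le 1/(2b)$, so $b^{-j}x$ lies in the interval on which Item (3) of Lemma \ref{lm:f} gives the quadratic lower bound; in particular $g(b^{-j}x) \ge \exp(-C_0(b^{-j}x)^2) > 0$, so each logarithm is finite and $\log g(b^{-j}x) \ge -C_0 b^{-2j}x^2$. Summing the resulting geometric series and using $x^2 \le b^{2N}/4$ yields
\[
0 \ge \sum_{j=N+1}^\infty \log g(b^{-j}x) \ge -C_0 x^2 \sum_{j=N+1}^\infty b^{-2j} = -\frac{C_0\, x^2\, b^{-2N-2}}{1-b^{-2}} \ge -\frac{C_0}{4(b^2-1)} \ge -C_0,
\]
the last step using $b \ge 2$. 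Combining the two bounds gives the claim (with a substantial amount of room to spare).

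There is essentially no obstacle here: the argument is a routine tail estimate for the infinite product. The only point requiring any care is verifying that the arguments $b^{-j}x$ stay inside $[-1/(2b),1/(2b)]$, where the quadratic lower bound of Lemma \ref{lm:f}(3) holds — this is precisely why the hypothesis is $|x| \le b^N/2$ rather than a larger range, and the borderline index $j=N+1$ is fine because Lemma \ref{lm:f}(3) includes the endpoints of the interval. (One should also keep in mind that if $g$ vanishes at $b^{-j}x$ for some $j \le N$ then both sides of the displayed identity are $-\infty$ termwise, but the difference still makes sense as the tail sum $\sum_{j>N}\log g(b^{-j}x)$, which is what \eqref{AbsExact} provides and what the estimate above controls.)
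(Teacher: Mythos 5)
Your proof is correct and is essentially the paper's argument: both bound the tail $\prod_{j>N} g(b^{-j}x)$ from below by $\exp(-C_0\sum_{j>N} b^{-2j}x^2)\ge e^{-C_0}$ using Item (3) of Lemma \ref{lm:f} (valid since $|b^{-j}x|\le 1/(2b)$ for $j\ge N+1$), and from above by $1$ since $g\le 1$. Writing it as a tail sum of logarithms rather than a tail product is only a cosmetic difference.
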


\begin{proof}
	Note that, for all $j\ge N+1$, we have $b^{-j}x\in[-1/(2b),1/(2b)]$.
	By Item (3) of Lemma \ref{lm:f}, this gives us
	\begin{align*}
	\prod_{j=N+1}^{\infty} g(b^{-j} x) 
	\ge \exp(-C_0\sum_{j=N+1}^\infty b^{-2j}x^2)
	&\ge \exp(-C_0\sum_{j=1}^\infty b^{-2j}/4)\\
	&\ge\exp(-C_0).
	\end{align*}
	Now the claim follows from \eqref{AbsExact}.
\end{proof}

\begin{proof}[Proof of Proposition \ref{pr:limit-exists}]
	We first prove
	\begin{align*}
	\lim_{N\to\infty} &\frac{-\log\Big(\int_0^1 \prod_{j=0}^{N-1} g(b^j x) \d x\Big)}{\log b^N}\\
	&=\lim_{N\to\infty} \frac{-\log\Big(b^{-N}\sum_{a=0}^{b^N-1} \prod_{j=0}^{N-1} g(b^j (a/b^{N})) \Big)}{\log b^N}.
	\end{align*}	
	To this end, we note that
	\[
	\int_0^1 \prod_{j=0}^{N-1} g(b^j x) \d x=
	\int_0^1 b^{-N}\sum_{a=0}^{b^N-1} \prod_{j=0}^{N-1} g(b^j (x+a/b^{N})) \d x.
	\]
	By the intermediate value theorem, there is some $x_0\in \R/\Z$ such that
	\[
	\int_0^1 \prod_{j=0}^{N-1} g(b^j x) \d x
	=b^{-N}\sum_{a=0}^{b^N-1} \prod_{j=0}^{N-1} g(b^j (x_0+a/b^{N})).
	\]
	We apply Proposition \ref{pr:convergence} with $L=N$ and both with $x_1=0$, $x_2=x_0$
	and $x_1=x_0$, $x_2=0$.
	We get that
	\begin{align*}
	&\frac{\Big|\log\Big(\int_0^1 \prod_{j=0}^{N-1} g(b^j x) \d x\Big)-
	\log\Big(b^{-N}\sum_{a=0}^{b^N-1} \prod_{j=0}^{N-1} g(b^j (a/b^{N}))\Big)\Big|}{N}\\
	&\qquad\qquad\qquad\qquad\qquad\qquad< C/ N^{1/4}
	\end{align*}
	for some constant $C$.
	This proves the claim.
	
We observe that
\[
\sum_{a=0}^{b^N-1} \prod_{j=0}^{N-1} g(b^j (a/b^{N}))=
\sum_{a=0}^{b^N-1} \prod_{j=1}^{N} g(b^{-j} a)=
\sum_{a=-\lfloor b^N/2\rfloor}^{\lceil b^N/2\rceil-1} \prod_{j=1}^{N} g(b^{-j} a),
\]
where, for the second equality, we used that the function $\prod_{j=1}^N g(b^{-j} x)$
is $b^N$-periodic.
By Lemma \ref{lm:cutoff}, we now have
\[
\Bigg|\log\Bigg(
\sum_{a=0}^{b^N-1} \prod_{j=0}^{N-1} 
g(b^j (a/b^{N}))\Bigg)
-\log\Bigg(
\sum_{a=-\lfloor b^N/2\rfloor}^{\lceil b^N/2\rceil-1}|\hat\nu(a)|
\Bigg)\Bigg|
\le C_0.
\]
Using this and $|\hat\nu(-x)|=|\hat \nu(x)|$, we get
\[
\lim_{N\to\infty} \frac{-\log\Big(\sum_{a=0}^{b^N-1} \prod_{j=0}^{N-1} g(b^j (a/b^{N})) \Big)}
{\log b^N}
=\lim_{N\to\infty}
\frac{-\log\Big(\sum_{a=0}^{\lceil b^N/2\rceil-1}|\hat \nu(a)|\Big)}
{\log \lceil b^N/2\rceil}.
\]
	
For any $Q$, there is $N$ such that $ \lceil b^N/2\rceil\le Q\le
\lceil b^{N+1}/2\rceil$ and
\[
\sum_{a=0}^{\lceil b^N/2\rceil-1}|\hat \nu(a)|
\le \sum_{a=0}^{Q-1}|\hat \nu(a)|
\le\sum_{a=0}^{\lceil b^{N+1}/2\rceil-1}|\hat \nu(a)|.
\] 
	This gives
	\[
	\lim_{N\to\infty}
	\frac{-\log\Big(\lceil b^N/2\rceil^{-1}
		\sum_{a=0}^{\lceil b^N/2\rceil-1}|\hat \nu(a)|\Big)}
	{\log \lceil b^N/2\rceil}
	=\lim_{Q\to\infty}
	\frac{-\log\Big(Q^{-1}\sum_{a=0}^{Q-1}|\hat\nu(a)|\Big)}
	{\log Q}.
	\]
	The existence of this limit implies that it must equal $\hat\kappa_1(\nu)$, which
	completes the proof.
\end{proof}

\section{Analytic bounds}\label{sc:analytic}

The purpose of this section is to prove Proposition \ref{pr:kap1>12} for $b\ge 111$
and Proposition \ref{pr:kap1kap>12} for $b\ge 112$.
We also prove Theorem \ref{thm: l1 bound for special arrangement}.

To these ends, we prove a simple analytic bound for some exponential sums in the
next lemma.
The results will follow from this using Theorem \ref{th:bounds}.

\begin{lem}\label{lm:expsum}
Let $3\le l \le b$ and $a$ be integers, and let $d\in\Z_{\ge 1}$ be coprime to $b$.
Then
\[
\sum_{j=0}^{b-1}\Big|\sum_{k=0}^{l-1}e((x+j/b)(dk+a))\Big|\le b(1+\log(2l)) + 3l +2
\]
for all $x\in\R$.
\end{lem}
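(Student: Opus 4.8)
The plan is to reduce the inner geometric sum to a closed form and then sum the resulting bound over $j$, splitting the range of $j$ according to how close the relevant point gets to a zero of the denominator. Write $\theta_j = (x+j/b)d$ and $\phi_j = (x+j/b)a$. Since the $k$-sum is geometric,
\[
\Bigl|\sum_{k=0}^{l-1} e\bigl((x+j/b)(dk+a)\bigr)\Bigr|
= \Bigl|\sum_{k=0}^{l-1} e(\theta_j k)\Bigr|
= \frac{|\sin(\pi l \theta_j)|}{|\sin(\pi \theta_j)|},
\]
with the convention that the value is $l$ when $\theta_j \in \bZ$. The factor $e(\phi_j)$ has modulus $1$ and drops out, so $a$ plays no role and we may assume $a = 0$. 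We also have the trivial bound $|\sin(\pi l \theta_j)/\sin(\pi\theta_j)| \le \min\{l,\ (2\|\theta_j\|)^{-1}\}$, using $|\sin(\pi t)| \ge 2\|t\|$.

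Next I would analyse the multiset of values $\{\theta_j \bmod 1 : j = 0,\ldots,b-1\} = \{xd + jd/b \bmod 1\}$. Since $\gcd(d,b)=1$, as $j$ runs over a complete residue system mod $b$ so does $jd$, hence the points $\theta_j \bmod 1$ are exactly the $b$ points $\{\beta + j/b : j=0,\ldots,b-1\}$ for $\beta = xd \bmod 1/b$ — i.e. an arithmetic progression of step $1/b$ covering each coset of $\tfrac1b\bZ/\bZ$ once. So after reindexing it suffices to bound $\sum_{j=0}^{b-1} \min\{l, (2\|\beta + j/b\|)^{-1}\}$ uniformly in $\beta \in [0,1/b)$. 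Among these $b$ equally spaced points, at most one lies in $[-1/(2l),1/(2l)] \bmod 1$ provided $l \le b$ (since the spacing is $1/b \ge 1/l$... actually one must be a little careful: the window has length $1/l \le 1/b$ so at most one point can land in it, but possibly none). For that at most one point we use the bound $l$; for the remaining points, order them by distance to $0$: the $m$-th closest (on either side) has $\|\beta + j/b\| \ge (m-1)/b$ roughly, more precisely there are at most two points with $\|\cdot\| \in [(m-1)/b, m/b)$ for each $m \ge 1$, so
\[
\sum_{j:\ \|\beta+j/b\|\ge 1/(2l)} \frac{1}{2\|\beta+j/b\|}
\le \sum_{m\ge 1,\ m/b \ge 1/(2l)} \frac{2}{2(m-1)/b} \cdot [\text{boundary terms}]
\le b\Bigl(1 + \sum_{m=1}^{\lfloor b/2\rfloor} \frac1m\Bigr) + O(l),
\]
and $\sum_{m\le b/2} 1/m \le \log(b/2) + 1$. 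One then bundles the $\log$ into the stated $\log(2l)$ — here the condition $l \le b$ is used, since $\log(b/2) \le \log(b) \le \log(2b)$ and one wants the weaker, $l$-dependent bound $b(1+\log(2l))$ to dominate; I would double-check whether one actually gets $\log b$ or $\log l$ and adjust the constants $3l+2$ accordingly to absorb the endpoint/small-$m$ discrepancies.

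The main obstacle is the bookkeeping at the "near-zero" end: getting the constants $3l+2$ exactly right requires carefully handling the point(s) closest to $0$ (where one must not use $1/(2\|\cdot\|)$ but rather the bound $l$, and there could be one on each side if $\beta$ is near $0$ or near $1/b$), the fact that the crossover between the two bounds happens at $\|\theta_j\| \asymp 1/l$ rather than exactly at a lattice point, and the doubling from counting points at distance $[(m-1)/b, m/b)$ on both sides. I expect the clean way is: let $j_0$ be the index minimising $\|\theta_j\|$; bound that term by $l$; for $j \ne j_0$ note $\|\theta_j\| \ge \tfrac12\cdot\tfrac1b\cdot(\text{rank})$ and that ranks $1,2,\ldots$ each occur at most twice, giving $\sum_{j\ne j_0} (2\|\theta_j\|)^{-1} \le b\sum_{r\ge1} 1/r$ truncated appropriately; and finally observe the truncation is at $r \approx b/l$ coming from the constraint $\|\theta_j\| \le 1/2$, not at $b/2$ — wait, that would give $\log(b/l)$, so in fact one should keep the sharper $\min$ and the $\log(2l)$ likely emerges from summing $1/r$ only over $r$ up to $\approx b/(2l)\cdot(\text{something})$; I would reconcile this discrepancy before finalising, as it is the one genuinely delicate point, but the upshot — a bound of the shape $b\log l + O(b) + O(l)$ — is robust.
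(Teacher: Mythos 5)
Your overall strategy --- factoring out $e((x+j/b)a)$ so that $a$ plays no role, writing the inner geometric sum in closed form, using the two bounds $f(t)\le l$ and $f(t)\le 1/(2\|t\|)$, and exploiting $\gcd(d,b)=1$ to see that the points $xd+jd/b$ run over a full arithmetic progression of step $1/b$ --- is exactly the paper's. But the execution has two genuine problems, both located at the step you yourself flag as unresolved, so the argument as written does not prove the lemma.

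First, your claim that at most one point of the progression lies in the window $[-1/(2l),1/(2l)]$ is backwards: since $l\le b$, the window has length $1/l\ge 1/b$, i.e.\ at least the spacing, so it can contain on the order of $b/l$ points. Each such point must be bounded by $l$, and their total contribution is about $b+l$ (this is precisely the term $l(b/l+1)$ in the paper's proof). Second, and more seriously, the harmonic-sum bookkeeping is never reconciled with the claimed bound. Summing $1/r$ over ranks $1\le r\le b/2$ gives $b\log(b/2)$, which exceeds $b(1+\log(2l))$ whenever $l$ is much smaller than $b$, so the bound you actually compute is too weak; and your closing speculation about where the truncation occurs has it the wrong way round. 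The correct accounting is that ranks $r\lesssim b/(2l)$ correspond to $\|\theta_j\|\le 1/(2l)$ and are already covered by the bound $l$, so the bound $1/(2\|\theta_j\|)$ is applied only for $b/(2l)\lesssim r\lesssim b/2$, and the resulting sum is $\approx b\log l$. The paper implements this by comparing with the integral $b\int_{1/(2l)}^{1/6}\frac{dt}{2t}=\frac b2(\log(2l)-\log 6)$ on each side of $0$, bounding $f\le 2$ on $[1/6,5/6]$ (contributing at most $2(2b/3+1)$), and using $4/3<\log 6$ to absorb the middle range; that is also where the exact constants $b(1+\log(2l))+3l+2$ come from. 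As it stands, you have identified the delicate point but not closed it, so the proof is incomplete.
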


\begin{proof}
We assume, as we may, that $a=0$, for a factor of $e((x+j/b)a)$ can be pulled out
of the inner sum, which does not affect its absolute value. Note that
\[
\sum_{j=0}^{b-1}\Big|
\sum_{k=0}^{l-1}
e((x+j/b)dk)\Big|
=\sum_{j=0}^{b-1}
f(xd+jd/b)
=\sum_{j=0}^{b-1}
f(xd+j/b),
\]
where
\[
f(t)=\Big|\sum_{k=0}^{l-1}
e(tk)\Big|=\Big|
\frac{e(tl)-1}{e(t)-1}\Big|.
\]
Here we have used that $d$ is coprime to $b$.

We record a few estimates for the function $f$.
First, for all $t$ we have $0 \le f(t) \le l$.
Second, using the estimates
\[
2\ge |e(y)-1|=2\sin(\pi\|y\|)\ge 4\|y\|,
\]
where $\|y\|$ is the distance of $y$ to the nearest integer, we have
\[
f(t) \le \frac{1}{2\|t\|}
\]
for all $t$.
Third, we have $|e(t)-1|\ge 1$ for $t\in[1/6,5/6]$, which gives us
\[
f(t) \le 2
\]
for all such $t$.

In what follows, we assume as we may that $xd\in[0,1/b)$.
Note that
\[
\sum_{j=0}^{b-1}
f(xd+j/b)
\one(xd+j/b\in[0,1/2l)\cup[1-1/2l,1))
\le l(b/l+1).
\]
In addition,
\begin{align*}
\sum_{j=0}^{b-1}
f(xd+j/b)
\one(xd+j/b\in[1/2l,1/6))
&\le l+b\int_{1/2l}^{1/6}\frac{1}{2t}dt\\
&= l + \frac{b}{2}(\log(2l)-\log 6),
\end{align*}
and similarly
\[
\sum_{j=0}^{b-1}
f(xd+j/b)
\one(xd+j/b\in[5/6,1-1/2l))
\le l + \frac{b}{2}(\log(2l)-\log 6).
\]
Finally,
\[
\sum_{j=0}^{b-1}
f(xd+j/b)
\one(xd+j/b\in[1/6,5/6))
\le 2(2b/3+1).
\]

Putting together our estimates and using  that $4/3<\log 6$, we get
\[
\sum_{j=0}^{b-1}
f(xd+j/b)
\le b(1+\log(2l)) + 3l +2,
\]
as required.
\end{proof}

\begin{lem}
Let $b \in\Z_{\ge 3}$, and suppose $D \subset
\{0,1,\ldots,b-1\}$ 
with
$\#D=b-1$.
Then
\[
\hat\kappa_1(\nu_{b,D})\ge\frac{\log (b-1)}{\log b}-\frac{\log(5+\log(2b)+2/b)}{\log b}.
\]
\end{lem}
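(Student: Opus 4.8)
The plan is to invoke the lower bound of Theorem~\ref{th:bounds} at the smallest scale $L=1$, where $S_1=g$; it then suffices to produce a uniform-in-$x$ upper bound for $b^{-1}\sum_{i=0}^{b-1} g(x+i/b)$. Since $\#D=b-1$, there is a unique missing digit $m\in\{0,\ldots,b-1\}$, and $\nu_{b,D}$ is the missing-digit measure attached to the uniform probability vector $p$ on $D$ (here $p\ne(b^{-1},\ldots,b^{-1})$ because $\#D=b-1<b$, so Theorem~\ref{th:bounds} applies). Hence
\[
g(x)=\frac{1}{b-1}\Big|\sum_{j\in D} e(jx)\Big|=\frac{1}{b-1}\Big|\sum_{j=0}^{b-1} e(jx)-e(mx)\Big|.
\]

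Next I would apply the triangle inequality together with $|e(m(x+i/b))|\le1$ to obtain
\[
\sum_{i=0}^{b-1} g(x+i/b)\le\frac{1}{b-1}\Big(\sum_{i=0}^{b-1}\Big|\sum_{j=0}^{b-1} e\big(j(x+i/b)\big)\Big|+b\Big).
\]
The remaining exponential sum is precisely the quantity estimated in Lemma~\ref{lm:expsum} with $l=b$, $d=1$, $a=0$ (admissible since $b\ge3$ and $\gcd(1,b)=1$), which gives the bound $b(1+\log(2b))+3b+2=b(4+\log(2b))+2$. Therefore, uniformly in $x$,
\[
b^{-1}\sum_{i=0}^{b-1} g(x+i/b)\le\frac{b(5+\log(2b))+2}{b(b-1)}=\frac{1}{b-1}\Big(5+\log(2b)+\frac2b\Big).
\]

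Feeding this into the lower bound of Theorem~\ref{th:bounds} yields
\[
\hat\kappa_1(\nu_{b,D})\ge\frac{-\log\big((b-1)^{-1}(5+\log(2b)+2/b)\big)}{\log b}=\frac{\log(b-1)}{\log b}-\frac{\log(5+\log(2b)+2/b)}{\log b},
\]
which is the claimed inequality. There is no genuine obstacle here: the content is entirely carried by Lemma~\ref{lm:expsum} and Theorem~\ref{th:bounds}, so the only care needed is in checking their hypotheses (especially that $p$ is not the uniform vector on all of $\{0,\ldots,b-1\}$) and in the elementary rearrangement of the estimates. If one wished to avoid citing Lemma~\ref{lm:expsum}, the sum $\sum_{i}\big|\sum_{j} e(j(x+i/b))\big|$ could be estimated directly using $\big|\sum_{j=0}^{b-1}e(j(x+i/b))\big|=|e(bx)-1|/|e(x+i/b)-1|$ and a comparison of $\sum_{i}1/\|x+i/b\|$ with an integral, but reusing the lemma is cleaner and keeps the constants consistent with the rest of the section.
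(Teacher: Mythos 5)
Your proposal is correct and follows essentially the same route as the paper: take $L=1$ in Theorem~\ref{th:bounds}, bound $\sum_{i}\bigl|\sum_{k\in D}e((x+i/b)k)\bigr|$ by the full digit sum plus $b$ via the triangle inequality, and invoke Lemma~\ref{lm:expsum} with $a=0$, $d=1$, $l=b$. The constants and the final rearrangement match the paper's computation exactly.
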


A simple numerical calculation shows that the lemma implies Proposition \ref{pr:kap1>12}
for $b\ge 111$ and Proposition \ref{pr:kap1kap>12} for $b\ge112$.

\begin{proof}
Taking $L=1$ in Theorem \ref{th:bounds}, we have
\begin{equation}\label{eq:recall-lower}
\hat\kappa_1(\nu)\ge \frac{-\log\max_xb^{-1}\sum_{i=0}^{b-1}g(x+i/b)}{\log b}.
\end{equation}
We write
\[
\sum_{i=0}^{b-1}(b-1)g(x+i/b)=\sum_{i=0}^{b-1}\Big|\sum_{k\in D}e((x+i/b)k)\Big|
\le\sum_{i=0}^{b-1}\Big|\sum_{k=0}^{b-1}e((x+i/b)k)\Big| + b.
\]
Using Lemma \ref{lm:expsum} with $a=0$, $d=1$ and $l=b$, we get
\[
\sum_{i=0}^{b-1}(b-1)g(x+i/b)\le b(5+\log(2b))+2.
\]
We plug this into \eqref{eq:recall-lower} and get the claim of the lemma.
\end{proof}

\begin{proof}[Proof of Theorem \ref{thm: l1 bound for special arrangement}]
The upper bound for $\hat \kappa_1(\nu)$ is clear because 
\[
\hat\kappa_1(\nu)\leq \hat\kappa_2(\nu) = 
\dimH(\nu).
\]

For the lower bound, we use Lemma \ref{lm:expsum} and get
\[
\sum_{i=0}^{b-1}lg(x+i/b)\le b(1+\log(2l))+3l+2.
\]
We plug this into \eqref{eq:recall-lower}, and get
\[
\hat\kappa_1(\nu)\ge\frac{\log l}{\log b}- \frac{\log(1+\log(2l)+(3l+2)/b)}{\log b}.
\]
This proves the claim, since $\dimH\nu=\log l/\log b$ and $3l+2<3b$.
\end{proof}

\section{Numerical estimates}\label{sc:numeric}

The purpose of this section is to prove Propositions \ref{pr:kap1>12} and
\ref{pr:kap1kap>12} in the remaining cases of $b\le 111$.
To this end, we estimate $\hat\kap_1(\nu)$ numerically using Theorem \ref{th:bounds}. We compute
\begin{equation}\label{eq:FL}
F_L(x):=
\sum_{i=0}^{b-1}S_L(x+i/b^L)=\sum_{i=0}^{b-1}\prod_{j=0}^{L-1}g(b^j(x+i/b^L))
\end{equation}
along a sufficiently dense arithmetic progression, and bound the Lipschitz constant to understand the behaviour between these points. The inequality
\[
|e(\tet) - 1| \le 2 \pi |\tet|
\]
assures us that $\Lip(g) \le 2\pi(b-1)$.
As $\|g\|_\infty \le 1$, it then follows from the telescoping identity
\[
a_1 \cdots a_s - b_1 \cdots b_s = \sum_{i \le s} (a_i - b_i)
\prod_{j < i} a_j \prod_{j > i} b_j
\]
that
\begin{equation} \label{Lipschitz}
	\Lip(F_L) \le b^L\Lip(S_L)
	\le b^L  \sum_{j=0}^{L-1} 2\pi(b-1)\cdot b^j = 2\pi b^L (b^L - 1).
\end{equation}

For $\delta>0$, denote by $A_{\delta,L}$ the arithmetic progression
$k\delta$ for $k=0,\ldots,\lceil (2b^L\delta)^{-1}\rceil$.
If we show that
\begin{equation}\label{eq:numeric}
\max_{x\in A_{\delta,L}} F_L(x) < b^{(1-\tau)L} - \delta b^L (b^L - 1)\pi,
\end{equation}
For some $\tau$, $\delta$ and $L$, then Theorem \ref{th:bounds} and the above Lipschitz estimate imply that 
$\hat \kap_1(\nu)
\ge \tau$.

\bigskip

Using the software \emph{SageMath}~\cite{Sage}, we verified \eqref{eq:numeric}
for the missing-digit measures $\nu=\nu_{b,\{0,\ldots,b-1\}\backslash\{a\}}$
for $b=4$ and $a=0$, and for 
\[
b\in\{5,6\},
\qquad
a
\in
\{0,\ldots,\lfloor(b-1)/2\rfloor\}
\]
with $\tau=1/2$, $L=2$ and $\delta=10^{-5}$, except that
for $b=5$, $a=1$, we chose $L=4$ and $\delta=5\cdot10^{-7}$.
We note that the missing-digit measure
$\nu_{b,\{0,\ldots,b-1\}\backslash\{b-1-a\}}$
is the image of 
$\nu_{b,\{0,\ldots,b-1\}\backslash\{a\}}$
under the map 
$x \mapsto 1-x$, 
which does not change Fourier $\ell^1$
dimension.
For this reason, it is enough to do the calculations for 
$a\in\{0,\ldots,\lfloor(b-1)/2\rfloor\}$.
This proves Proposition \ref{pr:kap1>12} for $b\le 6$.

\begin{remark}
We note that $\hat\kap_1(\nu)<1/2$ for the remaining choices of parameters,
namely
\begin{equation} \label{exceptional}
	(b,a) \in \{ (3,0),(3,1),(3,2), (4,1), (4,2)\}.
\end{equation} 
To show this, it is enough to verify 
\begin{equation}\label{eq:numeric2}
	\min_{x\in A_{\delta,L}} F_L(x) > b^{(1-\tau)L} + \delta\cdot 2b^L (b^L - 1)\pi,
\end{equation}
for some choices of $L$ and $\delta$ and $\tau=1/2$, due to the upper bound
for $\hat\kappa_1(\nu)$ in Theorem \ref{th:bounds}.
We carried this out with $L=2$ and $\del = (10)^{-4}$ in all cases given by \eqref{exceptional}.
\end{remark}

Again using the software \emph{SageMath} \cite{Sage}, we verified \eqref{eq:numeric}
with 
\[
\tau = \log b/(2\log(b-1))
\]
in the cases $b\in\{7,8\}$,
$a\in\{0,\ldots,\lfloor(b-1)/2\rfloor\}$ with $L=2$,
$\delta=10^{-5}$ and in the cases $b\in\{9,\ldots,111\}$,
$a\in\{0,\ldots,\lfloor(b-1)/2\rfloor\}$ with $L=1$,
$\delta=10^{-4}$.
This proves both Propositions \ref{pr:kap1>12} and \ref{pr:kap1kap>12}
in the cases $b=7,\ldots,111$.
The proofs of both propositions are now complete.

In the appendix, we provide all of the SageMath code used to carry out the calculations in this section.

\appendix

\section{Code}

Here we provide the SageMath code we used to carry out
the numerical calculations in the paper.
The program uses interval arithmetic, which means that instead
of storing a single floating point number for a variable,
it stores two, a rigorous upper and a rigorous lower bound.

\medskip

\begin{Verbatim}[fontsize=\tiny]
sage: reset()
sage: sage.rings.real_mpfi.printing_style = 'brackets' 
                    # Prints rigorous lower and upper bounds

sage: pi_=RIF(pi)   # Converts pi to an interval
sage: I_= CIF(I)    # Converts I to a complex box


sage: def e_ (x): # Computes the function e(x) defnied in (2.1)
....: 	return exp(RIF(2)*I_*pi_*x)

sage: def g (a,b,x): # Computes the function g(x) defined
                     # in (4.1) for the natural measure on
                     # K_b,D with D={0,...,b-1}\{a}

                     # First, we compute the sum e_(jx) for j=0..b-1

....: 	if (abs(e_(x)-RIF(1))+RIF(-10^(-10),10^(-10))).contains_zero() :
                     # If x is close to an integer, we just compute the sum

....: 		Su=RIF(0)
....:		for j in [0..b-1]:
....: 			Su=Su+e_(RIF(j)*x)
....: 	else:        # Otherwise, we use the closed formula

....: 		Su=(e_(RIF(b)*x)-RIF(1))/(e_(x)-RIF(1))
....: 	return abs((Su-e_(RIF(a)*x))/RIF(b-1))

sage: def S (a,b,L,x): # Computes the function S_L(x)
                       # defined in (4.3)
....:	Pr=RIF(1)
....: 	for j in [0..L-1]:
....: 		Pr=Pr*g(a,b,RIF(b^j)*x)
....:	return Pr

sage: def F (a,b,L,x): # Computes the function F_L(x)
                       # defined in (6.1)
....: 	Su = RIF(0)
....: 	for j in [0..b^L-1] :
....: 		Su = Su+S(a,b,L,x+RIF(j/b^L))
....: 	return Su

sage: def TEST_6_3 (a,b,L,delta,tau): # Returns a positive value
                                      # if (6.3) holds.
....: 	max_ =RIF(0)
....: 	limit=ceil((1/(RIF(2*b^L)*delta)).upper())
....: 	for k in [0..limit]:
....: 		FF=F(a,b,L,RIF(k)*delta)
....: 		if max_ <  FF:
....: 			max_ = FF
....:	return RIF(b)^((RIF(1)-tau)*RIF(L))-delta*RIF(b^L*(b^L-1))*pi_-max_

sage: def TEST_6_5 (a,b,L,delta,tau): # Returns a negative value
                                      # if (6.5) holds.
....: 	min_ =RIF(b^L)
....: 	limit=ceil((1/(RIF(2*b^L)*delta)).upper())
....: 	for k in [0..limit]:
....: 		FF=F(a,b,L,RIF(k)*delta)
....: 		if min_ <  FF:
....:			min_ = FF
....: 	return RIF(b)^((RIF(1)-tau)*RIF(L))-delta*RIF(b^L*(b^L-1))*pi_-min_


# Proving Proposition 2.4 for b<=6

sage: L=2
sage: delta=RIF(10^(-5))
sage: tau=RIF(1/2)

sage: for [b,a] in [[4,0], [5,0], [5,2], [6,0], [6,1], [6,2]] :
....: print("b=", b,"a=", a,":",TEST_6_3(a,b,L,delta,tau))

sage: L=4
sage: delta=RIF(5*10^(-7))
sage: tau=RIF(1/2)
sage: b=5
sage: a=1

sage: print("b=", b,"a=", a,":",TEST_6_3(a,b,L,delta,tau))

# Proving (6.5) for the parameters in (6.4)

sage: L=2
sage: delta=RIF(10^(-4))
sage: tau=RIF(1/2)

sage: for [b,a] in [[3,0],[3,1],[4,1]] :
....: 	print("b=", b,"a=", a,":",TEST_6_5(a,b,L,delta,tau))

sage: def Tau (b): # Computes the value of tau needed to prove Proposition 2.5
....:	return log(RIF(b))/(RIF(2)*log(RIF(b-1))) 

# Proving Proposition 2.5 and the remaining cases of Proposition 2.4

sage: L=2
sage: delta=RIF(10^(-5))

sage: for b in [7,8] :
....: 	for a in [0..floor((b-1)/2)] : 
....:		print("b=", b,"a=", a,":",TEST_6_3(a,b,L,delta,Tau(b)))

sage: L=1
sage: delta=RIF(10^(-4))

sage: for b in [9..111] :
....: 	for a in [0..floor((b-1)/2)] : 
....: 		print("b=", b,"a=", a,":",TEST_6_3(a,b,L,delta,Tau(b)))
	
\end{Verbatim}

\providecommand{\bysame}{\leavevmode\hbox to3em{\hrulefill}\thinspace}


\begin{thebibliography}{50}

\bibitem{Bak2021} S. Baker, \emph{Intrinsic Diophantine Approximation for overlapping iterated
function systems}, Math. Ann. \textbf{388} (2024), 3259--3297.

\bibitem{BFR2011} R. Broderick, L. Fishman and A. Reich, \emph{Intrinsic approximation on Cantor-like sets, a problem of Mahler}, Mosc. J. Comb. Number Theory \textbf{1} (2011), 291--300.

\bibitem{BD2016} Y. Bugeaud and A. Durand, \emph{Metric Diophantine approximation on the middle-third Cantor set}, J. Eur. Math. Soc. \textbf{18} (2016), 1233--1272.

\bibitem{CCDN2020}
W. Castryck, R. Cluckers, P. Dittmann and K. H. Nguyen, \emph{The dimension growth conjecture, polynomial in the degree and without logarithmic factors}, Algebra \& Number Theory \textbf{14} (2020), 2261--2294.

\bibitem{FS2014} L. Fishman and D. Simmons, \emph{Intrinsic approximation for fractals defined by rational iterated
function systems: Mahler's research suggestion}, Proc. Lond. Math. Soc. (3) \textbf{109} (2014), 189--212.

\bibitem{Gra2014}
L. Grafakos, \emph{Classical Fourier analysis}, third edition, Graduate Texts in Mathematics \textbf{249,} Springer, New York, 2014.

\bibitem{KL20} 
O. Khalil and M. L\"{u}thi, \emph{Random Walks, Spectral Gaps, and Khintchine's Theorem on Fractals}, 
Invent. math. 
\textbf{232} (2023), 
713--831.

\bibitem{LSV2007} J. Levesley, C. Salp and S. Velani,
\emph{On a problem of K. Mahler: Diophantine approximation and Cantor sets}, Math. Ann. \textbf{338} (2007), 97--118.

\bibitem{Mattila} 
P. Mattila, \emph{Fourier Analysis and Hausdorff Dimension}, Cambridge University Press, 2015.

\iffalse
\bibitem{May2019} 
J. Maynard, \emph{Primes with restricted digits}, Invent. Math. \textbf{217} (2019), 127--218.
\fi

\bibitem{RSTW2020} A. Rahm, N. Solomon, T. Trauthwein, B. Weiss, \emph{The distribution of rational numbers on Cantor's middle thirds set}, Unif. Distrib. Theory \textbf{15} (2020), 73--92.

\iffalse
w\bibitem{S07} P. Salberger, \emph{On the density of rational and integral points on algebraic varieties}, J. Reine Angew. Math. \textbf{606} (2007), 123--147.
\fi

\bibitem{Sch2020}
J. Schleischitz, \emph{On intrinsic and extrinsic rational approximation to Cantor sets}, Ergodic Theory Dynam. Systems \textbf{41} (2021), 1560--1589.

\iffalse
\bibitem{Serre} J.-P. Serre, \emph{Lectures on the Mordell-Weil theorem, Aspects of Mathematics}, E15, Friedr. Vieweg $\&$ Sohn, Braunschweig, 1989, Translated from the French and edited by Martin Brown from notes by Michel Waldschmidt.

\bibitem{SimmonsWeiss}
D. Simmons and B. Weiss, \emph{Random walks on homogeneous spaces and Diophantine approximation on fractals}, Invent. Math. \textbf{216} (2019), 337--394.
\fi

\bibitem{TWW}
B. Tan, B. Wang and J. Wu, \emph{Mahler’s question for intrinsic diophantine approximation on triadic Cantor set: the divergence theory}, Math. Z. \textbf{306,} 2 (2024).

\bibitem{Weiss2001} B. Weiss, \emph{Almost no points on a Cantor set are very well approximable}, R. Soc. Lond. Proc. Ser. A Math. Phys. Eng. Sci. \textbf{457} (2001), 949--952.



\bibitem{Yu2021} H. Yu, \emph{Rational points near self-similar sets}, arXiv:2101.05910.

\bibitem{Sage}
P. Zimmermann,
A. Casamayou,
N. Cohen,
G. Connan,
T. Dumont,
L. Fousse,
F. Maltey,
M. Meulien,
M. Mezzarobba,
C. Pernet,
N. M. Thi\'{e}ry,
E. Bray,
J. Cremona,
M. Forets,
A. Ghitza,
H. Thomas,
\emph{Computational mathematics with SageMath},
Society for Industrial and Applied Mathematics (SIAM), Philadelphia, PA,
2018,
pp xiv+464.


\end{thebibliography}
\end{document}